\newtheorem{lemma}{Lemma}
\newtheorem{remark}{Remark}
\newtheorem{corollary}{Corollary}
\newtheorem{proposition}{Proposition}
\DeclareMathOperator{\iid}{\stackrel{iid}{\sim}}
\DeclareMathOperator{\st}{\text{ s.t. }}
\DeclareMathOperator{\period}{\text{.}}
\DeclareMathOperator{\comma}{\text{, }}
\DeclareMathOperator{\since}{\text{since }}
\DeclareMathOperator{\vs}{\text{ versus }}
\DeclareMathOperator{\where}{\text{ where }}
\DeclareMathOperator{\by}{\text{by }}
\DeclareMathOperator{\textif}{\text{if }}
\DeclareMathOperator{\otherwise}{\text{otherwise }}
\DeclareMathOperator{\textand}{\text{ and }}
\DeclareMathOperator{\textor}{\text{ or }}
\DeclareMathOperator{\for}{\text{ for }}
\DeclareMathOperator{\wpa}{\text{ with prob. }}
\DeclareMathOperator{\supp}{supp}
\DeclareMathOperator*{\argmin}{arg\,min}
\DeclareMathOperator*{\argmax}{arg\,max}
\newcommand{\norm}[1]{\left\lVert #1 \right\rVert}
\DeclareMathOperator{\bin}{Bin}
\DeclareMathOperator{\Bin}{\bin}
\DeclareMathOperator{\N}{\mathbb{N}}
\DeclareMathOperator{\Lip}{Lip}
\DeclareMathOperator{\toD}{\overset{d}{\to}}
\let\tilde\widetilde
\let\hat\widehat
\begin{document}

\begin{center}

{\bf{\Large{Testing Random Effects for Binomial Data 
  }}}

\vspace*{.2in}

{{
\begin{tabular}{ccc}
Lucas Kania$^{\dagger}$  
& Larry Wasserman$^{\dagger, \ddagger}$
& Sivaraman Balakrishnan$^{\dagger,\ddagger}$ 
\end{tabular}
}}

\vspace{.15in}

\begin{tabular}{c}
	$^\dagger$Department of Statistics 
    and Data Science, Carnegie Mellon University\\
	$^\ddagger$Machine Learning Department, Carnegie Mellon University \\[0.12in]
\end{tabular}
\begin{tabular}{cc}
    \texttt{lucaskania@cmu.edu}, 
     \texttt{\{larry,siva\}@stat.cmu.edu} 
\end{tabular}

\vspace{.15in}

\today

\end{center}

\begin{abstract}
In modern scientific research, small-scale studies with limited participants are increasingly common. However, interpreting individual outcomes can be challenging, making it standard practice to combine data across studies using random effects to draw broader scientific conclusions. In this work, we introduce an optimal methodology for assessing the goodness-of-fit of a reference distribution for the random effects arising from binomial counts. For meta-analyses, we also derive optimal tests to evaluate whether multiple studies are in agreement before pooling the data. In all cases, we prove that the proposed tests optimally distinguish null and alternative hypotheses separated in the 1-Wasserstein distance.
\end{abstract}

\etocdepthtag.toc{mtchapter}
\etocsettagdepth{mtchapter}{section}
\etocsettagdepth{mtappendix}{none}
\etocsettagdepth{mtreferences}{section}
{
\normalsize
\parskip=0em
\renewcommand{\contentsname}{\normalsize Table of contents}
\tableofcontents
}

\pagebreak

\section{Introduction}

In performance evaluation, \citet{lordStrongTruescoreTheory1965,lordEmpiricalBayesProcedure1975} and \citet{grilliBinomialMixtureModeling2015} study the problem of estimating an individual's \textit{true} performance in a task based on a limited number of evaluations. They consider a scenario where $n$ individuals answer $t$ questions, and the number of correct responses is recorded. When the number of questions is small compared to the number of participants, making meaningful inferences about any single participant becomes challenging. However, aggregating data across all participants enables drawing conclusions about the population.

To facilitate such data pooling, the authors propose a binomial mixture model, where the probability of a correct response for each individual is drawn from an underlying distribution \begin{equation}\label{eq:binomial_mixture}
    X_i \mid p_i \sim \text{Bin}(t, p_i), \quad p_i \sim \pi, \quad 1 \leq i \leq n,
\end{equation} where $X_i$ represents the observed scores, $p_i$ are the \textit{true} scores of the candidates, and $\pi$ is the \textit{mixing distribution}, which captures the variability in individual abilities. The research primarily focuses on analyzing the properties of the mixing distribution, which may belong to a parametric or nonparametric family. For instance, while \citet{lordEstimatingTruescoreDistributions1969} allowed $\pi$ to be any distribution supported on $[0,1]$, \citet{thomasBinomialMixtureModel1989} and \citet{grilliBinomialMixtureModeling2015} restricted their analysis to finite binomial mixtures to better account for known individual differences in task performance.

Binomial mixtures also play an important role in other fields. They are used to account for variation in mouse mortality rates \citep{brooksFiniteMixtureModels1997}, word frequencies \citep{loweBetabinominalMixtureModel1999}, welfare program participation \citep{melkerssonWelfareParticipationWelfare2004}, genetic heterogeneity \citep{zhouBinomialMixtureModelbased2009}, genomic dependencies \citep{snipenMicrobialComparativePangenomics2009,hoggCharacterizationModelingHaemophilus2007}, and RNA composition \citep{jurgesDissectingNewlyTranscribed2018,linWellTEMPseqMicrowellbasedStrategy2023}. In all cases, assessing the compatibility between a reference mixing distribution and the data is crucial.

A central statistical question is how accurately the mixing distribution can be recovered. The main challenge is that a binomial mixture preserves only the first $t$ moments of the underlying distribution. Consequently, any two mixing distributions sharing these moments produce statistically indistinguishable observations. Research on this problem falls into two categories: methods that reliably recover the mixing distribution but require strong conditions on the number of trials and methods that avoid such conditions but offer weaker statistical guarantees.

\citet{teicherIdentifiabilityFiniteMixtures1963} studied the identifiability of finite mixing distributions. Building on his work, \citet{dedeckerMinimaxRatesConvergence2013,dedeckerImprovedRatesWasserstein2015,nguyenConvergenceLatentMixing2013,hoStrongIdentifiabilityConvergence2016} and \citet{heinrichStrongIdentifiabilityOptimal2018} analyzed the convergence of the method of moments (MOM) and the maximum likelihood estimator (MLE) under strong identifiability conditions, which require smoothness assumptions on the mixing distribution. In particular, \citet{manoleEstimatingNumberComponents2021c} investigated binomial mixtures under these conditions. More generally, \citet{yeBinomialMixtureModel2021a,tian2017} and \citet{vinayakMaximumLikelihoodEstimation2019} study the estimation of the mixing distribution without requiring identifiability conditions using the plug-in, MOM, and MLE estimators. Their findings indicate that, without stronger assumptions, standard estimators of the mixing distribution may be unreliable when the number of trials is much smaller than the number of studies.

In this work, we focus on testing rather than estimation. Instead of approximating the mixing distribution, we assess its proximity to a reference distribution. To compare them, we use the 1-Wasserstein distance, denoted by $W_1$ and defined in \zcref[S]{sec:problem}, since it does not impose restrictions on the support of the compared distributions.

We study goodness-of-fit testing, also known as the identity testing problem, under the binomial mixture model. Given an arbitrary reference mixing distribution $\pi_0$, called the null distribution, we aim to test whether the mixing distribution underlying the observed data in \eqref{eq:binomial_mixture} equals $\pi_0$ or deviates significantly from it as measured by the 1-Wasserstein distance: \begin{equation}
H_0: \pi = \pi_0 \vs H_1: W_1(\pi,\pi_0) \geq \epsilon \period
\end{equation}The parameter $\epsilon$ represents the separation between the hypotheses. When $\epsilon=0$, the hypotheses overlap, making them indistinguishable. Our goal is to determine the smallest $\epsilon$ for which a test can successfully differentiate the hypotheses while controlling the probability of making a mistake. To characterize the minimum separation, we adopt the non-parametric minimax framework for hypothesis testing, which can be traced back to the foundational work of \citet{mannChoiceNumberClass1942}, \citet{ingsterMinimaxNonparametricDetection1982,ingsterAsymptoticallyMinimaxHypothesisI1993}, \citet{ermakovAsymptoticallyMinimaxTests1990,ermakovMinimaxDetectionSignal1991}, and \citet{lepskiMinimaxNonparametricHypothesis1999}.

Intuitively, some null distributions are easier to test because fewer observations fall within the statistical fluctuations allowed under the null hypothesis. This phenomenon, known as locality, has been observed in binomial, Multinomial, and Poisson distributions, and smooth densities in the context of fixed effects testing \citep{valiantAutomaticInequalityProver2014,balakrishnanHypothesisTestingDensities2019,chhor2021}.

A key application of locality arises in statistical meta-analysis of treatment effectiveness. Suppose $n$ studies are conducted, each applying the treatment to $t$ participants. To obtain a precise estimate of treatment effectiveness, scientists pool the data across studies. However, before combining results, it is crucial to verify whether the treatment effect is homogeneous across studies. This requires testing if the mixing distribution is concentrated around a single point: \begin{equation}
H_0: \pi= \delta_{p_0} \vs H_1: W_1(\pi,\delta_{p_0})\geq \epsilon
\end{equation} where $p_0 \in (0,1)$, called the reference effect, is typically unknown. In clinical trials, $p_0$ can be very small, making commonly used asymptotically valid tests, such as Pearson's chi-squared test \citep{pearsonCriterionThatGiven1900} and Cochran's chi-squared test \citep{cochranMethodsStrengtheningCommon1954}, unreliable since their asymptotic approximations do not hold \citep{parkTestingHomogeneityProportions2019}.

\pagebreak

\subsection{Summary of main contributions}
In this work, we address the following questions:
\begin{itemize}
\item[(i)] How difficult is goodness-of-fit testing for an arbitrarily complex null hypothesis?
\item[(ii)] How difficult is homogeneity testing?
\end{itemize}

For goodness-of-fit testing, the worst case is as difficult as estimation. When the null hypothesis is arbitrarily complex, no consistent test exists for a fixed number of trials. The optimal testing algorithm combines a plug-in estimator of the Wasserstein distance with a debiased Pearson's chi-squared test. Our main contribution is proving its optimality using Kravchuk polynomials \citep{krawtchoukGeneralisationPolynomesHermite1929}, which are orthogonal under the binomial distribution. They enable unbiased estimation of the mixing distribution's moments and link probabilistic distances between marginal distributions to moment differences between the mixing distributions. Analogously to the Hermite and Charlier polynomials for the Normal and Poisson distributions \citep{wuPolynomialMethodsStatistical2020}, they significantly simplify constructing worst-case scenarios where all testing algorithms fail.

For homogeneity testing with respect to a reference effect, the optimal test combines two Kravchuk polynomials. The problem's difficulty depends on the null distribution's location. When there are few trials, the data sparsity induced by the null distribution significantly affects the hardness of the data, meaning that mixing distributions that generate non-sparse data are harder to test. Nevertheless, it remains possible to consistently test whether the mixing distribution concentrates around the reference effect, even with as few as two trials. This result extends to homogeneity testing without a reference effect for which an optimal test can be constructed by debiasing Cochran's chi-squared test. Both methods can be used to construct tighter confidence intervals and enhance the power of p-values in clinical meta-analyses.

\textbf{Outline} The remainder of the paper is organized as follows: \zcref[S]{sec:problem} introduces the minimax framework, and \zcref[S]{sec:general_testing} studies goodness-of-fit testing.  \zcref[S]{sec:homogeneity_testing} introduces the local minimax framework and applies it to the homogeneity testing with simple null distributions. \zcref[S]{sec:homogeneity_testing_unkown} extends the analysis to homogeneity testing with a composite null distribution. Finally, \zcref[S]{sec:applications} applies the methodologies to problems in the medical and political sciences.

\textbf{Notation} Henceforth, we write $a_n \lesssim b_n$ if there exists a positive constant $C$ such that $a_n \leq C \cdot b_n$ for all $n$ large enough. Analogously, $a_n \asymp b_n$ denotes that $a_n \lesssim b_n$ and $b_n \lesssim a_n$. Furthermore, given a distribution $\pi$, we use $m_l(\pi)=E_{p\sim \pi}[p^l]$ to denote its $l$-th moment, and $V(\pi)=m_2(\pi)-m_1^2(\pi)$ to denote its variance. Let $\delta_A(a)$ equal $1$ if $a \in A$, and $0$ otherwise. Alternatively, we use the following notation $I(\text{condition})$ to denote the indicator function that evaluates to $1$ whenever the condition is true and returns $0$ otherwise. Finally, let $a\wedge b = \min(a,b)$ and $a\lor b = \max(a,b)$.

\section{The minimax framework}\label{sec:problem}

We develop the minimax framework to evaluate test performance. Consider $n$ binomial observations, each consisting of $t$ trials. The random effects model is given by \eqref{eq:binomial_mixture}. The probability mass function of the binomial distribution is \begin{equation}
P(A) = \text{Bin}(t,p)(A) =
\sum_{j\in A}  \binom{t}{j} p^j (1-p)^{t-j} =
\sum_{j=0}^t\ \delta_A(j) \cdot B_{j,t}(p),
\end{equation} where $B_{j,t}(p) = \binom{t}{j} p^j (1-p)^{t-j}$ is the $j$-th element of the $t$-order
Bernstein basis. The marginal measure of the data follows a mixture of binomial distributions: \begin{equation}\label{eq:marginal_measures}
P_{\pi}(A) = \int_0^1 \text{Bin}(t,p)(A)\ d\pi(p) = \sum_{j=1}^t\ \delta_{A}(j) \cdot b_{j,t}(\pi) \for A \subseteq \{0,\dots,t\}
\end{equation} where $b_{j,t}(\pi) = E_{p \sim \pi}[B_{j,t}(p)]$ for $1\leq j \leq t$ are called the expected fingerprint under $\pi$, representing the probability of observing $j$ successes under the binomial mixture. Thus, \eqref{eq:binomial_mixture} is equivalent to stating that we have $n$ observations from the marginal measure: $
X_i \sim P_\pi$ for $1\leq i \leq n$.

A key challenge is that the marginal measure of the data $P_\pi$ preserves only the first $t$ moments of the mixing distribution. This can be easily seen by noting \eqref{eq:marginal_measures} depends on $\pi$ only through the expected fingerprint and rewriting them as a function of the first $t$ moments \begin{equation}\label{eq:expected_fingerprint}
b_{j,t}(\pi) = \sum_{l=j}^t \binom{t}{l}\binom{l}{v}(-1)^{l-j}\cdot m_l(\pi) \where m_l(\pi) = E_{p\sim\pi}[p^l] \period
\end{equation} Hence, tests can estimate only the first $t$ moments of the mixing distribution, making mixing distributions that share these moments indistinguishable.

Given a null distribution $\pi_0$, we wish to test whether a mixing distribution equals the null or differs from it: $
H_0: \pi=\pi_0$ versus $H_1: \pi \not= \pi_0$. However, under this alternative hypothesis, the mixing distributions may be arbitrarily close. If we want to distinguish them while providing statistical guarantees, we can only do so asymptotically. To provide a finite sample analysis, some separation between the hypotheses is required. In this work, we measure their separation under the 1-Wasserstein distance: \begin{equation}\label{eq:w1_testing}
H_0: \pi = \pi_0 \vs H_1: W_1(\pi,\pi_0) \geq \epsilon.\end{equation} The 1-Wasserstein distance quantifies the cost of transporting mass from $\pi$ to $\pi_0$ \citep{villaniOptimalTransport2009,santambrogioOptimalTransportApplied2015}. Let $D$ denote the set of all distributions supported on $[0,1]$ and $\Gamma$ be the set of distributions supported on $[0,1]^2$ whose marginals are $\pi$ and $\pi_0$, then $W_1$ is defined as: $W_1(\pi,\pi_0) = \inf_{\gamma \in \Gamma}E_{(p,q)\sim \gamma} \left|p-q\right|$. Moreover, it admits a dual representation as an integrated probability metric \citep{kantorovich1958space}: \begin{equation}\label{eq:w1_dual}
W_1(\pi,\pi_0) = \sup_{f \in \Lip_1[0,1]} E_{p\sim\pi}\left[f(p)\right] - E_{p\sim\pi_0}\left[f(p)\right]
\end{equation} where $\Lip_1[0,1]$ is the set of all 1-Lipschitz functions supported on $[0,1]$, and the function achieving the supremum is called the witness function.

Our goal is to understand how small $\epsilon$ in \eqref{eq:w1_testing} can be such that there exists a test that consistently distinguishes the hypotheses. A test $\psi$ maps the sample space to $\{0,1\}$, returning $0$ if it considers that the data supports the null hypothesis and $1$ otherwise. Its type I error is measured as the probability of choosing the alternative hypothesis when the null hypothesis is true. Let $\Psi(\pi_0)$ be the set of all tests that control the type I error by $\alpha \in (0,1)$ for the null distribution $\pi_0$, called valid tests: $
\Psi(\pi_0)=\left\{\psi : P_{\pi_0}^n(\psi(X) = 1)\leq \alpha \right\}$ where $X=(X_1,\dots,X_n)$ is a vector containing $n$ observations. The risk of a valid test is given by its maximum type II error, i.e., the probability of choosing the null hypothesis when the alternative is true: \begin{equation}
R(\psi,\pi_0,\epsilon) = \sup_{W_1(\pi,\pi_0)\geq \epsilon} P_\pi^n(\psi(X)=0).
\end{equation} We call any valid test that controls the risk by $\beta \in (0,1-\alpha)$ powerful. The minimax risk quantifies the best performance over all valid tests under the worst-case null distribution \begin{equation}\label{eq:global_and_local_risk}
R_*(\epsilon) = \sup_{\pi_0\in D}R_*(\epsilon,\pi_0) \where R_*(\epsilon,\pi_0)=\inf_{\psi \in \Psi(\pi_0)}R(\psi,\pi_0,\epsilon) \period
\end{equation} A valid test is called minimax or optimal if its risk equals the minimax risk. For a fixed sample size $n$ and number of trials $t$, there might not exist any powerful valid test if $\epsilon$ is too small. Thus, we define the critical separation as the smallest $\epsilon$ such that there exists a powerful valid test: \begin{equation}\label{eq:critical_separation}
\epsilon_*(n,t) = \inf \left\{\epsilon : R_*(\epsilon) \leq \beta \right\}
\end{equation}For any $\epsilon < \epsilon_*$, no valid test can control the type II error by $\beta$. Furthermore, if $\liminf_n \epsilon _*(n,t)=0$, we say that there exists a sequence of tests that consistently distinguishes the hypotheses. That is, there exists a sequence of tests that detects any deviation from the null distribution in the limit of infinite observations.

To characterize the critical separation, we derive upper and lower bounds that match up to constants. Given any valid test $\psi$ that controls the risk by $\beta$, its risk function provides an upper bound on the critical separation: $$
\epsilon_*(n,t) \leq \inf\{\epsilon: \sup_{\pi_0 \in D}R(\psi,\pi_0,\epsilon) \leq \beta\}.$$ In the simplest case, a lower bound follows by identifying two mixing distributions, $\pi_0$ and $\pi_1$, that are far when measured by the Wasserstein distance but induce statistically indistinguishable marginal measures, e.g., $P_{\pi_0} {\buildrel d \over =} P_{\pi_1}$. Consequently, no test can distinguish them and the critical separation is lower-bounded by $\epsilon_*(n,t) \geq W_1(\pi_1,\pi_0)$. Our goal is to find upper and lower bounds of the critical separation that match up to constants.

\section{Goodness-of-fit testing}\label{sec:general_testing}

We derive the critical separation for goodness-of-fit testing under arbitrary null distributions. \zcref[S]{sec:plugin_test,sec:debiased_pearson_test} analyze the performance of the plugin and debiased Pearson chi-squared tests. \zcref[S]{sec:lower_bounds_gof} shows that the plugin test is optimal when $t > n$, while the debiased Pearson chi-squared test is optimal when $t < n$. 

\subsection{The plugin test}\label{sec:plugin_test}

Since we aim to detect deviations under the Wasserstein distance, a natural approach is to use the plug-in estimator. Consider the Wasserstein distance between the empirical and null distributions $\hat{W}_1(X) = W_1(\hat{\pi},\pi_0)$ where $\hat{\pi} = n^{-1}\sum_{i=1}^n\delta_{X_i/t} \period$ We reject the null hypothesis when $\hat{W}_1(X)$ exceeds its $1-\alpha$ quantile under the null distribution, denoted by $q_{\alpha}(P_{\pi_0}, \hat{W}_1)$, ensuring type I error control:
\begin{equation}\label{eq:w1_plugin_test}
\psi^\alpha_{\hat{W}_1}(X) = I\left( \hat{W}_1(X) \geq q_{\alpha}(P_{\pi_0},\hat{W}_1)\right) \period
\end{equation} Intuitively, the test works whenever $W_1(\hat{\pi},\pi_0)$ is a good approximation of $W_1(\pi,\pi_0)$. To understand the quality of the approximation, it is useful to separate the uncertainty due to the mixing distribution and the binomial sampling. Let $p_i$ be the random effect associated with $X_i$ in \eqref{eq:binomial_mixture} and define the unobserved empirical measure of random effects $
\tilde{\pi}=n^{-1}\sum_{i=1}^n\delta_{p_i}$. By the triangle inequality, it holds that \begin{equation}\label{eq:trig_ineq}
|W_1(\pi,\pi_0) - W_1(\hat{\pi},\pi_0)| \leq   W_1(\pi,\tilde{\pi}) + W_1(\tilde{\pi},\hat{\pi})\period
\end{equation} Hence, whenever the right-hand side is small, the plug-in statistic is a good approximation of $W_1(\pi,\pi_0)$. Since $\tilde{\pi}$ is an empirical measure sampled from $\pi$, Theorem 3.2 of \citet{bobkovOnedimensionalEmpiricalMeasures2019} tells us that the first term should not be large on average: $
E\left[W_1(\pi,\tilde{\pi})\right] \leq J(\pi)/\sqrt{n}$ where $J(\pi)=\int_{0}^1 \sqrt{F_\pi(x)(1-F_\pi(x))}\ dx$ and $F_{\pi}(x) = \int_0^x d\pi$ is the cumulative distribution function of $\pi$.

For the second term, conditioning on $\tilde{\pi}$, the concentration of $\hat{\pi}$ around $\tilde{\pi}$ can be measured by a Bernstein bound \citep{yeBinomialMixtureModel2021a}. Using this strategy and localizing the result around the null distribution, we derive the separation rates for the plug-in test. The proof is in \zcref[S]{sec:plugin_test_proof}.

\begin{restatable}{theorem}{PlugInTestRates}\label{lemma:plugin_test}
For hypotheses \eqref{eq:w1_testing}, the plug-in test \eqref{eq:w1_plugin_test} controls the type I error by $\alpha$. Moreover, there exists a universal positive constant $C$ such that the test controls the type II error by $\beta$ whenever
\begin{equation}
\epsilon \geq C \cdot \left[\frac{J(\pi_0)}{\sqrt{n}} + \sqrt{\frac{E_{p\sim\pi_0}\left[p(1-p)\right]}{t}} + \frac{1}{n} + \frac{1}{t}\right] \period
\end{equation}
\end{restatable}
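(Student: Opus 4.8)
The plan is to deduce type~I control directly from the construction of the test, and to obtain type~II control by feeding the triangle inequality \eqref{eq:trig_ineq} into a \emph{localization} argument that trades the $\pi$-dependent fluctuation terms for their $\pi_0$-analogues. Type~I control is immediate: by definition of the conditional quantile, $P_{\pi_0}^n\bigl(\hat{W}_1(X)\ge q_\alpha(P_{\pi_0},\hat{W}_1)\bigr)\le\alpha$, so $\psi^\alpha_{\hat{W}_1}\in\Psi(\pi_0)$ for every $\pi_0\in D$, hence $\psi^\alpha_{\hat{W}_1}\in\Psi$. It then remains to (i) bound the null quantile from above, and (ii) bound $\hat{W}_1$ from below, with high probability, under any alternative.

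For (i), apply \eqref{eq:trig_ineq} with $\pi=\pi_0$: $\hat{W}_1=W_1(\hat{\pi},\pi_0)\le W_1(\pi_0,\tilde{\pi}_0)+W_1(\tilde{\pi}_0,\hat{\pi})$, where $\tilde{\pi}_0=\tfrac1n\sum_i\delta_{p_i}$ with $p_1,\dots,p_n$ drawn i.i.d.\ from $\pi_0$. The first term has expectation at most $J(\pi_0)/\sqrt n$ by the bound of \citet{bobkovOnedimensionalEmpiricalMeasures2019} quoted above; the coupling $\tfrac1n\sum_i\delta_{(p_i,X_i/t)}$ gives $W_1(\tilde{\pi}_0,\hat{\pi})\le\tfrac1n\sum_i|p_i-X_i/t|$, whose expectation is at most $E_{p\sim\pi_0}\sqrt{p(1-p)/t}\le\sqrt{E_{p\sim\pi_0}[p(1-p)]/t}$ after applying Jensen's inequality conditionally on $p$ and then to the concave map $\sqrt{\cdot}$. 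Markov's inequality then yields $q_\alpha(P_{\pi_0},\hat{W}_1)\le\tfrac1\alpha\bigl(J(\pi_0)/\sqrt n+\sqrt{E_{p\sim\pi_0}[p(1-p)]/t}\bigr)$.

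For (ii), fix $\pi$ with $\epsilon':=W_1(\pi,\pi_0)\ge\epsilon$. The crux is localization. From the one-dimensional identity $W_1(\pi,\pi_0)=\int_0^1|F_\pi-F_{\pi_0}|\,dx$, the elementary bounds $|\sqrt a-\sqrt b|\le\sqrt{|a-b|}$ and $|F_\pi(1-F_\pi)-F_{\pi_0}(1-F_{\pi_0})|\le|F_\pi-F_{\pi_0}|$, and Cauchy--Schwarz, one gets $J(\pi)\le J(\pi_0)+\sqrt{\epsilon'}$; and since $p\mapsto p(1-p)$ is $1$-Lipschitz, the dual form \eqref{eq:w1_dual} gives $E_{p\sim\pi}[p(1-p)]\le E_{p\sim\pi_0}[p(1-p)]+\epsilon'$. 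Feeding these into the expectation bounds of (i) (now for $\pi$ and $\tilde{\pi}$) shows that $Z:=W_1(\pi,\tilde{\pi})+W_1(\tilde{\pi},\hat{\pi})$ satisfies $E[Z]\le\tfrac{J(\pi_0)+\sqrt{\epsilon'}}{\sqrt n}+\sqrt{\tfrac{E_{p\sim\pi_0}[p(1-p)]+\epsilon'}{t}}$. By Markov, $Z\le\tfrac1\beta E[Z]$ with $P_\pi^n$-probability at least $1-\beta$, and on that event \eqref{eq:trig_ineq} gives $\hat{W}_1\ge\epsilon'-\tfrac1\beta E[Z]$. Using $\sqrt{a+b}\le\sqrt a+\sqrt b$ to peel off the $\epsilon'$-parts and absorbing the cross terms by weighted AM--GM, $\sqrt{\epsilon'/n}\le\eta\epsilon'+C_\eta/n$ and $\sqrt{\epsilon'/t}\le\eta\epsilon'+C_\eta/t$ --- which is exactly where the $1/n$ and $1/t$ summands arise --- and then taking $\eta$ small and the constant $C$ (depending on $\alpha,\beta$ only) large enough that $\epsilon\ge C\bigl[J(\pi_0)/\sqrt n+\sqrt{E_{p\sim\pi_0}[p(1-p)]/t}+1/n+1/t\bigr]$, one forces $\hat{W}_1\ge\tfrac12\epsilon\ge q_\alpha(P_{\pi_0},\hat{W}_1)$ on the event. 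Hence the test rejects with $P_\pi^n$-probability at least $1-\beta$ for every such $\pi$; that is, $R(\psi^\alpha_{\hat{W}_1},\pi_0,\epsilon)\le\beta$.

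The main obstacle is the localization step: the bare decomposition \eqref{eq:trig_ineq} controls the fluctuations only by $J(\pi)/\sqrt n+\sqrt{E_{p\sim\pi}[p(1-p)]/t}$, quantities governed by the \emph{unknown} alternative that may be far larger than their $\pi_0$-counterparts, so recovering the $\pi_0$-localized rate hinges on the H\"older-$\tfrac12$ continuity of $J(\cdot)$ and the $1$-Lipschitz continuity of $p\mapsto p(1-p)$ under $W_1$, and on the subsequent AM--GM absorption of the $\sqrt{\epsilon'}$ cross terms into the $1/n+1/t$ budget. The remainder --- tracking absolute constants through Markov's inequality and never using $\epsilon'\le1$ to crudely dominate $\sqrt{\epsilon'}$ --- is routine bookkeeping.
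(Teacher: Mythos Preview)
Your proposal is correct and follows the same overall strategy as the paper: decompose via the triangle inequality \eqref{eq:trig_ineq}, control $E[W_1(\pi,\tilde\pi)]$ by $J(\pi)/\sqrt n$, localize both $J(\pi)$ and $E_{p\sim\pi}[\mu_p]$ around $\pi_0$ using the H\"older-$\tfrac12$ continuity of $J$ and the Lipschitz property of $p\mapsto p(1-p)$ under $W_1$, and then absorb the resulting $\sqrt{\epsilon'/n}$ and $\sqrt{\epsilon'/t}$ cross terms into the $1/n+1/t$ budget.

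There are two execution-level differences worth noting. First, the paper works through a Chebyshev ``mean difference dominates standard deviation'' condition on $T=\hat W_1$ and reduces to $E_{P_\pi}[T]\gtrsim E_{P_{\pi_0}}[T]$, whereas you apply Markov's inequality directly to $Z$ under the alternative and to $\hat W_1$ under the null; both reach the same sufficient condition. Second, and more interestingly, the paper controls $E_{P_\pi}[W_1(\tilde\pi,\hat\pi)]$ by writing $W_1$ in its CDF form and applying Bernstein's inequality pointwise in $x$, which produces $\sqrt{E_\pi[\mu_p]/t}+1/t$ with the $1/t$ coming from the linear regime of Bernstein. Your coupling bound $W_1(\tilde\pi,\hat\pi)\le\tfrac1n\sum_i|p_i-X_i/t|$ followed by $E[|p-X/t|\mid p]\le\sqrt{\mu_p/t}$ is more elementary and actually sharper (no standalone $1/t$); the $1/t$ term then enters only through the AM--GM localization. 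This is a genuine simplification of the paper's argument.
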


The first term indicates that the plug-in test adapts to the concentration of the null distribution, vanishing when $\pi_0$ is a point mass. The second term accounts for the adaptation to low binomial variance, meaning that the location of $\pi_0$ influences detection. When $\pi_0$ concentrates near the endpoints of $[0,1]$, the test detects smaller deviations from the null distribution. In the worst case, when $\pi_0$ does not concentrate, the plug-in test requires the separation between hypotheses to satisfy $\epsilon \gtrsim n^{-1/2} + t^{-1/2}$.

For $ n \geq t$, the test achieves a parametric rate. However, when $t$ is smaller, the term $t^{-1/2}$ dominates the separation. Intuitively, this occurs because $\hat{\pi}$ approximates the mixing distribution by coarsening it into a mixture of $t+1$ point masses. We can gain some insight by using approximation theory. Let $n_j$ denote the $j$th observed fingerprint: \begin{equation}\label{eq:observed_fingerprint}
n_j =  \sum_{i=1}^n I(X_i = j) \quad \text{for } j \in \{0,\dots,t\}\period
\end{equation} The empirical distribution can be written as $\hat{\pi}=\sum_{j=0}^t \delta_{j/t} \cdot n_j/n$. Taking the limit as $n\to\infty$, so that the only remaining uncertainty is due to the number of trials, we obtain $\tilde{\pi}\toD\pi$ and $\hat{\pi}\toD\sum_{j=0}^t \delta_{j/t} \cdot b_{j,t}(\pi)$. Thus, by \eqref{eq:trig_ineq} the error of the plug-in estimate is dominated by $W_1(\hat{\pi},\pi)$. Exploiting the duality of $W_1$ \eqref{eq:w1_dual}, we obtain the following representation of the error \begin{equation}
W_1(\hat{\pi},\pi) = \sup_{f\in \Lip[0,1]}E_{p\sim \pi}\left[f(p) - B_{t}(f,p) \right] \where B_{t}(f,p)=\sum_{j=0}^t f\left(j/t\right) \cdot B_{j,t}(p)
\end{equation}is the $t$-order Bernstein polynomial approximation of $f$. Thus, the plug-in approach is constrained by the error of approximating Lipschitz functions by Bernstein polynomials, which is of order $t^{-1/2}$ \citep{bustamanteBernsteinOperators2017}.

This error is known to be suboptimal. If we consider all polynomials of degree $t$, then the best uniform approximation of a Lipschitz function has an error of order $t^{-1}$ \citep{plaskotaApproximationComplexity2021}. Therefore, the plug-in test can be improved by modifying the polynomial approximation of the witness function in \eqref{eq:w1_dual}. In the following section, we show that directly comparing the observed fingerprints to their expected values under the null hypothesis can be related to approximating the witness function optimally.

\subsection{The debiased Pearson's chi-squared test}\label{sec:debiased_pearson_test}

Since $P_\pi$ is a $t$-dimensional Multinomial, a natural approach is to use a chi-squared test to compare the observed and expected fingerprints. \citet{indyk2003fast, weedSharpAsymptoticFinitesample2019} show that the $W_1$ distance is closely tied to the $\ell_1$ distance, suggesting that a chi-squared test that is powerful under $\ell_1$ separation can detect deviations under $W_1$. Formally, we want to reduce problem \eqref{eq:w1_testing} to observing $\left(n_1,\dots,n_t\right) \sim \text{Multinomial}(n,b_t(\pi))$ and testing \begin{equation}\label{eq:fingerprint_testing}
H_0: b_t(\pi) = b_t(\pi_0) \quad \text{v.s.} \quad  H_1: \norm{b_t(\pi)-b_t(\pi_0)}_1 \geq \epsilon\period
\end{equation} To relate the Wasserstein distance between the mixing distributions to the $\ell_1$ distance between the fingerprints, we use the duality of the Wasserstein distance \eqref{eq:w1_dual}. Let $\mathcal{P}_k$ be the space of polynomials supported on $[0,1]$ of degree at most $k$. Then, approximating the witness function by the best $k$-order polynomial gives the bound \begin{equation}\label{eq:up1}
W_1(\pi,\pi_0) \leq 2 \cdot \inf_{p_k \in \mathcal{P}_k}\norm{f-p_k}_\infty + \left|\ E_{q\sim\pi}\left[p_k(q)\right] - E_{q\sim\pi_0}\left[p_k(q)\right]\ \right| \period
\end{equation} where $\norm{f-p_k}_\infty=\sup_{x\in[0,1]}|f(x)-p_k(x)|$. The second term compares the first $k$ moments of $\pi$ and $\pi_0$. Since $P_\pi$ contains only information about the first $t$ moments, we consider only polynomials of degree at most $t$. To compare fingerprints instead of moments, we express $p_k$ in the $t$-order Bernstein basis: $
p_k(x) = \sum_{j=0}^t c_{j,k} \cdot B_{j,t}(x)$ for $k\leq t$. Recalling that $E_{q\sim \pi}[B_{j,t}(q)]=b_{j,t}(\pi)$ and propagating the expectation in \eqref{eq:up1}, the second term becomes bounded by the fingerprint difference under $\ell_1$ \begin{align}\label{eq:w1_to_l1}
W_1(\pi,\pi_0) \leq 2 \cdot \inf_{p \in \mathcal{P}_k}\norm{f-p}_\infty + \norm{c_k}_\infty \cdot \norm{b_t(\pi)-b_t(\pi_0)}_1 \period
\end{align} Thus, optimally reducing testing under $W_1$ \eqref{eq:w1_testing} to fingerprint testing under $\ell_1$ \eqref{eq:fingerprint_testing} requires balancing the uniform approximation error in the first term of \eqref{eq:w1_to_l1} with the fingerprint separation under $\ell_1$ in the second term of \eqref{eq:w1_to_l1}.

Testing Multinomials under the $\ell_1$ distance has been previously studied by \citet{batuTestingThatDistributions2000}, \citet{paninskiCoincidenceBasedTestUniformity2008,valiantAutomaticInequalityProver2014,chanOptimalAlgorithmsTesting2014}, \citet{balakrishnanHypothesisTestingDensities2019} and  \citet{chhor2021}, among others. We
revisit the minimax test for \eqref{eq:fingerprint_testing} using the machinery of the Kravchuk polynomials, which are central to both upper and lower bounds in this work. 

Let $m\in\{0,\dots,t\}$ and $p\in(0,1)$, the $m$-th normalized Kravchuk polynomial \citep{krawtchoukGeneralisationPolynomesHermite1929,szegoOrthogonalPolynomials1975,dominiciAsymptoticAnalysisKrawtchouk2005} is given by \begin{equation}
\tilde{K}_m(x,p,t) = \binom{t}{m}^{-1}\sum_{v=0}^m (-1)^{m-v} \binom{t-x}{m-v}\binom{x}{v}p^{m-v}(1-p)^v\period
\end{equation} They are the orthogonal with respect to the binomial distribution \citep{szegoOrthogonalPolynomials1975}, and provide unbiased estimates of the first $t$ moments of the mixing distribution; we defer the reader to \zcref{sec:kravchuk} for further details. 

To solve the fingerprint testing problem \eqref{eq:fingerprint_testing}, we construct an unbiased estimator of the $\ell_2$ distance between the observed and expected fingerprints using the second Kravchuk polynomial, and reject whenever the estimate is large \begin{equation}\label{eq:fingerprint_test}
\psi_{\hat{\ell}_2}^\alpha = I\left(\ \hat{\ell}_2(X) > q_\alpha(P_{\pi_0},\hat{\ell}_2) \right) \where \hat{\ell}_2(X) = \frac{1}{t+1}\sum_{j=0}^t \frac{\tilde{K}_2(n_j,b_{j,t}(\pi_0),n)}{\max\left(\frac{1}{t+1},\mu_{b_{j,t}(\pi_0)}\right)}
\end{equation} where $\mu_{b_{j,t}(\pi_0)}=b_{j,t}(\pi_0)\cdot (1-b_{j,t}(\pi_0))$. We call the corresponding test the debiased Pearson's chi-squared test, since expanding the definition of $\tilde{K}_2$, it is clear that it is a centered chi-squared statistic \begin{equation}
\tilde{K}_2(n_j,b_{j,t}(\pi_0),n) =  \left(\frac{n_j}{n}-b_{j,t}(\pi_0)\right)^2-\frac{\mu_{n_j/n}}{n-1} \period
\end{equation} where $\mu_{n_j/n}=(n_j/n)\cdot (1-n_j/n)$. Unlike the usual chi-squared statistic, the centering is random and debiases the estimator \citep{valiantAutomaticInequalityProver2014}. \citet{balakrishnanHypothesisTestingDensities2019} proved that this test is minimax optimal for testing under the $\ell_1$ distance.

\begin{lemma}[Theorem 3.2 of  \citet{balakrishnanHypothesisTestingDensities2019}]\label{thm:multinomial_l1_rates}
For hypotheses \eqref{eq:fingerprint_testing}, the debiased Pearson's $\chi^2$ test \eqref{eq:fingerprint_test} controls type I error by $\alpha$. Furthermore, there exists a constant $C>0$ such that the type II error of the test is bounded by $\beta$ whenever $
\epsilon \geq C \cdot t^{1/4}/\sqrt{n}$. 
\end{lemma}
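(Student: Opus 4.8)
The type I bound is immediate from the construction: $\psi_{\hat{\ell}_2}^\alpha$ rejects exactly when $\hat{\ell}_2(X)$ exceeds its own $(1-\alpha)$ quantile under $P_{\pi_0}$, so $P_{\pi_0}^n(\psi_{\hat{\ell}_2}^\alpha(X)=1)\le\alpha$ for every $\pi_0$. The substance is the type II bound, which I would obtain by the standard second-moment (Chebyshev) argument for multinomial identity testing, applied to the $(t+1)$-dimensional fingerprint parameter $b_t(\pi)$. Abbreviate $k=t+1$, $p_j=b_{j,t}(\pi_0)$, $q_j=b_{j,t}(\pi)$, and $w_j=\max(1/k,\mu_{p_j})$. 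Since marginally $n_j\sim\text{Bin}(n,q_j)$, the Kravchuk moment identity $E_{Y\sim\text{Bin}(n,q)}[\tilde{K}_2(Y,p,n)]=(p-q)^2$ shows that $\hat{\ell}_2$ is an unbiased estimator of a reweighted squared fingerprint distance, namely $E_\pi[\hat{\ell}_2(X)]=\tfrac1k\sum_{j=0}^t (q_j-p_j)^2/w_j$, and in particular $E_{\pi_0}[\hat{\ell}_2]=0$.

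The next step is to lower bound this mean under $H_1$ and to upper bound the fluctuations of $\hat{\ell}_2$. Because $\sum_j w_j\le\sum_j\mu_{p_j}+\sum_j 1/k\le 1+1=2$, Cauchy--Schwarz gives $\norm{b_t(\pi)-b_t(\pi_0)}_1\le\big(\sum_j w_j\big)^{1/2}\big(\sum_j (q_j-p_j)^2/w_j\big)^{1/2}\le(2k\,E_\pi[\hat{\ell}_2])^{1/2}$, so under $H_1$ one has $E_\pi[\hat{\ell}_2]\ge\epsilon^2/(2k)$. For the variance I would carry out a direct moment computation for the debiased multinomial $\ell_2$ statistic, showing it has the form $\mathrm{Var}_{\pi_0}[\hat{\ell}_2]\lesssim\tfrac{1}{k n^2}$ and $\mathrm{Var}_{\pi}[\hat{\ell}_2]\lesssim\tfrac{1}{k n^2}+\tfrac{E_\pi[\hat{\ell}_2]}{k n}$ with universal constants. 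Here the truncated weights $w_j=\max(1/k,\mu_{p_j})$ (the ``flattening'') do the work: each diagonal contribution is controlled via $\mu_{p_j}^2/w_j^2\le 1$ and $q_j/w_j\lesssim 1$, and the covariance terms arising from the multinomial dependence largely cancel, because the single coordinate with the largest ratio $p_j^2/w_j$ contributes its square to both $\big(\sum_j p_j^2/w_j\big)^2$ and $\sum_j p_j^4/w_j^2$.

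I expect this variance estimate to be the main obstacle, since one must rule out that a near-saturated fingerprint (for instance $b_{0,t}(\pi_0)$ or $b_{t,t}(\pi_0)$ close to $1$ when $\pi_0$ concentrates near an endpoint of $[0,1]$) inflates $\mathrm{Var}_{\pi_0}[\hat{\ell}_2]$ beyond order $(kn^2)^{-1}$; I would handle this by splitting the coordinates according to whether $w_j=1/k$ or $w_j=\mu_{p_j}$ and exploiting the two bounds $p_j^2/w_j\le\min\{k p_j^2,\,p_j/(1-p_j)\}$.

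Given the variance bounds the conclusion is routine. By Chebyshev, $q_\alpha(P_{\pi_0},\hat{\ell}_2)\le(\mathrm{Var}_{\pi_0}[\hat{\ell}_2]/\alpha)^{1/2}\lesssim(\alpha k)^{-1/2}n^{-1}$, so choosing $\epsilon\ge C\,t^{1/4}n^{-1/2}$ with $C$ large (depending only on $\alpha,\beta$) forces $E_\pi[\hat{\ell}_2]\ge\epsilon^2/(2k)\ge 2\,q_\alpha(P_{\pi_0},\hat{\ell}_2)$ under $H_1$. Then $P_\pi^n(\psi_{\hat{\ell}_2}^\alpha(X)=0)\le P_\pi^n\big(\hat{\ell}_2(X)\le\tfrac12 E_\pi[\hat{\ell}_2]\big)\le 4\,\mathrm{Var}_\pi[\hat{\ell}_2]/E_\pi[\hat{\ell}_2]^2$, and substituting $E_\pi[\hat{\ell}_2]\ge\epsilon^2/(2k)$ together with the variance bound makes the right-hand side at most $\beta$ once $C$ is large enough (the consequence $\epsilon\ge C n^{-1/2}$, which holds since $t\ge 1$, is what controls the $E_\pi[\hat{\ell}_2]/(kn)$ term). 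Since this argument is precisely the proof of Theorem 3.2 of \citet{balakrishnanHypothesisTestingDensities2019} specialized to support size $t+1$, one may alternatively invoke that result directly.
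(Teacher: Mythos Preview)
Your proposal is correct and aligns with the paper's treatment: the paper does not prove this lemma at all but simply invokes Theorem~3.2 of \citet{balakrishnanHypothesisTestingDensities2019} as a black box, which is exactly the option you mention in your final sentence. The one addition the paper makes is a remark that, although the cited result was stated for independent (Poissonized) fingerprints, it transfers to the multinomial (dependent) case via the Poissonization trick (appendix~C of \citet{canonneTopicsTechniquesDistribution2022}); you may want to include that remark, since your variance sketch treats the coordinates as if the dependence were benign without justifying it.
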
 Although the above result was originally derived for independent fingerprints, it holds for the case of dependent fingerprints due to the Poissonization trick; see Appendix C of \citet{canonneTopicsTechniquesDistribution2022}. By leveraging \zcref{thm:multinomial_l1_rates} and approximating the witness function in \eqref{eq:w1_to_l1}, the following lemma shows that the debiased Pearson's chi-squared test improves over the plug-in test when $t \lesssim n$. The proof can be found in \zcref[S]{sec:fingerprint_test_proof}.  \begin{restatable}{theorem}{FingerprintTest}\label{lemma:fingerprint_test} For hypotheses \eqref{eq:w1_testing}, the debiased Pearson's $\chi^2$ test \eqref{eq:fingerprint_test} controls type I error by $\alpha$. Furthermore, for any constant $\delta > 0$, there exists positive constant $C_\delta$ depending on $\delta$ such that the type II error is bounded by $\beta$ whenever \begin{equation}\label{eq:minimax_rates}
\epsilon \geq C_\delta \cdot \begin{dcases}
\frac{1}{t} &\for  t \lesssim \log n \\
\frac{1}{\sqrt{t \log n}} &\for \log n \lesssim t \lesssim \frac{n^{1/4-\delta}}{(\log n)^{3/8}}
\end{dcases}
\end{equation}
\end{restatable} 

\subsection{Lower bounds on the critical separation}\label{sec:lower_bounds_gof}

In this section, we argue that combining the plugin and debiased Pearson's $\chi^2$ test via a Bonferroni correction: \begin{equation}\label{eq:global_minimax_test}
\psi_{GM}^{\alpha}(X) = \max\left(\ \psi_{\hat{\ell}_2}^{\alpha/2}(X)\comma \psi_{\hat{W}_1}^{\alpha/2}(X)\ \right) \period
\end{equation} is optimal, up to constants and $\log n$ factors, outside the region $n^{1/4-\delta}\lesssim t \lesssim n$. 

The lower bounds on the critical separation \eqref{eq:critical_separation} rely on Le Cam's two-point method. We find $\pi_0$ and $\pi_1$ that maximize $W_1(\pi_1,\pi_0)$ while ensuring they induce similar marginal distributions, i.e., their total variation distance $V(P_{\pi_0}, P_{\pi_1})$ is small. Prior work \citep{tian2017,vinayakMaximumLikelihoodEstimation2019,kong2017} has established the following results by explicitly constructing mixing distributions or cleverly bounding the total variation distance. Here, we present an alternative proof for those results using approximation theory and Kravchuk polynomials, which avoids explicit construction and simplifies controlling the total variation between marginal distributions.

Le Cam's method states that an upper bound on the total variation between the marginal measures, denoted by $V$, implies a lower bound on the critical separation. \begin{restatable}[Le Cam's method, Theorem 2.2 of \citet{tsybakovIntroductionNonparametricEstimation2009}]{lemma}{LeCamLowerBound} \label{lemma:lecam_lb} Let $C_\alpha=1-(\alpha+\beta)$. For any $\pi_0,\pi_1 \in D$ such that $V\left(P_{\pi_0},P_{\pi_1}\right) < C_\alpha/2n$, the critical separation is lower bounded $\epsilon_*(n,t) \geq W_1(\pi_0,\pi_1)$.\end{restatable} The key to simplifying this optimization is relating the total variation distance between the marginal measures to a suitable distance between the mixing distributions. Since the binomial density preserves only the first $t$ moments of the mixing distributions, it is natural to compare them using moment differences. 
\begin{lemma}\label{lemma:tvbound} For any $p\in (0,1)$, and mixing distribution $\pi_0$ and $\pi_1$ in $D$, it holds that \begin{equation}
V\left(P_{\pi_0},P_{\pi_1}\right) \leq \frac{1}{2} \cdot \sqrt{M_p(\pi_0,\pi_1)} \where M_p(\pi_0,\pi_1)=\sum_{m=1}^t \binom{t}{m} \cdot \frac{\Delta_m^2(\pi_1,\pi_0)}{\mu_p^m},
\end{equation} $\mu_{p}=p\cdot (1-p)$ and $\Delta_m(\pi_1,\pi_0) = E_{u\sim\pi_1}[u-p]^m-E_{u\sim\pi_0}[u-p]^m$ is the $m$-th moment difference.
\end{lemma}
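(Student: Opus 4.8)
The plan is to reduce the total variation distance to a $\chi^2$-type divergence against the auxiliary reference measure $\text{Bin}(t,p)$, and then to evaluate that divergence \emph{exactly} by expanding a suitable difference of density ratios in the orthogonal basis of normalized Kravchuk polynomials $\tilde K_m(\cdot,p,t)$. The free parameter $p$ appearing in the statement is precisely the location of this reference measure, which is why one obtains an entire family of bounds.

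\emph{Step 1 (Cauchy--Schwarz / $\chi^2$ bound).} Since $0<p<1$, every Bernstein weight $B_{j,t}(p)$ is strictly positive on $\{0,\dots,t\}$, and recalling that the atoms of $P_{\pi_i}$ are $b_{j,t}(\pi_i)$, I write
\[
2\,V(P_{\pi_0},P_{\pi_1})=\sum_{j=0}^t\big|b_{j,t}(\pi_1)-b_{j,t}(\pi_0)\big|=\sum_{j=0}^t\frac{|b_{j,t}(\pi_1)-b_{j,t}(\pi_0)|}{\sqrt{B_{j,t}(p)}}\cdot\sqrt{B_{j,t}(p)}\le\sqrt{\chi^2_p}\,,
\]
where the last step is Cauchy--Schwarz together with $\sum_j B_{j,t}(p)=1$, and $\chi^2_p:=\sum_{j=0}^t (b_{j,t}(\pi_1)-b_{j,t}(\pi_0))^2/B_{j,t}(p)$. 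It therefore suffices to prove $\chi^2_p=M_p(\pi_0,\pi_1)$.

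\emph{Step 2 (Kravchuk expansion).} Set $g(j):=(b_{j,t}(\pi_1)-b_{j,t}(\pi_0))/B_{j,t}(p)$, so that $\chi^2_p=\sum_j B_{j,t}(p)\,g(j)^2=\|g\|_{L^2(\text{Bin}(t,p))}^2$. Because $\text{Bin}(t,p)$ charges all $t+1$ atoms $\{0,\dots,t\}$ and the Kravchuk polynomials $\tilde K_0,\dots,\tilde K_t$ have distinct degrees $0,1,\dots,t$, they form an orthogonal basis of this $(t+1)$-dimensional space (\citet{szegoOrthogonalPolynomials1975}). Expanding $g=\sum_{m=0}^t a_m\tilde K_m(\cdot,p,t)$ and using \eqref{eq:kravhuk_orthogonality},
\[
a_m=\frac{\binom tm}{\mu_p^m}\sum_{j=0}^t B_{j,t}(p)\,g(j)\,\tilde K_m(j,p,t)=\frac{\binom tm}{\mu_p^m}\sum_{j=0}^t\big(b_{j,t}(\pi_1)-b_{j,t}(\pi_0)\big)\tilde K_m(j,p,t)\,.
\]
Now $\sum_j b_{j,t}(\pi_i)\tilde K_m(j,p,t)=E_{q\sim\pi_i}\big[\sum_j B_{j,t}(q)\tilde K_m(j,p,t)\big]=E_{q\sim\pi_i}\big[(-1)^m(p-q)^m\big]=E_{u\sim\pi_i}[(u-p)^m]$ for $m\ge1$, where the middle equality is the stated moment identity for Kravchuk polynomials, while for $m=0$ the sum equals $1$ since $\tilde K_0\equiv1$. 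Hence $a_0=0$ and $a_m=\binom tm\,\Delta_m(\pi_1,\pi_0)/\mu_p^m$ for $m\ge1$, and Parseval gives
\[
\chi^2_p=\sum_{m=0}^t a_m^2\,\|\tilde K_m(\cdot,p,t)\|_{L^2}^2=\sum_{m=1}^t\Big(\frac{\binom tm\Delta_m(\pi_1,\pi_0)}{\mu_p^m}\Big)^2\frac{\mu_p^m}{\binom tm}=\sum_{m=1}^t\binom tm\frac{\Delta_m^2(\pi_1,\pi_0)}{\mu_p^m}=M_p(\pi_0,\pi_1)\,.
\]

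Everything is mechanical once the reference measure has been fixed, so I do not expect a real obstacle. The only points deserving care are the classical facts that $\{\tilde K_m(\cdot,p,t)\}_{m=0}^t$ forms a complete orthogonal system for $L^2(\text{Bin}(t,p))$ — which is where $0<p<1$ enters, ensuring all $t+1$ atoms are charged and $\mu_p=p(1-p)>0$ so that the normalizations in \eqref{eq:kravhuk_orthogonality} make sense — and the observation that the $m=0$ mode drops out because $P_{\pi_0}$ and $P_{\pi_1}$ are both probability measures (equivalently $\sum_j b_{j,t}(\pi_1)=\sum_j b_{j,t}(\pi_0)=1$). The conceptual payoff of this route is that it replaces any explicit comparison of $P_{\pi_0}$ and $P_{\pi_1}$ by the two stated Kravchuk identities, which is exactly what makes the subsequent lower-bound constructions tractable.
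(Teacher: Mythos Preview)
Your proof is correct and essentially identical to the paper's. The paper first invokes the local Taylor expansion $B_{j,t}(u)=B_{j,t}(p)\sum_{m} K_m(j,p,t)(u-p)^m/\mu_p^m$ to write the $\ell_1$ fingerprint difference as $\sum_j B_{j,t}(p)\big|\sum_m K_m(j,p,t)\Delta_m/\mu_p^m\big|$, then applies Cauchy--Schwarz (phrased as Jensen) and the orthogonality relation; you apply Cauchy--Schwarz first to isolate $\chi^2_p$, then compute the Kravchuk coefficients of $g$ via the moment identity and conclude by Parseval---the same ingredients in reversed order, with your presentation making the role of the reference measure $\text{Bin}(t,p)$ and the exact identity $\chi^2_p=M_p(\pi_0,\pi_1)$ slightly more transparent.
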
 The proof, found in \zcref[S]{sec:tvbound_proof}, relies on the orthogonality of the Kravchuk polynomials. It follows a structure similar to analogous results for mixtures of Gaussian or Poisson distributions, which use Hermite and Charlier polynomials~\citep{wuOptimalEstimationGaussian2019}. By \zcref{lemma:lecam_lb,lemma:tvbound}, finding a lower bound on the critical separation reduces to: \begin{equation}\label{eq:opt_2}
\epsilon_*(n,t) \geq \sup_{\pi_0,\pi_1 \in D}W_1(\pi_0,\pi_1) \st M_p(\pi_0,\pi_1) < (C_\alpha/n)^2\period
\end{equation} To connect the problem to approximation theory, let $D_\delta$ be the set of distributions distributions supported on $[1/2-\delta,1/2+\delta]$, and choose $(\delta,L)$ so that any pair of distributions in $D_{\delta}$ that share $L$ moments satisfy $M(\pi_0,\pi_1) < (C_\alpha/n)^2$ , then \eqref{eq:opt_2} is lower-bounded by: \begin{equation}\label{eq:opt_3}
\epsilon_*(n,t) \geq \sup_{\pi_0,\pi_1 \in D_{\delta}}W_1(\pi_0,\pi_1) \st m_l(\pi_0)=m_l(\pi_1) \for 1 \leq l \leq L\period
\end{equation}%
Recall from \eqref{eq:w1_dual} that the Wasserstein distance equals the maximum mean difference over all 1-Lipschitz functions. A lower bound for \eqref{eq:opt_3} arises by selecting a specific 1-Lipschitz function, such as the absolute value: \begin{align}\label{eq:opt_4}
\epsilon_*(n,t) \geq M(\delta,L) \where M(\delta,L)=&\sup_{\pi_0,\pi_1 \in D_\delta}E_{p\sim\pi_1}|p|-E_{p\sim\pi_0}|p|\\
&\st m_l(\pi_0)=m_l(\pi_1) \for 1 \leq l \leq L\period
\end{align} The dual of this moment-matching problem is the best polynomial approximation of the absolute value function. \begin{lemma}[Appendix E of \citet{wuMinimaxRatesEntropy2016}]\label{lemma:best_poly_approx} Let $P_L$ be the set of all $L$-order polynomials supported on $[-\delta,\delta]$, then $$M(\delta,L) = 2 \cdot A(\delta,L) \where A(\delta,L) =\inf_{f \in P_L}\sup_{|x|\leq \delta}\left||x|-f(x)\right|.$$
\end{lemma}
\citet{bernsteinOrdreMeilleureApproximation1912} studied the best polynomial approximation of the absolute value function and proved that: $
A(\delta,L) = (\beta_1 + C_{L}) \cdot \delta/L
$ where $C_{L} \to 0$ as $L \to \infty$ and $\beta_1 \approx 0.29$. Consequently, using \eqref{eq:opt_4} and \zcref{lemma:best_poly_approx}, we obtain a lower-bound on the critical separation: $
\epsilon_*(n,t) \geq 2(\beta_1 + C_{L}) \cdot \delta/L$. By adjusting the support of the mixing distributions and the number of matched moments, one can derive different lower bounds on the critical separation. This technique, known as the \textit{moment-matching} method \citep{ingsterTestingHypothesisWhich2001,wuPolynomialMethodsStatistical2020}, yields the following result. \begin{restatable}{theorem}{GlobalMinimaxRates}\label{lemma:global_minimax_rates}
The critical separation \eqref{eq:critical_separation} is lower-bounded by \begin{equation}\label{eq:global_minimax_rates} \epsilon_*(n,t) \gtrsim \max\left(\frac{1}{t}\ ,\ \frac{1}{\sqrt{t \log n}}\ ,\ \frac{1}{\sqrt{n}} \right).
\end{equation} \end{restatable} The proof is provided in \zcref[S]{sec:global_minimax_lowerbounds}. The lemma states that whenever the number of trials is much smaller than the sample size, meaning $t \lesssim \log n$, the statistical uncertainty with respect to $n$ is negligible. This follows from \zcref{lemma:tvbound}, which shows that two mixing distributions that share $t$ moments produce identical marginal measures. The farthest that two such distributions can be under $W_1$ is $1/t$. For applications, this implies that if $t$ is constant, increasing the number of observations does not help us distinguish the hypotheses, which motivates the local analysis in \zcref[S]{sec:homogeneity_testing}.

When the number of trials is larger than the sample size, i.e., $\log n/n \lesssim t$, we are in the
parametric regime. This can be foreseen by using an asymptotic argument. Scale the data $\tilde{X}_i = X_i/t$ and let $\tilde{P}_\pi$ denote its distribution. For a function $f$, let $B_t(f)$ denote its Bernstein polynomial approximation. If $f$ is a continuous bounded function, $B_t(f)$ converges to $f$ uniformly on $[0,1]$. Consequently, $\tilde{P}_\pi$ converges weakly to $\pi$ since $
\lim_{t\to\infty} E_{\tilde{P}_\pi}[f(X)]=\lim_{t\to\infty} E_{\pi}[B_t(f,p)]=E_{\pi}[f(p)]\period$ At the limit, the testing problem \eqref{eq:w1_testing} reduces to observing $\tilde{X}_i \iid \pi $ for $i \in\{1,\dots,n\}$ and testing $H_0: \pi = \pi_0$ versus  $H_1: W_1(\pi,\pi_0) \geq \epsilon$, for which the critical separation is known to be $\epsilon \asymp n^{-1/2}$ \citep{baSublinearTimeAlgorithms2011}.

In the intermediate regime, where $\log n \lesssim t \lesssim \log n/n$, the lower bound follows from constructing mixing distributions that match $t\log n$ moments, which is the optimal number of moments that one needs to estimate to approximate the underlying mixing distribution in this regime \citep{vinayakMaximumLikelihoodEstimation2019}.

Up to constants, the lower-bound \eqref{eq:global_minimax_rates} matches the separation rates achieved by the test \eqref{eq:global_minimax_test}, which indicates its optimality up to constants. Moreover, the critical separation matches the known estimation rates \citep{tian2017,vinayakMaximumLikelihoodEstimation2019}. Thus, testing the goodness of fit of arbitrary distributions is as hard as estimating the underlying mixing distribution.

\subsection{Simulations}\label{sec:general_testing_simulations}

To compare the plug-in test \eqref{eq:w1_plugin_test} and debiased Pearson's $\chi^2$ test \eqref{eq:fingerprint_test}, we evaluate them against the distributions used to produce the lower bounds on the critical separation. The plug-in test \eqref{eq:w1_plugin_test} achieves parametric rates for large $t$. To obtain that behaviour, the lower bound is constructed using the following family of distributions, where the probabilities of the null mixing distribution are perturbed.  \begin{equation}\label{eq:low_prob_perturbation}
\pi_0 = \frac{1}{2} \cdot \left( \delta_0 +  \delta_1 \right) \textand \pi_1 = \left(\frac{1}{2}-\epsilon\right) \cdot \delta_0 + \left(\frac{1}{2}+\epsilon\right) \cdot  \delta_1 \quad \for 0\leq \epsilon\leq \frac{1}{2} \period
\end{equation} Under these mixing distributions, the marginal measures are supported at $\{0,t\}$. Thus, increasing the number of trials does not yield additional information, limiting the power a test can achieve as $t$ grows.  \zcref[S]{fig:power_indentity_testing_prob_perturbation} confirms that all tests have the same power curve regardless of $t$.

\begin{figure}[ht]
\centering
\includegraphics[width=0.95\linewidth]{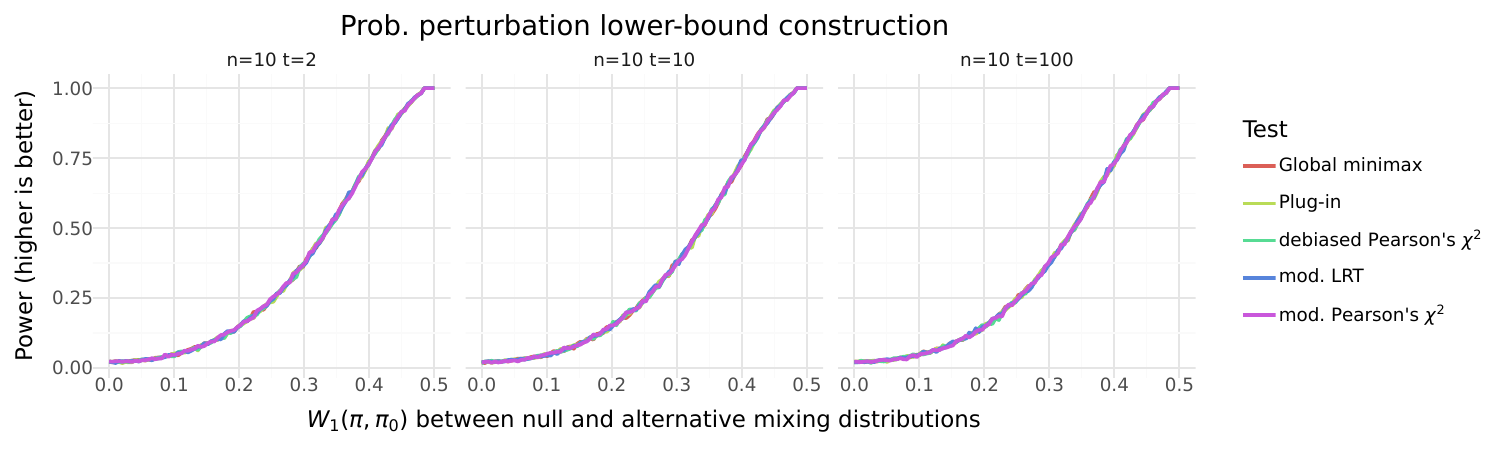}
\caption{Power curves for valid tests under alternative distribution generated by \eqref{eq:low_prob_perturbation}. All tests have the same power curves regardless of the number of trials $t$.}
\label{fig:power_indentity_testing_prob_perturbation}
\end{figure}

Note that additional tests appear in our simulation; their definitions are provided in \zcref[S]{sec:gof_test_definitions}. In particular, the modified Pearson's $\chi^2$ and likelihood ratio tests are inspired by the debiased Pearson's $\chi^2$ test \eqref{eq:fingerprint_test} and truncate the denominator whenever it vanishes. Without truncation, these tests produce numerical issues in our simulation because the mixing distributions put a non-negligible amount of mass at the endpoints of $[0,1]$.

\begin{figure}[ht]
\centering
\includegraphics[width=\linewidth]{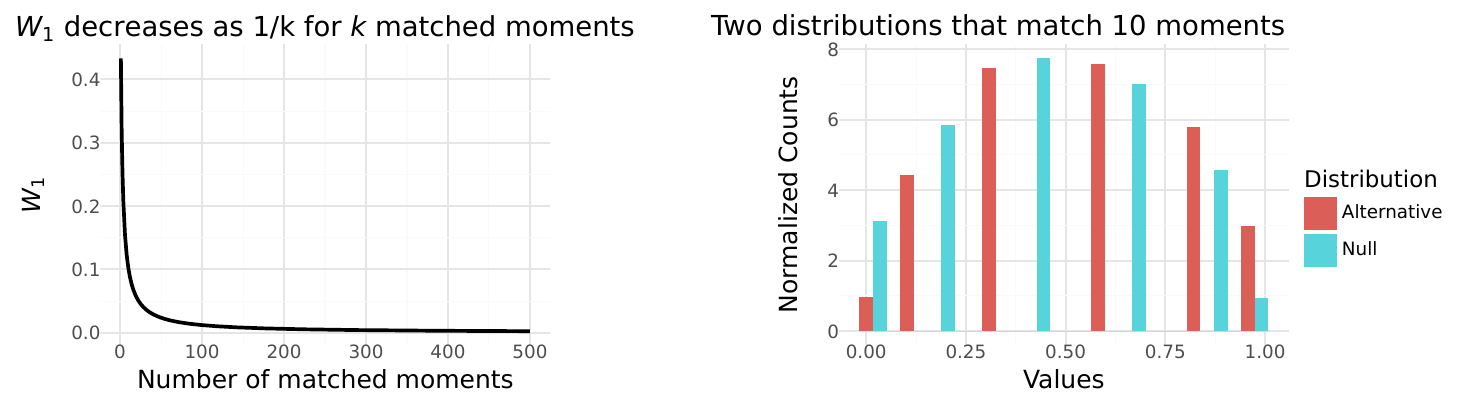}
\caption{The left panel displays the Wasserstein distance between the moment matching distributions as the number of matched moments increased. The right panel displays two distributions that match 10 moments. }
\label{fig:moment_matching}
\end{figure}

We use moment-matching distributions to match the separation rates of the debiased Pearson's $\chi^2$ test in \zcref{lemma:fingerprint_test}. A simple construction of such distributions is provided by \citep{kong2017}. \zcref[S]{fig:moment_matching} shows that as we match more moments, the $W_1$ between them decays like $1/k$ where $k$ is the number of matched moments.

\begin{proposition}[Proposition 2 of \citet{kong2017}]\label{lemma:moment_matching_distributions} Given $k \in \N$, there exists $\pi_0$ and $\pi_1$ mixing distributions supported on the $[0,1]$ interval that match their first $k$ moments and  $W_1(\pi_1,\pi_0)\geq c/k$ where $c$ is a positive constant. 
\end{proposition}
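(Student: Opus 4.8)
The plan is to construct the moment-matching pair explicitly on a symmetric, convenient grid and then read off both the moment-matching property and the Wasserstein lower bound from elementary combinatorial identities. First I would place the atoms at the equispaced points $x_j = j/k$ for $0 \le j \le k$, and look for two probability vectors $\pi_0$ and $\pi_1$ on this grid whose difference $\Delta = \pi_1 - \pi_0$ is orthogonal to $1, x, x^2, \dots, x^{k-1}$, i.e. $\sum_j \Delta_j x_j^l = 0$ for $0 \le l \le k-1$. The space of signed measures supported on $k+1$ points that are orthogonal to the first $k$ monomials is one-dimensional, and a canonical generator is obtained from the $k$-th finite difference: take $\Delta_j \propto (-1)^j \binom{k}{j}$, since $\sum_{j=0}^k (-1)^j \binom{k}{j} P(j) = 0$ for every polynomial $P$ of degree $\le k-1$ (the $k$-th forward difference annihilates such polynomials). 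Scaling so that the total positive mass equals the total negative mass and both stay below, say, $1$ on a suitable base measure gives honest probability distributions; concretely one can let $\pi_0$ carry the mass on even-indexed $j$ and $\pi_1$ the mass on odd-indexed $j$ (after a common shift), with normalization constant $2^{-k}\binom{k}{j}$ at site $j$, so that $\pi_0$ and $\pi_1$ are the conditionings of a $\mathrm{Bin}(k,1/2)$ law to even and odd outcomes respectively (suitably rescaled to $[0,1]$). This pair automatically matches the first $k$ moments by the finite-difference identity, which is the clean way to get \zcref{lemma:moment_matching_distributions} without case analysis.

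Next I would lower-bound $W_1(\pi_1,\pi_0)$. Using the dual formulation \eqref{eq:w1_dual} it suffices to exhibit a single $1$-Lipschitz test function $f$ with $E_{\pi_1}[f] - E_{\pi_0}[f] \ge c/k$; the natural choice is a sawtooth that alternates sign between consecutive grid points, e.g. $f(x)$ linear on each interval $[x_{j},x_{j+1}]$ with $f(x_j) = (-1)^j \cdot \tfrac{1}{2k}$, which is $1$-Lipschitz since it changes by $1/k$ over each length-$1/k$ interval. Against this $f$, $E_{\pi_1}[f] - E_{\pi_0}[f] = \tfrac{1}{2k}\sum_j (-1)^j \Delta_j$, and since $\Delta_j$ already carries the sign $(-1)^j$ up to a positive normalization, every term is nonnegative and the sum is a fixed positive constant (it is $\tfrac{1}{2k}$ times the total variation mass of $\Delta$, which our normalization pins down to a constant independent of $k$). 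This yields $W_1(\pi_1,\pi_0) \ge c/k$ with an explicit $c$.

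The main obstacle I anticipate is bookkeeping around normalization and feasibility: one must check that the constructed $\pi_0,\pi_1$ are genuine probability measures on $[0,1]$ (nonnegative, total mass one) while keeping the total variation mass of $\Delta$ bounded below by a constant uniformly in $k$ — the alternating binomial weights $(-1)^j\binom{k}{j}$ have enormous oscillation, so the raw finite-difference vector is not itself a difference of probability measures, and the fix (dividing by $2^{k}$ and interpreting the even/odd parts as conditioned binomials) has to be done carefully so that the $1/k$ Wasserstein gap survives. An alternative that sidesteps the large binomial coefficients, and which I would fall back on if the direct computation gets unwieldy, is the classical Chebyshev-based construction: take $\pi_0,\pi_1$ supported on the roots of $(1-x^2)U_{k-1}(x)$ rescaled to $[0,1]$ with weights given by the Gauss–Chebyshev quadrature, which match $k$ moments of the arcsine-type extremal measures, and whose Wasserstein distance is controlled by the best-approximation error of $|x|$ via \zcref{lemma:best_poly_approx} and \eqref{eq:bernstein_approx}; this is in fact exactly the route already used for \zcref{lemma:global_minimax_rates}, so the proposition can alternatively be recovered as a byproduct of that argument with $\delta$ a constant and $L = k$.
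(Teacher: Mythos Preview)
The paper does not prove this proposition; it is quoted from \citet{kong2017}. The closest in-paper analogue is \zcref{lemma:w1_max_distance}, which establishes the same $c/k$ lower bound by the duality route you list as your fallback: restrict to a symmetric interval, invoke \zcref{lemma:best_poly_approx} to convert the moment-matching optimization into best polynomial approximation of $|x|$, and then apply \eqref{eq:bernstein_approx}. So your fallback is exactly the paper's own argument.

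Your primary construction is correct and genuinely different. The conditioning of $\mathrm{Bin}(k,1/2)$ on parity, rescaled to $[0,1]$, produces bona fide probability measures whose difference is $\Delta_j=(-1)^{j+1}2^{1-k}\binom{k}{j}$; the total variation of $\Delta$ is exactly $2$ for every $k$, so the normalization issue you flag does not in fact bite. The sawtooth witness then gives $W_1\ge 1/k$ with an explicit constant, avoiding any appeal to Bernstein's approximation theorem, which is a real simplification over the duality argument. One caveat: your $\Delta$ is orthogonal to $1,x,\dots,x^{k-1}$, so you match moments $1,\dots,k-1$, not $1,\dots,k$ as stated; this is an off-by-one that is repaired by running the same construction with $k+1$ in place of $k$, at the cost only of the constant. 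The trade-off between the two approaches is that the duality route is non-constructive but immediately generalizes (e.g.\ to other witness functions, other support intervals, as used in \zcref{lemma:global_minimax_rates}), whereas your explicit construction gives concrete distributions suitable for simulation, which is precisely the role this proposition plays in \zcref{sec:general_testing_simulations}.
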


\zcref[S]{fig:power_indentity_testing_moment_matching} shows the power curves of the tests under the moment matching mixing distributions. The minimax theory predicts that when the number of trials $t$ is sufficiently smaller than the number of observations $n$, the debiased Pearson's $\chi^2$ test should perform better in the worst case. Conversely, when $t$ is much larger, the plug-in test should be able to obtain higher power. Both phenomena can be observed in the figure. \begin{figure}[ht]
\centering
\includegraphics[width=\linewidth]{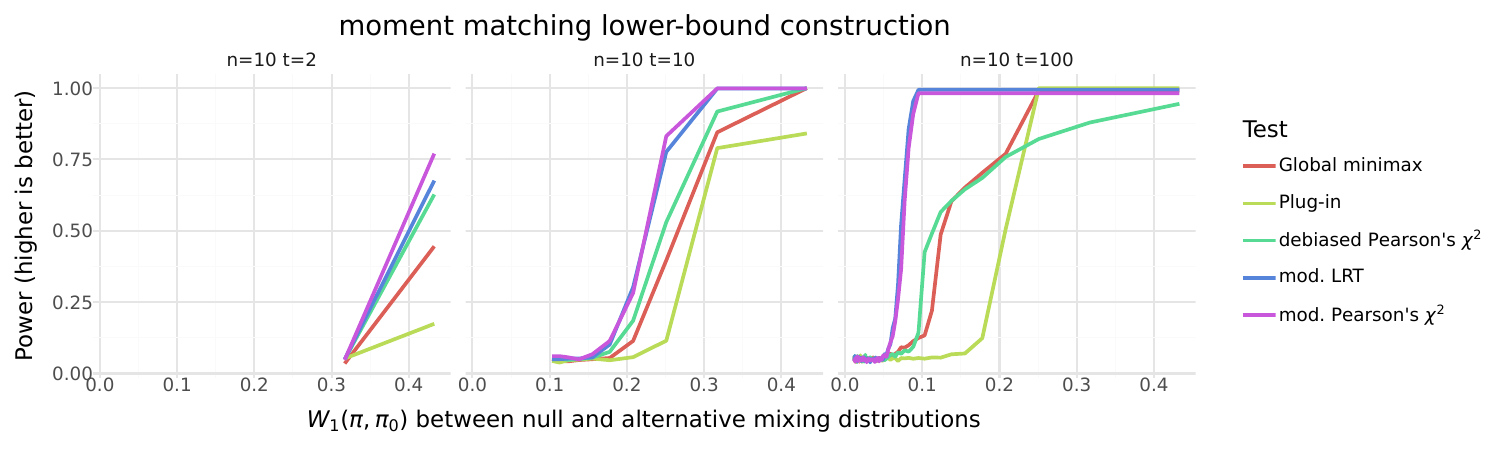}
\caption{Power curves for valid tests under alternatives generated by \zcref{lemma:moment_matching_distributions}.}
\label{fig:power_indentity_testing_moment_matching}
\end{figure}

Finally, \zcref{fig:w1_indentity} illustrates the empirical critical separation, which represents the minimum distance between the null and alternative mixing distributions required for the tests to have power greater than $1-\alpha$ and type I error below $\alpha$. Two regimes arise from the minimax critical separation \eqref{eq:global_minimax_rates}. When $t$ is small relative to $n$, the empirical critical separation is controlled by the moment-matching construction. Initially, increasing $t$ reduces the minimum detectable separation between the null and alternative mixing distributions. However, for sufficiently large $t$, the probability perturbation construction dominates, causing the empirical critical separation to plateau. \begin{figure}[!ht]
\centering
\includegraphics[width=0.9\linewidth]{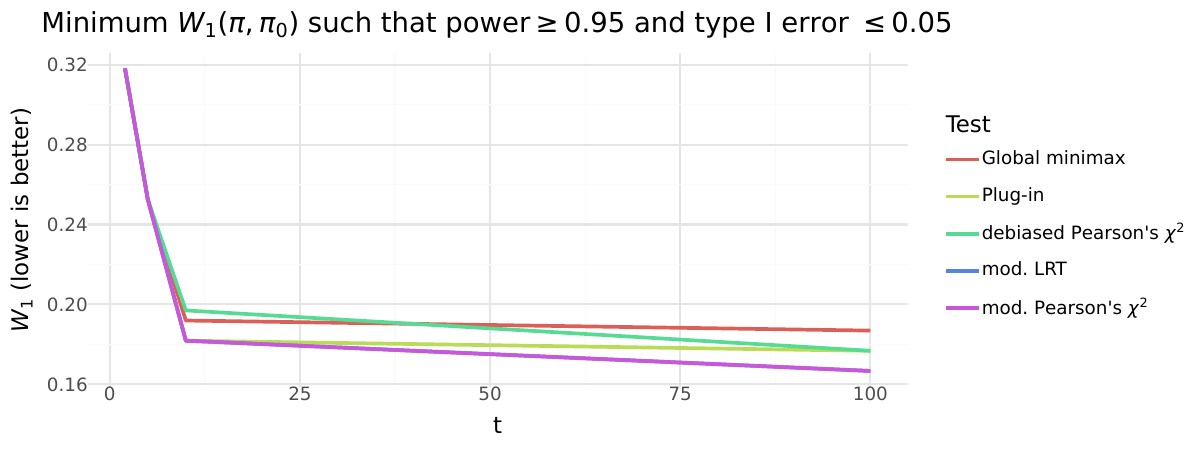}
\caption{Empirical critical separation as the number of trials $t$ is varied below and above the number of observations $n$}
\label{fig:w1_indentity}
\end{figure}

The figure also highlights a key issue of global minimax analysis. Although \zcref{fig:power_indentity_testing_moment_matching} shows that the modified Pearson's $\chi^2$ and likelihood ratio tests perform much better than the other tests, the global minimax perspective, which takes into account all possible mixing distributions, suggests a different conclusion. From this viewpoint, their performance is comparable to the debiased Pearson's $\chi^2$ test.

\section{Homogeneity testing with a reference effect}\label{sec:homogeneity_testing}

Homogeneity testing is a key case of goodness-of-fit testing with applications in statistical meta-studies, where assessing agreement among studies is necessary before pooling their information. It corresponds to testing if the random effects \eqref{eq:binomial_mixture} come from a single point mass: \begin{equation}\label{eq:homogeneity_testing}
H_0: \pi = \delta_{p_0} \quad \text{v.s.} \quad  H_1:  W_1(\pi,\delta_{p_0}) \geq \epsilon
\end{equation} where $p_0 \in [0,1]$ is known. The results of the previous section are loose when applied to homogeneity testing because the null distribution is not arbitrarily complex.

We introduce the local minimax framework to understand the critical separation's dependence on the null hypothesis. The local critical separation is the smallest detectable $\epsilon$-separation with respect to $\pi_0$: \begin{equation}\label{eq:local_critical_separation}
\epsilon_*(n,t,\pi_0) = \inf \left\{\epsilon : R_*(\epsilon,\pi_0) \leq \beta \right\}\quad \where \pi_0=\delta_{p_0} \period
\end{equation}

In \zcref[S]{sec:equivalence_random_and_fixed}, we reduce testing the homogeneity of random effects to testing the homogeneity of fixed effects. This equivalence reveals how the local critical separation depends on the null hypothesis's location. We defer extensive simulations of the proposed tests to \zcref[S]{sec:homogeneity_testing_simulations}.

\subsection{Reduction from random to fixed effects}\label{sec:equivalence_random_and_fixed}

Homogeneity testing of random effects closely relates to homogeneity testing of fixed effects. Consider the unobserved random effects $p_i$ of each observation in \eqref{eq:binomial_mixture}. Under the null hypothesis, all random effects are equal to $p_0$. Under the alternative, the distance between the unobserved empirical distribution of random effects $\tilde{\pi}=n^{-1}\sum_{i=1}^n\delta_{p_i}$ and the null distribution is a good proxy for $W_1(\pi,\pi_0)$, provided $\epsilon$ is not too small. \begin{restatable}{lemma}{HomogeneityReduction}\label{lemma:reduction}
Let $\delta \in (0,1)$, there exists a positive constant $C$ that depends on $\delta$ such that whenever $W_1(\pi,\delta_{p_0})\geq \epsilon \geq C/n$, it holds that
$W_1(\tilde{\pi},\delta_{p_0}) \geq \epsilon/2$ with probability at least $1-\delta$.\end{restatable} The proof can be found in \zcref[S]{sec:reduction_random_to_fixed}. Whenever $\epsilon \gtrsim n^{-1}$, the homogeneity testing problem \eqref{eq:homogeneity_testing} reduces to the fixed-effects problem studied by \citet{chhor2021}: given $Y_i=X_i|p_i \sim \Bin(t,p_i)$ for $1 \leq i \leq n$, test \begin{equation}
H_0: p_i = p_0 \for 1\leq i \leq n \quad \text{v.s.} \quad  H_1:  W_1(\tilde{\pi},\delta_{p_0}) \geq \epsilon/2. \label{eq:fixed_effects_homogeneity_testing}
\end{equation} This reduction is optimal because if $\epsilon \lesssim n^{-1}$, we can construct mixing distributions that, with constant probability, produce the same marginal measure as the null distribution, making it impossible for a valid test to distinguish them.

\begin{restatable}{lemma}{RandomEffectsLowerBound}\label{lemma:RandomEffectsLowerBound}
For hypotheses \eqref{eq:homogeneity_testing}, there exists a universal positive constant $C$ such that the local critical separation \eqref{eq:local_critical_separation} is lower-bounded by $C/n$.
\end{restatable}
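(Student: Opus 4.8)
The plan is to prove the bound by a single application of Le~Cam's two‑point method, using a $\Theta(1/n)$‑size perturbation of the null that is too faint for $n$ observations to resolve. First I would fix an arbitrary reference effect $p_0\in[0,1]$ and take whichever endpoint $p_1\in\{0,1\}$ satisfies $|p_1-p_0|\ge 1/2$; for a constant $c\in(0,1)$ to be pinned down at the end, set
\[
\pi_1 \;=\; \Bigl(1-\tfrac{c}{n}\Bigr)\delta_{p_0} \;+\; \tfrac{c}{n}\,\delta_{p_1}\;\in\;D ,
\]
so that $W_1(\pi_1,\delta_{p_0}) = \tfrac{c}{n}\,|p_1-p_0|\ge \tfrac{c}{2n}$; hence $\pi_1$ is an alternative in \eqref{eq:homogeneity_testing} at separation $\epsilon=c/(2n)$, and $(\delta_{p_0},\pi_1)$ are the two points.

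Next I would bound the total variation between the two $n$‑sample marginals by an explicit coupling rather than by the moment bound of \zcref{lemma:tvbound}, since here the perturbation is already concrete. Drawing the random effects $p_1,\dots,p_n\iid\pi_1$, the event that every $p_i$ equals $p_0$ has probability $(1-c/n)^n$, and conditionally on that event $(X_1,\dots,X_n)\sim P_{\delta_{p_0}}^n$ exactly; coupling the $\delta_{p_0}$‑sample and the $\pi_1$‑sample to coincide on that event gives, via Bernoulli's inequality,
\[
V\bigl(P_{\delta_{p_0}}^n,\,P_{\pi_1}^n\bigr)\;\le\; 1-\Bigl(1-\tfrac{c}{n}\Bigr)^{\!n}\;\le\; c ,
\]
a bound that does not involve $t$ and is uniform in $p_0$. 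This is exactly the phenomenon, anticipated just before the statement, that the alternative reproduces the null's marginal with constant probability.

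Finally I would run the standard reduction. For any valid test $\psi\in\Psi(\delta_{p_0})$ we have $P_{\delta_{p_0}}^n(\psi=1)\le\alpha$, so for every $\epsilon\le W_1(\pi_1,\delta_{p_0})$,
\[
R(\psi,\delta_{p_0},\epsilon)\;\ge\;P_{\pi_1}^n(\psi=0)\;\ge\;P_{\delta_{p_0}}^n(\psi=0)-V\bigl(P_{\delta_{p_0}}^n,P_{\pi_1}^n\bigr)\;\ge\;(1-\alpha)-c .
\]
Choosing $c=\tfrac12(1-\alpha-\beta)$, which lies in $(0,1)$ because $\beta\in(0,1-\alpha)$, makes the last expression equal to $\tfrac12(1-\alpha+\beta)>\beta$; taking the infimum over $\psi$ yields $R_*(\epsilon,\delta_{p_0})>\beta$ for all $\epsilon\le c/(2n)$, hence $\epsilon_*(n,t,\delta_{p_0})\ge c/(2n)$. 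Since $p_0$ was arbitrary, the bound holds with the constant $C=\tfrac14(1-\alpha-\beta)$, which depends only on $\alpha,\beta$ and not on $n$, $t$, or $p_0$.

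The only real subtlety — modest here — is the joint calibration across the three steps: the perturbation must be large enough ($\asymp 1/n$) that the $W_1$ gap is not washed out, yet small enough that the aggregated total variation stays strictly below $1-\alpha-\beta$; both quantities scale linearly in $c$, so a single sufficiently small $c$ suffices. One should also note that \zcref{lemma:lecam_lb} cannot be quoted verbatim, since it is phrased for the goodness‑of‑fit null class $D$ rather than for the smaller homogeneity null class of point masses; however the argument above uses only validity of $\psi$ at the single null $\delta_{p_0}$, which every valid homogeneity test has, so the two‑point bound transfers unchanged.
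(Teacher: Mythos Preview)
Your proof is correct and follows essentially the same approach as the paper: both use the contaminated null $\pi_1=(1-c/n)\delta_{p_0}+(c/n)\delta_{p_1}$ and the observation that, on the event all latent effects equal $p_0$, the $n$-sample marginals coincide. The paper packages this via its conditional lower-bound lemma (\zcref{conditional_lb}) rather than your direct coupling bound on total variation, and fixes $p_1=1$ (implicitly relying on symmetry), whereas you select the farther endpoint explicitly; these are cosmetic differences.
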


The proof is in \zcref[S]{sec:lower_bound_random_effects}. In summary, if $\epsilon \gtrsim n^{-1}$, any valid test for fixed-effects  \eqref{eq:fixed_effects_homogeneity_testing} can be used, while if $\epsilon \lesssim n^{-1}$, no valid test can be powerful. For the fixed-effects problem \eqref{eq:fixed_effects_homogeneity_testing}, the analysis can be split into two cases. When $p_0$ is near the boundaries of $[0,1]$, under the alternative hypothesis, the mixing distribution must allocate some mass away from the boundaries, which can be detected by comparing the expected and observed means. In other words, if  $p_0 \leq \epsilon/8$ and $\epsilon/2 \leq W_1(\tilde{\pi},\pi_0)$, it follows that $\epsilon/4 \leq n^{-1}\sum_{i=1}^n p_i -p_0$. A test of mean deviation can be constructed using the first Kravchuk polynomial: \begin{align}\label{eq:1stmoment_test}
\psi_1^\alpha(X) = I\left(\ T_{1,p_0}(X) > q_\alpha(P_{\pi_0},T_{1,p_0}) \right) \where T_{1,p_0}(X) = \frac{1}{n}\sum_i^n \tilde{K}_1(X_i,p_0,t)\period
\end{align} Conversely, when $p_0$ is away from the boundaries, a mixing distribution under the alternative can match the null distribution's mean. To detect such deviations, we check if the observed variance is compatible with homogeneity. That is, if $\epsilon/2 \leq W_1(\tilde{\pi},\pi_0)$, it always holds that $\epsilon^2/4 \leq \frac{1}{n}\sum_{i=1}^n (p_i-p_0)^2$. The second Kravchuk polynomial estimates the above $\ell_2$ distance without bias, allowing us to construct a test for it: \begin{align}
\psi_2^\alpha(X) = I\left(\ T_{2,p_0}(X) > q_\alpha(P_{\pi_0},T_{2,p_0})\ \right) \where T_{2,p_0}(X) = \frac{1}{n}\sum_{i=1}^n \tilde{K}_2(X_i,p_0,t) \period \label{eq:2ndmoment_test}
\end{align} Combining both the mean test \eqref{eq:1stmoment_test} with the debiased $\ell_2$ test \eqref{eq:2ndmoment_test} via a Bonferroni correction \begin{equation}
\psi_{LM}^\alpha(X) = \max\left( \psi_1^{\alpha/2}(X) \comma \psi_2^{\alpha/2}(X) \right)\label{eq:max_test}
\end{equation} leads to a test that is local minimax optimal for both fixed and random effects.

\begin{restatable}[Homogeneity testing of random effects]{theorem}{RandomEffectsHomogeneityTesting}\label{lemma:RandomEffectsHomogeneityTesting}
For hypotheses \eqref{eq:homogeneity_testing}, the test \eqref{eq:max_test} is optimal. Let Furthermore, for $t \gtrsim \sqrt{n}$, the local critical separation \eqref{eq:local_critical_separation} satisfies 
$$\epsilon_*(n,t,\pi_0) \asymp 
\max\left(\frac{1}{n}\ ,\  
\frac{\mu_{p_0}^{1/2}}{t^{1/2}n^{1/4}}\right) \where \mu_{p_0}=p_0\cdot (1-p_0).$$ Conversely, if $t \lesssim \sqrt{n}$, the local critical separation satisfies: \begin{equation}\label{eq:small_t_rates}
\epsilon_*(n,t,\pi_0) \asymp \max\left(
\frac{1}{n}\ ,\ 
\mu_{p_0}\ ,\
\frac{\mu_{p_0}^{1/2}}{t^{1/2}n^{1/4}}\right).
\end{equation}
\end{restatable}

The proof is deferred to \zcref{appx:homonegenity_testing_known_null}. Intuitively, one expects that testing the mean \eqref{eq:1stmoment_test} is easier than testing the variance \eqref{eq:2ndmoment_test}. This is reflected by \eqref{eq:small_t_rates}, where the fast testing rates near the boundaries, i.e. $\mu_{p_0}\lesssim t^{-1}n^{-1/2}$, are achieved by testing for deviations in the mean \eqref{eq:1stmoment_test}, and the slow testing rates away from the boundaries are achieved by testing for deviations in the variance \eqref{eq:2ndmoment_test}. 

\paragraph{Lower bound on the critical separation} The lower-bound arguments in \zcref{lemma:RandomEffectsHomogeneityTesting} follow similar constructions to those in \zcref[S]{sec:lower_bounds_gof}. Using  \zcref{lemma:lecam_lb}, the inequality $V(P,Q)\leq\sqrt{\chi^2(P,Q)}$ and the tensorization property of the chi-squared distance, we have that the local critical separation is lower-bounded by the $W_1$ distance among mixing distributions whose marginal measures are close in chi-squared distance. \begin{corollary}\label{cor:critical_sep_lb_by_chi2} Let $C_\alpha=1-(\beta+\alpha)$ and $\pi_0=\delta_{p_0}$. For any $\pi_1\in D$ such that $\chi^2(P_{\pi_0}^n,P_{\pi_1}^n) < C_\alpha^2$, it follows that the critical separation is lower-bounded $\epsilon_*(n,t,\pi_0) \geq W_1(\pi_1,\pi_0).$ Alternatively, the same claim holds if $\chi^2(P_{\pi_0},P_{\pi_1}) < \log\left(1+C_\alpha^2\right) / n$.
\end{corollary} Analogously to \zcref{lemma:tvbound}, we can control the chi-squared distance by looking at the moment the difference between the null and the alternative mixing distributions.

\begin{restatable}{lemma}{ChiTwoBound}\label{lemma:chi2bound} Let $p_0 \in (0,1)$, $\pi_0=\delta_{p_0}$ and $\pi_1\in D$. It holds that $$
\chi^2\left(P_{\pi_1},P_{\pi_0}\right) =  M_{p_0}(\pi_1,\pi_0).$$\end{restatable} 

The proof can be found in \zcref[S]{sec:chi2_proof}. By \zcref{lemma:chi2bound}, it is clear that the maximum separation achieved in \zcref{cor:critical_sep_lb_by_chi2} depends on the number of moments matched by $\pi_1$ and $\pi_0$. Since $\pi_0$ is a point mass, an alternative mixing distribution $\pi_1$ can only differ in the first moment or match it. If $\pi_1$ differs on the first moment, then the mean test \eqref{eq:1stmoment_test} detects them, while if $\pi_1$ matches the first moment of $\pi_0$, then the debiased Pearson's chi-squared test \eqref{eq:2ndmoment_test} detects them, which explains the optimality of the combined test \eqref{eq:max_test}.

\section{Homogeneity testing without a reference effect}\label{sec:homogeneity_testing_unkown}

In \zcref[S]{sec:homogeneity_testing}, we assume that the distribution under the null hypothesis is known. However, in statistical meta-analyses, researchers must assess the homogeneity of a treatment effect without any reference distribution. Let $S$ denote the set of all point masses supported on $[0,1]$: $S = \left\{\delta_p : p \in [0,1]\right\}$, and define the distance to the set as the shortest distance to any of its members: $
W_1(\pi,S) = \sup_{s \in S}W_1(\pi,s) \period$ We want to differentiate between the mixing distribution being a point mass and being far away from the set of point masses: \begin{align}\label{eq:homogeneity_testing_unknown}
H_0: \pi \in S \vs H_1:  W_1(\pi,S)\geq \epsilon\period
\end{align} To determine the smallest $\epsilon$ for which a powerful valid test exists, we use the global minimax framework restricted to the set $S$. Consider all tests that guarantee type I error control under the null hypothesis: $$
\Psi(S) = \bigcap_{\pi_0 \in S}\ \Psi(\pi_0) = \left\{\psi : \sup_{\pi_0 \in S} P_{\pi_0}(\psi(X) = 1) \leq \alpha \right\},$$ define the risk as their maximum type II error under the alternative hypothesis $$
R_*(\epsilon,S)= \inf_{\psi \in \Psi(S)} \sup_{W_1(\pi,S)\geq \epsilon}P_{\pi}(\psi(X) = 0),$$ and let the critical separation be the smallest $\epsilon$ such that there exists a test that controls both errors \begin{equation}\label{eq:homogeneity_critical_separation}
\epsilon_* = \inf \left\{\epsilon: R_*(\epsilon,S)\leq \beta\right\} \period
\end{equation} Intuitively, distinguishing a mixing distribution from the set of point masses is difficult if it is highly concentrated around its mean. Thus, the $\epsilon$ separation between the hypotheses relates to the smallest variance that we can detect. Under the null hypothesis, the variance of the mixing distribution is zero, $V(\pi)=0$, whereas under the alternative hypothesis,
it must be greater than $\epsilon^2$ since $$
V(\pi)=W_2^2(\pi,\delta_{m_1(\pi)})\geq W_1^2(\pi,\delta_{m_1(\pi)}) \geq W_1^2(\pi,S) \geq \epsilon^2 \period$$ Hence, \eqref{eq:homogeneity_testing_unknown} reduces to testing the variance of the mixing distribution \begin{equation}\label{eq:variance_testing}
H_0: V(\pi)=0 \vs H_1:V(\pi)\geq \epsilon^2\period
\end{equation}

In \zcref[S]{sec:conservative_test}, we develop a test by debiasing the plug-in estimator of $V(\pi)$. Although minimax optimal, this test is overly conservative, leaving room to improve type I error control. \zcref[S]{sec:not_conservative_test} introduces a debiased Cochran's chi-squared test that achieves better type I control while preserving optimality. We defer extensive simulations of the proposed tests to \zcref[S]{sec:homogeneity_testing_unkown_simulations}.

\subsection{A conservative test}\label{sec:conservative_test}

The natural starting point is to study an estimator of the variance of the mixing distribution. The following U-statistic \begin{equation}\label{eq:ustat}
\hat{V}(X) = \binom{n}{2}^{-1}\sum_{i<j}\frac{h(X_i,X_j)}{2} \where h(X,Y)=\frac{\binom{X}{2}}{\binom{t}{2}}+\frac{\binom{Y}{2}}{\binom{t}{2}}-2\frac{\binom{X}{1}}{\binom{t}{1}}\frac{\binom{Y}{1}}{\binom{t}{1}}\comma
\end{equation} is an unbiased estimator of the mixing distribution's variance. The corresponding test is defined using its maximum quantile over the null hypothesis to guarantee validity \begin{equation}\label{eq:oracle_test}
\psi_{\hat{V}}(X) = I\left(\hat{V}(X) > \sup_{\pi \in S}q_{\alpha}(P_{\pi},\hat{V}(X))\right)\period
\end{equation} The following lemma, proved in \zcref[S]{sec:proof_oracle_test}, establishes that its performance is comparable to the worst-case separation rates achieved in \zcref{lemma:RandomEffectsHomogeneityTesting}. This result is expected, as the type I error must be controlled for all distributions in $S$.

\begin{restatable}{lemma}{OracleTestRates}
\label{lemma:rates_oracle_test} For hypotheses \eqref{eq:variance_testing}, the test  \eqref{eq:oracle_test} is optimal, and the critical separation \eqref{eq:homogeneity_critical_separation} satisfies $$\epsilon_* \asymp  \max\left(n^{-1/2},t^{-1/2}n^{-1/4}\right).$$
\end{restatable}

Although the test \eqref{eq:oracle_test} is minimax optimal, it can be conservative when the mixing distribution is a point mass near the boundary of $[0,1]$. Consider the mixing distribution $\pi_0=\delta_{p_0} \in S$, and let $\mu_{p_0}=p_0\cdot (1-p_0)$. The ratio between the variance under $\pi_0$ and the maximum variance under the null depends on $\mu_{p_0}^2$, i.e. $V_{P_{\pi_0}}\left[\hat{V}(X)\right]/\sup_{\pi_0 \in S}V_{P_{\pi_0}}\left[\hat{V}(X)\right] = \mu^2_{p_0}$. Since the mean of $\hat{V}$ is always zero under the null hypothesis, the previous ratio implies that the $1-\alpha$ quantile of $\hat{V}$ varies substantially across $S$, and the decision threshold in \eqref{eq:oracle_test} is overly conservative.

\subsection{The debiased Cochran's chi-squared test}\label{sec:not_conservative_test}

A practical solution is to normalize $\hat{V}$ using an estimator of $\mu_{p_0}$. To achieve this, assume access to two independent samples of equal size $
X_i,Y_i \sim P_\pi$ for $1\leq i \leq n$, and define the following test, called the debiased Cochran's $\chi^2$ test, \begin{equation}\label{eq:truncated_test}
\psi_{R}(X) = I\left(R(X,Y) > \sup_{\pi \in S}q_{\alpha}(P_{\pi},R)\right) \where  R(X,Y) =\frac{\hat{V}(X)}{\max(\hat{\mu}(Y),\gamma)},
\end{equation} where $\hat{\mu}(Y)$ is an unbiased estimator of $\mu_{m_1(\pi)}=m_1(\pi)\cdot (1-m_1(\pi))$ and $\gamma$ is a truncation parameter: $$
\hat{\mu}(Y)= \binom{\tilde{n}}{2}^{-1}\sum_{i<j}\frac{\tilde{h}(Y_i,Y_j)}{2} \textand \tilde{h}(X,Y)= \frac{X}{t}+\frac{Y}{t}-2\frac{XY}{t^2}.$$ That is, $R(X,Y)$ avoids estimating $\mu_{m_1(\pi)}$ whenever it is small to ensure its variance remains controlled. 

Examining the variance of the statistic under a point in $S$ versus the maximum variance under $S$, we observe that they match up to constants insofar as the underlying distribution is not too close to the origin. That is, for $\pi_0=\delta_{p_0}\in S$, it follows that $V_{P_{\pi_0}}\left[R(X,Y)\right]/\sup_{\pi_0 \in S}V_{P_{\pi_0}}\left[R(X,Y)\right]\asymp  \left(\mu_{p_0}/\max\left(\mu_{p_0},\gamma\right) \right)^2$. Thus, the debiased Cochran's $\chi^2$ test is conservative only for distributions in $S$ close to the boundaries of $[0,1]$. 

The following lemma states that the debiased Cochran's $\chi^2$ test performs the same as \eqref{eq:oracle_test} up to constants. The proof appears in \zcref[S]{sec:proof_truncated_test}. The sample splitting technique simplifies the proof, but simulations indicate it is unnecessary. 

\begin{restatable}{theorem}{TruncatedTestOtimality}\label{lemma:rates_truncated_test}
For hypotheses \eqref{eq:homogeneity_testing_unknown}, the test \eqref{eq:truncated_test} with threshold $\gamma \gtrsim n^{-1}$ is optimal.
\end{restatable}


\begin{figure}[!hb]
\centering
\includegraphics[width=\linewidth]{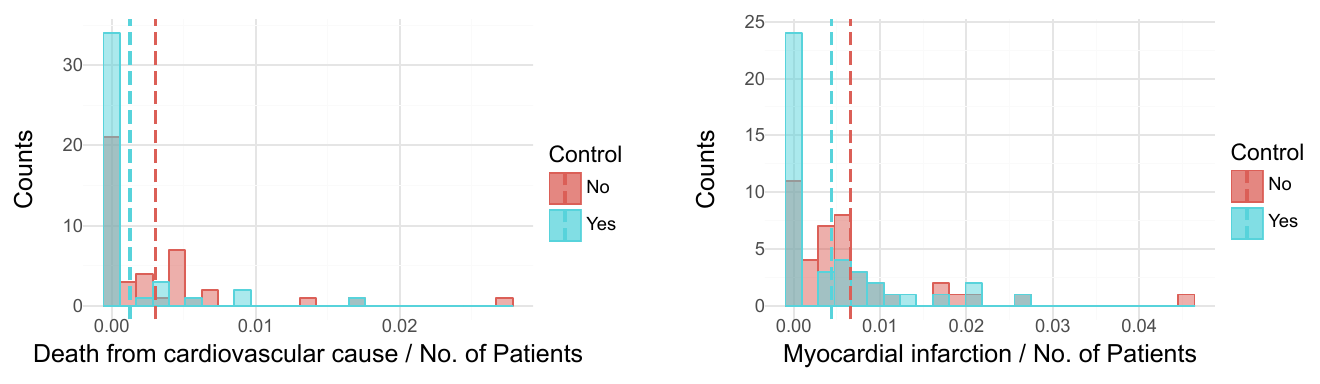}
\caption{Histograms of proportions of the proportion of patients that suffered a myocardial infarction and died from cardiovascular causes due to the application of the rosiglitazone treatment for the treatment and control groups.}
\label{fig:nissen2007_data}
\end{figure}

\section{Application to meta-analyses and model selection}\label{sec:applications}

In \zcref[S]{sec:metaanalysis}, we test the homogeneity of rare effects across a small number of studies with large number of participants. Additionally, in \zcref[S]{sec:county}, we use goodness-of-fit testing to assess the quality of different approximations to the underlying mixing distribution when the number of studies is much larger than the number of participants.

\subsection{Meta-analysis of safety concerns associated with rosiglitazone}\label{sec:metaanalysis}

\citet{nissenEffectRosiglitazoneRisk2007} conducted a meta-analysis of 42 studies, each with 40 to 2900 participants, to assess cardiovascular risks associated with the rosiglitazone treatment. \zcref[S]{fig:nissen2007_data} displays the proportion of patients who experienced a myocardial infarction (MI) or died from cardiovascular causes (D) in both the treatment and control groups. Two challenges are noteworthy: the heterogeneity in the study sizes and the low event rate. The maximum death rate in the treated group is 2\%, while over 80\% of control group studies report zero events. These low event counts suggest that standard asymptotic procedures may be unreliable, as they assume event probabilities are bounded away from zero.

\begin{table}[!th]
\centering
\begin{tabularx}{\textwidth}{lllll}
\toprule
Test & MI (Treatment) & MI (Control) & D (Treatment) & D (Control) \\
\midrule
Local Minimax \eqref{eq:max_test} & [0.005, 0.009] & [0.003, 0.007] & [0.003, 0.005] & [0.001, 0.003] \\
Mean \eqref{eq:1stmoment_test} & [0.006, 0.01] & [0.003, 0.01] & Rejected all & [0.001, 0.006] \\
Debiased $\ell_2$ \eqref{eq:2ndmoment_test} & [0.006, 0.01] & [0.003, 0.01] & [0.003, 0.006] & [0.001, 0.005] \\
$\ell_2$ & Rejected all & [0.003, 0.017] & [0.002, 0.013] & [0.001, 0.013] \\
Plug-in \eqref{eq:w1_plugin_test} & Rejected all & [0.004, 0.009] & [0.002, 0.007] & [0.001, 0.007] \\
Pearson's $\chi^2$ & Rejected all & Rejected all & [0.003, 0.006] & [0.001, 0.007] \\
Likelihood ratio & Rejected all & Rejected all & Rejected all & [0.001, 0.004] \\
\bottomrule
\end{tabularx}
\caption{$95\%$ Confidence intervals produced by inverting homogeneity tests for simple nulls.}
\label{tab:nissen_cis}
\end{table}

Following \citet{parkTestingHomogeneityProportions2019}, we test the homogeneity of myocardial infarction and death proportions across studies in both treatment and control groups.  Two approaches are considered: (I) constructing confidence intervals by inverting homogeneity tests for reference effects, and (II) obtaining P-values from homogeneity tests without reference effects.

First, we construct a $95\%$ confidence interval by inverting a test under a simple null hypothesis.  The tests in \zcref[S]{sec:homogeneity_testing} assume equal study sizes $(t)$, which does not hold here. Adjusting the methods for varying study sizes is straightforward and detailed in \zcref[S]{appx:homogeneity_testing_simple_null}. \zcref[S]{tab:nissen_cis} presents the resulting confidence intervals for each one of the methods, including additional standard tests defined in \zcref{appx:homogeneity_testing_simple_null}. Some classical tests reject all null hypotheses, indicating evidence of heterogeneity. However, the local minimax test \eqref{eq:max_test} provides non-empty confidence intervals, suggesting possible homogeneity. The confidence intervals also show improvements in tightness: the debiased $\ell_2$ test \eqref{eq:2ndmoment_test} outperforms the classical $\ell_2$ test, and the local minimax test \eqref{eq:max_test} further improves upon the debiased $\ell_2$ test by incorporating a mean test \eqref{eq:1stmoment_test}, which is useful for detecting small effects.

Second, we compute the P-values based on homogeneity tests without a reference effect (\zcref[S]{sec:homogeneity_testing_unkown}). We employ the asymptotically valid Cochran's $\chi^2$ test, a version of it valid for finite samples, and two versions of the debiased Cochran's $\chi^2$ introduced in \zcref{sec:not_conservative_test}. We refer the reader to \zcref[S]{sec:homogeneity_testing_unkown_simulations} for an analysis of their performance under simulations, and to \zcref[S]{appx:homogeneity_testing_composite_null} for their definition under varying numbers of patients per study. \zcref[S]{tab:nissen_cis} shows the obtained P-values. Cochran's $\chi^2$ test rejects the null hypothesis of homogeneity at the $\alpha=0.05$ level for all covariates except the death proportion in the control group. In contrast, its debiased versions either fail to reject or yield P-values near the decision threshold $\alpha=0.05$. These results align with the confidence interval from the local minimax test in \zcref{tab:nissen_cis}.

\begin{table}[!ht]
\centering
\begin{tabularx}{\textwidth}{lrrrr}
\toprule
Test & MI (Treat.) & MI (Control) & D (Treat.) & D (Control) \\
\midrule
Asymptotic Cochran's $\chi^2$ & 0 & 0 & 0.002 & 0.479 \\
Cochran's $\chi^2$ & 0 & 0.003 & 0.005 & 0.492 \\
Debiased Cochran's $\chi^2$ (V.1) & 0.062 & 0.285 & 0.049 & 0.286 \\
Debiased Cochran's $\chi^2$ (V.2) & 0.062 & 0.285 & 0.048 & 0.285 \\
\bottomrule
\end{tabularx}
\caption{P-values for homogeneity testing without a reference effect. }
\label{tab:nissen_pvalues}
\end{table}

\section{Discussion}

In this paper, we study goodness-of-fit testing for random binomial effects within the minimax framework. We demonstrate that combining a test based on the plug-in and debiased Pearson's $\chi^2$ tests \citep{balakrishnanHypothesisTestingDensities2019} is minimax optimal. The critical separation rates match previously established estimation rates \citep{tian2017,vinayakMaximumLikelihoodEstimation2019}, indicating that testing is as hard as estimation, a common feature of deconvolution problems. The use of Kravchuk polynomials \citep{krawtchoukGeneralisationPolynomesHermite1929} simplifies lower-bound arguments and may have broader applications in empirical Bayes methods, where bounding the distance between marginal measures via distances between mixing distributions is essential. Furthermore, the analysis of the plug-in test reveals its adaptivity to the null distribution's variance. Given the practicality of the test, it is of interest to understand when the local rates are sharp.

For homogeneity testing with respect to a reference effect, we connect the problem to its fixed-effects counterpart \citep{chhor2021} and establish how the critical separation depends on the null distribution's location. Inverting the local minimax test yields confidence intervals that exhibit adaptivity due to faster rejection of null hypotheses near the boundary. Formalizing these adaptivity properties could offer a valuable alternative to adaptive estimation-based confidence intervals.

For homogeneity testing without a reference effect, we propose a debiased version of Cochran's chi-squared test, which is minimax optimal. The debiased Cochran's chi-squared statistic has a smaller bias and variance than its classical counterpart. However, our tests do not adapt to the underlying variance of the data because they use the maximum quantile over the null hypothesis space. An alternative is to investigate \citet{bergerValuesMaximizedConfidence1994}'s method: sample split the data, use one half to construct a confidence interval for the homogeneous effect, and the other half to check whether the local minimax test \eqref{eq:max_test} rejects all members of the interval. We expect this approach to be less conservative since it avoids using the worst quantile under the null hypothesis space.

\textbf{Acknowledgments} We thank Edward H.~Kennedy, Tudor Manole, Ian Waudby-Smith, Kenta Takatsu, JungHo Lee, and Siddhaarth Sarkar for useful comments during the development of this work. The authors gratefully acknowledge funding from the National Science Foundation (DMS-2310632).

\pagebreak

\appendix

\begin{center}

{\large\bf SUPPLEMENTARY MATERIAL}

\end{center}

\addcontentsline{toc}{section}{Supplementary material}
\etocdepthtag.toc{mtappendix}
\etocsettagdepth{mtchapter}{none}
\etocsettagdepth{mtappendix}{subsection}
\etocsettagdepth{mtreferences}{section}
\renewcommand{\contentsname}{Supplementary material}
{
\renewcommand{\baselinestretch}{0.9}\normalsize
\parskip=0em
\renewcommand{\contentsname}{\normalsize Table of contents}
\tableofcontents
}


\section{Kravchuk polynomials}\label{sec:kravchuk}

Let $m\in\{0,\dots,t\}$ and $p\in(0,1)$, the $m$-th Kravchuk polynomial \citep{krawtchoukGeneralisationPolynomesHermite1929,szegoOrthogonalPolynomials1975,dominiciAsymptoticAnalysisKrawtchouk2005} is given by
\begin{equation}\label{eq:m_kravchuk}
K_m(x,p,t) = \sum_{v=0}^m (-1)^{m-v} \binom{t-x}{m-v}\binom{x}{v}p^{m-v}(1-p)^v\period
\end{equation} We also define the $m$-th normalized Kravchuk polynomials $$\tilde{K}_m(x,p,t) = \binom{t}{m}^{-1}K_m(x,p,t).$$ They are orthogonal under the binomial distribution \citep{szegoOrthogonalPolynomials1975}: \begin{equation} 
E_{X\sim \text{Bin}(t,p)}[\tilde{K}_n(X,p,t)\cdot \tilde{K}_m(X,p,t)] = \binom{t}{n}^{-1}\mu_p^n \cdot I(n=m)  \where \mu_p = p\cdot(1-p)\comma \label{eq:kravhuk_orthogonality}
\end{equation} and they provide unbiased estimates of centered moments $$
E_{X \sim \text{Bin}(t,q)} \left[\tilde{K}_m(X,p,t)\right] = (-1)^m \cdot (p-q)^m \for m\geq 1 \period$$

The generating series of the Kravchuk polynomials implies a simple local expansion of the Bernstein basis. The generating series is, see equation 2.82.4 of \citet{szegoOrthogonalPolynomials1975}, \begin{equation}\label{eq:kravchuk_generating_series}
\sum_{j=0}^t K_j(x,p,t) \cdot w^j = (1 + (1-p) \cdot w)^x \cdot (1-p \cdot w)^{t-x} \quad \for x \in \{0,\dots,t\}.
\end{equation} Hence, for $p\in (0,1)$, it holds that \begin{equation}\label{eq:local_expansion_no_center}
\frac{B_{j,t}(u+p)}{B_{j,t}(p)} = \left(\frac{u}{p} + 1\right)^j \cdot \left(1 - \frac{u}{1-p} \right)^{t-j} = \sum_{m=0}^t K_m(j,p,t) \cdot \frac{u^m}{p^m(1-p)^m} \period
\end{equation} A simple change of variables leads to the following expansion \begin{equation}\label{eq:local_expansion}
B_{j,t}(u) = B_{j,t}(p) \cdot \sum_{m=0}^t K_m(j,p,t) \cdot \frac{(u-p)^m}{\mu_p^m} \quad \for 0<p<1,
\end{equation} which is the Taylor polynomial expansion since \begin{equation}
B_{j,t}^{(l)}(p) = B_{j,t}(p) \cdot \frac{K_l(j,p,t)}{\mu_p^l} \cdot (l)! \quad \for 0<p<1 \textand l \in \{0,\dots,t\} \period
\end{equation}

\subsection{Bounding the total variation distance by moment differences}\label{sec:tvbound_proof}

\begin{lemma}[Extension of \zcref{lemma:tvbound}]\label{lemma:tvbound_ext} Let $\pi_0$ and $\pi_1$ be two mixing distributions supported on the $[0,1]$ interval. For $p\in (0,1)$, it follows that
\begin{align}\label{eq:tvbound}
\sqrt{p^t \wedge (1-p)^t}\cdot \frac{M_p(\pi_1,\pi_0)}{2} \leq V\left(P_{\pi_1},P_{\pi_0}\right) \leq \frac{M_p(\pi_1,\pi_0)}{2}\period
\end{align}
\end{lemma}
\begin{proof}[Proof of \zcref{lemma:tvbound_ext}]
We prove the lemma using \eqref{eq:local_expansion_no_center} rather than \eqref{eq:local_expansion}. A simple change of variables implies the lemma's statement.
\begin{align}
&V\left(E_{u\sim\pi_1}[\text{Bin}(t,u+p)],E_{u\sim\pi_0}[\text{Bin}(t,u+p)]\right) \\
&= \frac{1}{2}\cdot \sum_{j=0}^t\ \left|E_{u\sim\pi_1}[B_{j,t}(u+p)]-E_{u\sim\pi_0}[B_{j,t}(u+p)]\right|\\
&= \frac{1}{2}\cdot \sum_{j=0}^t B_{j,t}(p) \cdot \left|\sum_{m=1}^t K_m(j,p,t) \cdot \frac{\Delta_m}{p^m(1-p)^m}\right|\label{eq:last_step}\end{align} where in the last equality we used \eqref{eq:local_expansion_no_center} and $\Delta_m = E_{\pi_1}[u^m]-E_{\pi_0}[u^m]$. It follows that
\begin{align}
&V\left(E_{u\sim\pi_1}[\text{Bin}(t,u+p)],E_{u\sim\pi_0}[\text{Bin}(t,u+p)]\right)\\
&\leq  \frac{1}{2}\cdot \left[\sum_{j=0}^t B_{j,t}(p) \cdot \left|\sum_{m=1}^t K_m(j,p,t) \cdot \frac{\Delta_m}{p^m(1-p)^m}\right|^2\right]^{1/2} &&\text{by Jensen's inequality}\\
&=  \frac{1}{2}\cdot \left[\sum_{m=1}^t \binom{t}{m} \cdot \frac{\Delta_m^2}{p^m(1-p)^m}\right]^{1/2} &&\text{by orthogonality }\eqref{eq:kravhuk_orthogonality}
\end{align} The lower-bound proceeds similarly. By \eqref{eq:last_step} and norm monotonicity, it follows that
\begin{align}
V&\left(E_{u\sim\pi_1}[\text{Bin}(t,u+p)],E_{u\sim\pi_0}[\text{Bin}(t,u+p)]\right)\\
&\geq  \frac{1}{2}\cdot \left[\sum_{j=0}^t B_{j,t}^2(p) \cdot \left|\sum_{m=1}^t K_m(j,p,t) \cdot \frac{\Delta_m}{p^m(1-p)^m}\right|^2\right]^{1/2}
\end{align} Since $B_{j,t}(p)\geq p^t \wedge (1-p)^t$, it holds that \begin{align}
&V\left(E_{u\sim\pi_1}[\text{Bin}(t,u+p)],E_{u\sim\pi_0}[\text{Bin}(t,u+p)]\right)\\
&\geq  \frac{\sqrt{p^t \wedge (1-p)^t}}{2}\cdot \left[\sum_{j=0}^t B_{j,t}(p) \cdot \left|\sum_{m=1}^t K_m(j,p,t) \cdot \frac{\Delta_m}{p^m(1-p)^m}\right|^2\right]^{1/2} \\
&=  \frac{\sqrt{p^t \wedge (1-p)^t}}{2}\cdot \left[\sum_{m=1}^t \binom{t}{m} \cdot \frac{\Delta_m^2}{p^m(1-p)^m}\right]^{1/2}
\end{align} where we used the orthogonality of the Kravchuk polynomials in the last step.
\end{proof}

\subsection{Bounding the chi-squared distance by moment differences}\label{sec:chi2_proof}

\begin{lemma}\label{lemma:abstract_chi2_bound} Let $p \in (0,1)$, if \begin{equation}\label{eq:lb_condition}
E_{u\sim\pi_0}B_{j,t}(u) \geq C^{-1} \cdot B_{j,t}(p) \quad \for 1\leq j \leq t
\end{equation} then \begin{equation}\label{eq:lb_result}
\chi^2\left(P_{\pi_0},P_{\pi_1}\right) \leq C \cdot M_p(\pi_1,\pi_0)\period
\end{equation} Furthermore, \eqref{eq:lb_result} strengthens to equality if \eqref{eq:lb_condition} holds with equality.
\end{lemma}\begin{proof}[Proof of \zcref{lemma:abstract_chi2_bound}]
For any $0 <p <1$, it holds that
\begin{align}
\chi^2\left(P_{\pi_0},P_{\pi_1}\right)
&= \sum_{j=0}^t\ \frac{\left|E_{u\sim\pi_1}[B_{j,t}(u)]-E_{u\sim\pi_0}[B_{j,t}(u)]\right|^2}{E_{u\sim\pi_0}B_{j,t}(u)}\\
&= \sum_{j=0}^t \frac{B_{j,t}^2(p) \cdot \left|\sum_{m=1}^t K_m(j,p) \cdot \frac{\Delta_m(\pi_1,\pi_0)}{p^m(1-p)^m}\right|^2}{E_{\pi_0}B_{j,t}(u)} &&\text{by eq. \eqref{eq:local_expansion}}
\end{align} where $
\Delta_m(\pi_1,\pi_0) = E_{u\sim\pi_1}[u-p]^m-E_{u\sim\pi_0}[u-p]^m\period$ Then, by orthogonality of the Kravchuk polynomials \eqref{eq:kravhuk_orthogonality} and the assumption \eqref{eq:lb_condition}, we have that \begin{equation}
\chi^2\left(P_{\pi_0},P_{\pi_1}\right)
\leq C \cdot \sum_{m=1}^t \binom{t}{m} \cdot \frac{\Delta_m^2(\pi_1,\pi_0)}{\mu_p^m}\period
\end{equation}\end{proof}

\ChiTwoBound*
\begin{proof}[Proof of \zcref{lemma:chi2bound}]
For the particular case where $\pi_0$ is a point mass, i.e. $\pi=\delta_{p_0}$, choosing $p=p_0$ implies that $
E_{u\sim\pi_0}B_{j,t}(u) = B_{j,t}(p_0)$ for $1\leq j \leq t\period$ The statement follows from \zcref{lemma:abstract_chi2_bound}.
\end{proof}

We note that \zcref[S]{lemma:chi2bound} can be slightly generalized. \zcref[S]{prop:bernstein_deviation} and \zcref{lemma:general_chi2_bound}, show that one can replace $\pi_0=\delta_{p_0}$ in \zcref{lemma:chi2bound} by any distribution supported on $[p_0-\delta,p_0+\delta]$ where $\delta \lesssim p_0/t$.

\begin{proposition}\label{prop:bernstein_deviation}
For $p\in (0,1/2]$ and $0 \leq j \leq t$, it holds that \begin{equation}
\left|\frac{B_{j,t}(u)}{B_{j,t}(p)}-1\right|
\leq \left(1 + \left(\frac{1-p}{p}\right)^{j/t} \cdot \frac{|u-p|}{1-p}\right)^t - 1 \leq \left(1 + \frac{|u-p|}{p}\right)^t - 1 \period
\end{equation}
\end{proposition}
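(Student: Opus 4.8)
\emph{Proof strategy.} The plan is to reduce everything to the elementary identity
\[
\frac{B_{j,t}(u)}{B_{j,t}(p)} \;=\; \left(\frac{u}{p}\right)^{\!j}\left(\frac{1-u}{1-p}\right)^{\!t-j} \;=\; \left(1+\frac{u-p}{p}\right)^{\!j}\left(1-\frac{u-p}{1-p}\right)^{\!t-j},
\]
and then to estimate the deviation of this product from $1$ by expanding both factors with the binomial theorem. The last ``$\le$'' between the two displayed upper bounds is immediate and should be dispatched first: since $\frac{1-p}{p}\ge 1$ and $j/t\le 1$, we have $\bigl(\tfrac{1-p}{p}\bigr)^{j/t}\le \tfrac{1-p}{p}$, so the middle parenthesis is at most the right one and hence so are their $t$-th powers.

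For the rightmost inequality I would multiply out
\[
\frac{B_{j,t}(u)}{B_{j,t}(p)} - 1 \;=\; \sum_{\substack{0\le a\le j,\ 0\le b\le t-j\\ a+b\ge 1}} \binom{j}{a}\binom{t-j}{b}\,\frac{(-1)^{b}\,(u-p)^{a+b}}{p^{a}(1-p)^{b}},
\]
take absolute values inside the sum, and use $p\le 1/2$, i.e.\ $1-p\ge p$, to replace $p^{-a}(1-p)^{-b}$ by $p^{-(a+b)}$. Collecting terms of equal total degree $n=a+b$ and invoking Vandermonde's identity $\sum_{a+b=n}\binom{j}{a}\binom{t-j}{b}=\binom{t}{n}$ yields
$\bigl|\tfrac{B_{j,t}(u)}{B_{j,t}(p)}-1\bigr|\le \sum_{n\ge 1}\binom{t}{n}(|u-p|/p)^{n}=(1+|u-p|/p)^{t}-1$, which is the claim.

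For the sharper middle bound I would keep track of the asymmetry between $p$ and $1-p$. Writing $r=j/t$, the geometric-mean scale $p^{r}(1-p)^{1-r}$ is the natural normalization: set $\lambda=\bigl(\tfrac{1-p}{p}\bigr)^{r}\tfrac1{1-p}=p^{-r}(1-p)^{-(1-r)}$, so that the target is $\bigl|\tfrac{B_{j,t}(u)}{B_{j,t}(p)}-1\bigr|\le (1+\lambda|u-p|)^{t}-1$. I would argue by cases on whether $\tfrac{B_{j,t}(u)}{B_{j,t}(p)}\ge 1$ or $<1$: in the first case only a one-sided upper bound on the ratio is needed; in the second case one combines a lower bound on the ratio with the elementary symmetric inequality $(1+x)^{t}+(1-x)^{t}\ge 2$ and $x\le 1$ to get $1-\tfrac{B_{j,t}(u)}{B_{j,t}(p)}\le 1-(1-\lambda|u-p|)^{t}\le (1+\lambda|u-p|)^{t}-1$, the regime $\lambda|u-p|>1$ being trivial since then $(1+\lambda|u-p|)^{t}>2$.

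The main obstacle is exactly this middle inequality. A single weighted AM--GM (or H\"older) step applied to the two nonnegative factors with weights $r$ and $1-r$ only controls $\tfrac{B_{j,t}(u)}{B_{j,t}(p)}-1$ by $(u-p)\tfrac{r-p}{p(1-p)}$, and bounding $\bigl|\tfrac{r-p}{p(1-p)}\bigr|$ by $\lambda$ reduces to $|r-p|\le p^{1-r}(1-p)^{r}$, which is not obvious and is where the precise regime of $u$ and $j$ must be exploited; likewise the crude per-degree estimate used for the rightmost bound is too lossy here once the $p$-versus-$(1-p)$ weights are kept. Getting the geometric-mean tilting to interact correctly with the sign of $u-p$ — presumably via the binomial expansion above together with $(1+x)^{t}+(1-x)^{t}\ge 2$ rather than a lone AM--GM step — is the crux; the rightmost inequality and the comparison of the two upper bounds are routine by contrast.
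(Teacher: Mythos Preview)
Your treatment of the rightmost bound and of the comparison between the two displayed upper bounds is correct, and it is essentially the paper's argument in different clothing: the paper writes the ratio via the Kravchuk expansion $\frac{B_{j,t}(u)}{B_{j,t}(p)}-1=\sum_{m\ge 1}K_m(j,p,t)\,(u-p)^m/\mu_p^m$, bounds $|K_m|$ termwise by $\sum_v\binom{t-j}{m-v}\binom{j}{v}p^{m-v}(1-p)^v$, and then uses Vandermonde --- which, after regrouping, is exactly your double binomial sum.

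For the sharper middle bound the paper's route is different from anything you tried: it rewrites $|K_m|\le \binom{t}{m}p^m\sum_v w_v\,c^{v}$ with $c=\tfrac{1-p}{p}\ge 1$ and hypergeometric weights $w_v=\binom{j}{v}\binom{t-j}{m-v}/\binom{t}{m}$, computes the hypergeometric mean $\sum_v v\,w_v=jm/t$, and then invokes ``Jensen's inequality'' to conclude $\sum_v w_v\,c^{v}\le c^{jm/t}$. But $v\mapsto c^{v}$ is convex for $c\ge 1$, so Jensen gives $\ge$, not $\le$; the step is applied in the wrong direction. Your instinct that the middle inequality is the real obstacle was therefore right for a stronger reason than you realized: the middle bound is false as stated. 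Take $t=2$, $j=1$, $p=\tfrac{1}{10}$, $u=\tfrac{3}{20}$; then
\[
\Bigl|\tfrac{B_{1,2}(3/20)}{B_{1,2}(1/10)}-1\Bigr|=\tfrac{5}{12}=\tfrac{15}{36},
\qquad
\Bigl(1+9^{1/2}\cdot\tfrac{1/20}{9/10}\Bigr)^{2}-1=\Bigl(\tfrac{7}{6}\Bigr)^{2}-1=\tfrac{13}{36},
\]
and $15/36>13/36$. At first order this is precisely the failure of $|r-p|\le p^{1-r}(1-p)^{r}$ that you had already isolated (with $r=\tfrac12$, $p=\tfrac{1}{10}$ one gets $0.4>0.3$). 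Only the weaker rightmost bound survives, and your argument for it is sound; fortunately that is also the only part of the proposition the paper's subsequent corollary actually uses.
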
 \begin{proof}[Proof of \zcref{prop:bernstein_deviation}]
By the Taylor expansion \eqref{eq:local_expansion}, it holds that \begin{equation}\label{eq:bernstein_deviation}
\left|\frac{B_{j,t}(u)}{B_{j,t}(p)}-1\right| \leq \sum_{m=1}^t |K_m(j,t,p)| \cdot \frac{|u-p|^m}{\mu_p^m}\period
\end{equation} Furthermore, by \eqref{eq:m_kravchuk}, we have that \begin{equation}
|K_m(j,t,p)|\leq \binom{t}{m} \cdot p^m \cdot \sum_{v=0}^m \left(\frac{1-p}{p}\right)^v \cdot w_v
\end{equation} where \begin{equation}
w_v = \frac{\binom{t-j}{m-v}\binom{j}{v}}{\binom{t}{m}} \geq 0 \textand \sum_{v=0}^m w_v = 1
\end{equation} due to the Chu-Vandermonde's identity. Since $(1-p)/p\geq 1$, by Jensen's inequality, we have that \begin{align}
|K_m(j,t,p)|&\leq \binom{t}{m} \cdot p^m \cdot \exp\left\{\log\left(\frac{1-p}{p}\right) \cdot \sum_{v=0}^t v \cdot w_v\right\}\\
&= \binom{t}{m} \cdot p^m \cdot \left(\frac{1-p}{p}\right)^{j \cdot \binom{t-1}{m-1}/\binom{t}{m}}\period
\end{align} Pluging the inequality in \eqref{eq:bernstein_deviation}, we get \begin{align}
\left|\frac{B_{j,t}(u)}{B_{j,t}(p)}-1\right| &\leq \sum_{m=1}^t \binom{t}{m} \cdot \left(\frac{|u-p|}{1-p}\right)^m \cdot \left(\frac{1-p}{p}\right)^{j \cdot \binom{t-1}{m-1}/\binom{t}{m}}\\
&= \left(1 + \left(\frac{1-p}{p}\right)^{j/t} \cdot \frac{|u-p|}{1-p}\right)^t - 1\period
\end{align}
\end{proof}

The following bound is an immediate consequence.

\begin{restatable}{corollary}{GeneralChiTwoBound}\label{lemma:general_chi2_bound} Let $p\in(0,1/2]$ and $\pi_0$ be a mixing distribution such that $|u-p|\leq \delta$ almost surely for $u \sim \pi_0$ where $\delta \leq \log(2-C) \cdot p/t$ for some $C \in (0,1]$. Then, for any mixing distribution $\pi_1$, it holds that $
\chi^2(P_{\pi_0},P_{\pi_1}) \leq C^{-1} \cdot M_p(\pi_1,\pi_0)\period$
\end{restatable}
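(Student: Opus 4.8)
The plan is to reduce the statement to the hypothesis of \zcref{lemma:abstract_chi2_bound}. That lemma asserts that if $E_{u\sim\pi_0}[B_{j,t}(u)] \geq \kappa^{-1}\cdot B_{j,t}(p)$ for all $1\leq j\leq t$, then $\chi^2(P_{\pi_0},P_{\pi_1}) \leq \kappa\cdot M_p(\pi_1,\pi_0)$. Applying it with $\kappa = C^{-1}$, the claimed bound $\chi^2(P_{\pi_0},P_{\pi_1}) \leq C^{-1}\cdot M_p(\pi_1,\pi_0)$ follows as soon as I can show $E_{u\sim\pi_0}[B_{j,t}(u)] \geq C\cdot B_{j,t}(p)$ for every $1\leq j\leq t$. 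So the whole proof amounts to establishing this one pointwise-then-averaged lower bound on the Bernstein values under $\pi_0$.

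To get it, I would fix $u$ in the support of $\pi_0$, so that $|u-p|\leq\delta$ by assumption, and invoke \zcref{prop:bernstein_deviation} (applicable since $p\in(0,1/2]$), which gives, for every $0\leq j\leq t$,
\[\left|\frac{B_{j,t}(u)}{B_{j,t}(p)}-1\right| \leq \left(1+\frac{|u-p|}{p}\right)^t-1 \leq \left(1+\frac{\delta}{p}\right)^t-1\period\]
Next I would control the right-hand side using the elementary inequality $(1+x)^t\leq e^{tx}$ together with the hypothesis $\delta\leq\log(2-C)\cdot p/t$:
\[\left(1+\frac{\delta}{p}\right)^t \leq \exp\!\left(\frac{t\delta}{p}\right) \leq \exp\bigl(\log(2-C)\bigr) = 2-C\comma\]
so that $\bigl|B_{j,t}(u)/B_{j,t}(p)-1\bigr|\leq 1-C$, and hence $B_{j,t}(u)\geq C\cdot B_{j,t}(p)$ pointwise on $\supp(\pi_0)$.

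Taking expectation over $u\sim\pi_0$ preserves the inequality and yields $E_{u\sim\pi_0}[B_{j,t}(u)]\geq C\cdot B_{j,t}(p)$ for all $j$, i.e. condition \eqref{eq:lb_condition} with constant $\kappa = C^{-1}$; \zcref{lemma:abstract_chi2_bound} then delivers the corollary. I do not anticipate a genuine obstacle here — the content is entirely the observation that the threshold $\delta\leq\log(2-C)\cdot p/t$ is calibrated precisely so that the Bernstein-ratio deviation from \zcref{prop:bernstein_deviation} stays below $1-C$. The only points requiring care are tracking the reciprocal relationship between the hypothesis constant and the conclusion constant in \zcref{lemma:abstract_chi2_bound}, and checking that $C\in(0,1]$ makes $\log(2-C)\geq 0$ so that the stated range for $\delta$ is non-degenerate, with $C=1$ forcing $\delta=0$ and recovering \zcref{lemma:chi2bound} as a special case.
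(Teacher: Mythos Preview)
Your proposal is correct and follows essentially the same route as the paper: invoke \zcref{prop:bernstein_deviation} to obtain $B_{j,t}(u)/B_{j,t}(p)\geq 2-(1+\delta/p)^t\geq 2-\exp(t\delta/p)\geq C$ pointwise on the support of $\pi_0$, then average and apply \zcref{lemma:abstract_chi2_bound} with the constant $C^{-1}$. The paper's proof is just a terser one-line version of exactly this computation.
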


\begin{proof}[Proof of \zcref{lemma:general_chi2_bound}]

By \zcref{prop:bernstein_deviation}, it holds that \begin{equation}
\frac{B_{j,t}(u)}{B_{j,t}(p)}\geq2 - \left(1 + \frac{|u-p|}{p}\right)^t \geq 2 - \exp\left\{\frac{t \delta}{p}\right\}\geq C
\end{equation} The statement follows by \zcref{lemma:abstract_chi2_bound}.
\end{proof}

\subsection{Expectations and variances of 1st and 2nd Kravchuk polynomials}

In the next two remarks, we adopt the following notation: \begin{equation}
\Delta = q-p \comma \mu_{p} = p(1-p) \textand d_l = \int \Delta^l\ d\pi(p)
\end{equation}

\begin{remark}[First Kravchuk polynomial]\label{sec:1stKravchuk} The expectation and second moment  of the first Kravchuk polynomial under a binomial distribution are given by
\begin{align}
E_{X \sim \text{Bin}(t,q)} \left[\tilde{K}_1(X,p,t)\right] &= \Delta\\
E_{X\sim \text{Bin}(t,q)}[\tilde{K}_1(X,p,t)^2] &= \Delta^2 + \frac{\mu_q}{t}
\end{align} Using them, we compute its variance under the binomial distribution. \begin{align}
V_{X\sim \text{Bin}(t,p)}[\tilde{K}_1(X,p,t)] &= E_{X\sim \text{Bin}(t,p)}[\tilde{K}_1(X,p,t)^2] = \frac{\mu_p}{t}\\
V_{X\sim \text{Bin}(t,q)}[\tilde{K}_1(X,p,t)] &= \frac{\mu_q}{t}= \frac{\mu_p}{t} + (1 - 2  p) \frac{\Delta}{t} - \frac{\Delta^2}{t}\leq\frac{\mu_p}{t} +  \frac{\Delta}{t} - \frac{\Delta^2}{t}
\end{align}

The expectation of the second Kravchuk polynomial under a mixture of binomials is given by
\begin{align}
E_{X\sim P_\pi}[\tilde{K}_1(X,p,t)] &= d_1
\end{align}

We proceed to compute its variance under a binomial mixture.
\begin{align}
V_{q\sim\pi}[E_{X \sim \text{Bin}(t,q)}[\tilde{K}_1(X,p,t)]] &= d_2 - d_1^2 \\
E_{q\sim\pi}[V_{X \sim \text{Bin}(t,q)}[\tilde{K}_1(X,p,t)]] &\leq \frac{\mu_p}{t} + \frac{d_1}{t}-\frac{d_2}{t}
\end{align} Consequently, \begin{align}
V_{X\sim P_\pi}[\tilde{K}_1(X,p,t)] &\leq \frac{\mu_p}{t} + \frac{d_1}{t}+ (1-\frac{1}{t})\cdot d_2 - d_1^2
\end{align}
\end{remark}

\begin{remark}[Second Kravchuk polynomial]\label{sec:2ndKravchuk}

The expectation and second moment  of the second Kravchuk polynomial under a binomial distribution are given by
\begin{align}
E_{X \sim \text{Bin}(t,q)} \left[\tilde{K}_2(X,p,t)\right] &= \Delta^2\\
E_{X\sim \text{Bin}(t,q)}[\tilde{K}_2(X,p,t)^2] &= \Delta^4 + 2 \frac{\mu_q^2}{t-1} + 2 \frac{\mu_q}{t} \cdot \left[2 p^2 - (1 + 4 p) q + 3 q^2\right]
\end{align} Using them, we compute its variance under the binomial distribution. \begin{align}
V_{X\sim \text{Bin}(t,p)}[\tilde{K}_2(X,p,t)] &= E_{X\sim \text{Bin}(t,p)}[\tilde{K}_2(X,p,t)^2] = \frac{\mu_p^2}{(t-1)t}\\
V_{X\sim \text{Bin}(t,q)}[\tilde{K}_2(X,p,t)] &= 2 \frac{\mu_q^2}{t-1} + 4p^2 \frac{q}{t} - 2(1 + 2p(2 + p)) \frac{q^2}{t} + 8(1 + p) \frac{q^3}{t} -
 6 \frac{q^4}{t}\\
&= \frac{4(1-2p)(t-2)}{t(t-1)} \cdot \Delta^3 \\
&+ \frac{2}{t}\left[\frac{1}{t-1}-2\right] \cdot \Delta^4 \\
&+ \frac{4(1-2p)}{(t-1)t}\cdot \mu_p \cdot \Delta \\
&+ \frac{2}{(t-1)t}\cdot \mu_p^2 \\
&+ \frac{2(1+2(t-4)\mu_p)}{(t-1)t} \cdot \Delta^2 \\
&\leq
4\cdot \frac{\mu_p^2 }{t^2}
+ 8\cdot \frac{\mu_p}{t^2} \cdot \Delta
+ 2\left[\frac{1}{t^2} + \frac{\mu_p}{t}\right] \cdot \Delta^2 + 4 \cdot \frac{\Delta^3 }{t} - 2 \cdot \frac{\Delta^4}{t}
\end{align}
\end{remark}

The expectation of the second Kravchuk polynomial under a mixture of binomials is given by
\begin{align}
E_{X\sim P_\pi}[\tilde{K}_2(X,p,t)] &= d_2
\end{align} We compute its variance under a binomial mixture. \begin{align}
V_{q\sim\pi}[E_{X \sim \text{Bin}(t,q)}[\tilde{K}_2(X,p,t)]] &= d_4 - d_2^2 \\
E_{q\sim\pi}[V_{X \sim \text{Bin}(t,q)}[\tilde{K}_2(X,p,t)]] &\leq 4\cdot \frac{\mu_p^2 }{t^2}
+ 8\cdot \frac{\mu_p}{t^2} \cdot d_1
+ 2\left[\frac{1}{t^2} + \frac{\mu_p}{t}\right] \cdot d_2 + 4 \cdot \frac{d_3 }{t} - 2 \cdot \frac{d_4}{t}
\end{align} Consequently, \begin{align}
V_{X\sim P_\pi}[\tilde{K}_2(X,p,t)] &\leq 4\cdot \frac{\mu_p^2 }{t^2}
+ 8\cdot \frac{\mu_p}{t^2} \cdot d_1
+ 2\left[\frac{1}{t^2} + \frac{\mu_p}{t}\right] \cdot d_2 + 4 \cdot \frac{d_3 }{t} + (1-\frac{2}{t}) \cdot d_4 - d_2^2
\end{align}

\section{Wasserstein metric}\label{sec:wasserstein}

\subsection{Translation of measures}

\begin{lemma}\label{lemma:w1_translation}
Let $\tilde{\pi}_1$ and $\tilde{\pi}_0$ be two distributions supported on $[-1,1]$ that share they first $k$ moments \begin{equation}
	E_{\tilde{\pi}_1}[X^l]-E_{\tilde{\pi}_0}[X^l]=0\quad \for l\leq k
\end{equation} There exists $\pi_1$ and $\pi_0$ supported on $[a,b]$ that share they first $k$ moments and satisfy \begin{equation}
	W_1(\pi_0,\pi_1) =  \frac{b-a}{2} \cdot W_1(\tilde{\pi}_0,\tilde{\pi}_1)
\end{equation}
\end{lemma}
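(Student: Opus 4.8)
The plan is to construct $\pi_0$ and $\pi_1$ as the pushforwards of $\tilde\pi_0$ and $\tilde\pi_1$ under the increasing affine bijection $\phi : [-1,1] \to [a,b]$ defined by $\phi(x) = \tfrac{a+b}{2} + \tfrac{b-a}{2}\,x$. If $a = b$ the claim is trivial (both sides vanish), so assume $a < b$ and set $\pi_i = \phi_\#\tilde\pi_i$. Since $\phi$ maps $[-1,1]$ onto $[a,b]$, both $\pi_0$ and $\pi_1$ are supported on $[a,b]$, which settles the support requirement.

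Next I would verify that $\pi_0$ and $\pi_1$ still share their first $k$ moments. For each $1 \le l \le k$, the map $x \mapsto \phi(x)^l$ is a polynomial in $x$ of degree $l$, say $\phi(x)^l = \sum_{j=0}^l \alpha_{l,j}\,x^j$ with coefficients depending only on $a,b$. Hence $E_{\pi_i}[Y^l] = E_{\tilde\pi_i}[\phi(X)^l] = \sum_{j=0}^l \alpha_{l,j}\,E_{\tilde\pi_i}[X^j]$, and because the moments $E_{\tilde\pi_0}[X^j]$ and $E_{\tilde\pi_1}[X^j]$ agree for all $j \le l \le k$, so do $E_{\pi_0}[Y^l]$ and $E_{\pi_1}[Y^l]$.

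For the Wasserstein identity I would use the Kantorovich--Rubinstein dual \eqref{eq:w1_dual}. A function $g$ lies in $\Lip_1[a,b]$ iff $g\circ\phi$ lies in $\Lip_{(b-a)/2}[-1,1]$ (since $\phi$ is affine with slope $(b-a)/2$), and $g \mapsto g\circ\phi$ is a bijection between these two classes, with inverse $h \mapsto h\circ\phi^{-1}$. Therefore
\begin{align}
W_1(\pi_0,\pi_1) &= \sup_{g \in \Lip_1[a,b]} E_{\pi_1}[g] - E_{\pi_0}[g]
= \sup_{g \in \Lip_1[a,b]} E_{\tilde\pi_1}[g\circ\phi] - E_{\tilde\pi_0}[g\circ\phi]\\
&= \sup_{h \in \Lip_{(b-a)/2}[-1,1]} E_{\tilde\pi_1}[h] - E_{\tilde\pi_0}[h]
= \frac{b-a}{2}\, W_1(\tilde\pi_0,\tilde\pi_1)\period
\end{align}
Alternatively, and perhaps more transparently, one can argue via couplings: $\gamma \mapsto (\phi\times\phi)_\#\gamma$ is a bijection from couplings of $(\tilde\pi_0,\tilde\pi_1)$ onto couplings of $(\pi_0,\pi_1)$, and $|\phi(x)-\phi(y)| = \tfrac{b-a}{2}|x-y|$, so every transport cost scales by exactly $(b-a)/2$ and the two infima correspond.

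There is no substantive obstacle: this is a routine change-of-variables argument. The only two points deserving a line of care are that affine maps carry polynomials of degree $\le k$ to polynomials of degree $\le k$ (so moment matching is preserved) and that post-composition with $\phi$ rescales Lipschitz constants by the slope of $\phi$ (so $W_1$ scales by $(b-a)/2$); both are elementary.
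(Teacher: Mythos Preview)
Your proposal is correct and follows essentially the same approach as the paper: define $\pi_i$ as the pushforward of $\tilde\pi_i$ under the affine bijection between $[-1,1]$ and $[a,b]$, check that moment matching is preserved because affine maps send degree-$l$ polynomials to degree-$l$ polynomials, and obtain the $W_1$ scaling via the Kantorovich--Rubinstein dual and a change of variable. Your treatment of the moment-preservation step is in fact slightly more careful than the paper's, and your alternative coupling argument is a nice addition.
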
 \begin{proof}[Proof of \zcref{lemma:w1_translation}]
Define the linear transformation from $[a,b]$ to $[-1,1]$\begin{equation}
    m(x) = \frac{2 \cdot x - (b+a)}{b-a}
\end{equation} and the shifted distributions \begin{equation}
    \pi_i(x) = m'(x) \cdot \tilde{\pi}_i \circ m(x).
\end{equation} Then the first $k$ moments of $\pi_0$ and $\pi_1$ are the same since \begin{equation}
    \int_{[a,b]} x^k\ d\pi_i = \int_{[-1,1]} x^k\ d\tilde{\pi}_i \quad \for i \in \{0,1\}
\end{equation} and \begin{align}
W_1(\pi_0,\pi_1) &= \sup_{f \in \Lip_1[a,b]} \int_a^b f\ d\pi_1 - d\pi_0\\
&= \sup_{f \in \Lip_1[a,b]} \int_{-1}^1 f \circ m^{-1}\ d\tilde{\pi}_1 - d\tilde{\pi}_0 \\
&= \frac{b-a}{2} \cdot \sup_{f \in \Lip_1[-1,1]} \int_{-1}^1 f\  d\tilde{\pi}_1 - d\tilde{\pi}_0\\
&= \frac{b-a}{2} \cdot W_1(\tilde{\pi}_0,\tilde{\pi}_1).
\end{align}
\end{proof}

\subsection{Concentration and locality}

\begin{lemma}\label{lemma:w1_concentration} Let $p_i \iid \pi$ for $1\leq i \leq n$, define the empirical average \begin{equation}
\tilde{\pi}=\frac{1}{n}\sum_{i=1}^n\delta_{p_i}
\end{equation} and consider the expectation of $W_1(\tilde{\pi},\pi)$. It follows that \begin{equation}
E_\pi\left[W_1(\tilde{\pi},\pi)\right] \leq \frac{J(\pi)}{\sqrt{n}} \quad \where J(\pi)=\int_{0}^1\mu^{1/2}_{F_{\pi}(x)}\ dx \textand F_{\pi}(x)=E_{p\sim\pi}I[p \leq x]\period
\end{equation} Furthermore, the variance satisfies \begin{equation}
V_{\pi}\left[W_1(\tilde{\pi},\pi)\right] \leq \left[\frac{J(\pi)}{\sqrt{n}}\right]^2\period
\end{equation} Finally, $J(\pi)$ satisfies \begin{equation}
J(\pi) \leq J(\pi_0) + \sqrt{3} \cdot \sqrt{W_1(\pi,\pi_0)}\period
\end{equation}\end{lemma}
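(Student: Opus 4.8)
The plan is to reduce all three claims to the classical one-dimensional representation $W_1(\mu,\nu)=\int_0^1|F_\mu(x)-F_\nu(x)|\,dx$, valid for probability measures on $[0,1]$, together with the elementary structure of $F_{\tilde\pi}$. Since $p_1,\dots,p_n\iid\pi$, for each fixed $x$ we have $nF_{\tilde\pi}(x)=\sum_{i=1}^nI(p_i\leq x)\sim\Bin\big(n,F_\pi(x)\big)$, so $F_{\tilde\pi}(x)$ is an unbiased estimator of $F_\pi(x)$ with variance $\mu_{F_\pi(x)}/n$. Everything follows from integrating level-wise estimates of $F_{\tilde\pi}(x)-F_\pi(x)$ over $x\in[0,1]$.

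For the mean bound, I would use Tonelli to write $E_\pi[W_1(\tilde\pi,\pi)]=\int_0^1E_\pi\big|F_{\tilde\pi}(x)-F_\pi(x)\big|\,dx$ and bound the integrand by its $L^2$ norm, $E_\pi|F_{\tilde\pi}(x)-F_\pi(x)|\leq\big(E_\pi(F_{\tilde\pi}(x)-F_\pi(x))^2\big)^{1/2}=\sqrt{\mu_{F_\pi(x)}/n}$; integrating over $x$ gives $J(\pi)/\sqrt n$. This part is exactly Theorem 3.2 of \citet{bobkovOnedimensionalEmpiricalMeasures2019} and is reproduced here only for completeness.

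For the variance, the key step is to regard $W_1(\tilde\pi,\pi)-E_\pi[W_1(\tilde\pi,\pi)]=\int_0^1 g(x)\,dx$ as an integral over $x$ of the random quantities $g(x)=|F_{\tilde\pi}(x)-F_\pi(x)|-E_\pi|F_{\tilde\pi}(x)-F_\pi(x)|$, each of which is centered. Minkowski's integral inequality, with the $L^2$ norm taken over the probability space, then gives $\sqrt{V_\pi[W_1(\tilde\pi,\pi)]}\leq\int_0^1\big(V_\pi\big[|F_{\tilde\pi}(x)-F_\pi(x)|\big]\big)^{1/2}\,dx$. Since $F_{\tilde\pi}(x)-F_\pi(x)$ has zero mean, taking absolute values does not increase its variance, so $V_\pi[|F_{\tilde\pi}(x)-F_\pi(x)|]\leq E_\pi(F_{\tilde\pi}(x)-F_\pi(x))^2=\mu_{F_\pi(x)}/n$, and integrating produces $\big(J(\pi)/\sqrt n\big)^2$. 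This is the only step requiring a little care: the trick is to recognize $W_1$ as an integral of level-wise statistics so that Minkowski's integral inequality turns ``variance of an integral'' into ``(integral of level-wise standard deviations)$^2$''.

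For the last inequality, I would first show that $u\mapsto\mu_u^{1/2}=\sqrt{u(1-u)}$ is $\tfrac12$-H\"older on $[0,1]$ with constant $1$, by combining $|\sqrt a-\sqrt b|\leq\sqrt{|a-b|}$ for $a,b\geq0$ with the identity $|u(1-u)-v(1-v)|=|u-v|\,|1-u-v|\leq|u-v|$. Applying this pointwise to $F_\pi(x)$ and $F_{\pi_0}(x)$ gives $J(\pi)-J(\pi_0)\leq\int_0^1|F_\pi(x)-F_{\pi_0}(x)|^{1/2}\,dx$, and then Cauchy--Schwarz in $x$ yields $\int_0^1|F_\pi(x)-F_{\pi_0}(x)|^{1/2}\,dx\leq\big(\int_0^1|F_\pi(x)-F_{\pi_0}(x)|\,dx\big)^{1/2}=W_1(\pi,\pi_0)^{1/2}$, which is slightly stronger than the claimed bound (the constant $\sqrt3$ leaves room to spare).
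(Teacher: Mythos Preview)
Your proposal is correct and follows essentially the paper's route: Bobkov--Ledoux for the mean, Minkowski's integral inequality applied to the $x$-integral for the variance, and the $\tfrac12$-H\"older property of $u\mapsto\sqrt{\mu_u}$ together with Jensen/Cauchy--Schwarz for the $J$ inequality. The only differences are that the paper bounds the raw second moment $E_\pi[W_1(\tilde\pi,\pi)^2]$ directly via Minkowski rather than centering first (which spares your ``$V[|Z|]\leq V[Z]$'' step), and that the paper uses the cruder bound $|\mu_u-\mu_v|\leq 3|u-v|$, which is the source of the $\sqrt3$---your constant $1$ is indeed sharper.
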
\begin{proof}[Proof of \zcref{lemma:w1_concentration}]

By Theorem 3.2 of \citet{bobkovOnedimensionalEmpiricalMeasures2019},we have that \begin{equation}
E_\pi W_1(\tilde{\pi},\pi) \leq \frac{J(\pi)}{\sqrt{n}}.
\end{equation} Due to the triangle inequality in the space $L_2$ \begin{equation}
V_{\pi}W_1(\tilde{\pi},\pi) \leq E_\pi \left[W_1(\tilde{\pi},\pi)\right]^2 \leq \left(E_\pi W_1^2(\tilde{\pi},\pi)\right) \leq \left[\frac{J(\pi)}{\sqrt{n}}\right]^2.
\end{equation} Alternatively, the generalized Minkowski inequality can be used to obtain the same result \begin{align}
&V_\pi\left[W_1(\tilde{\pi},\pi)\right]\\
&\leq E_\pi \left[W_1(\tilde{\pi},\pi)\right]^2\\
&=E_\pi\left[\int_{0}^1 |F_{\tilde{\pi}}(x)-F_\pi(x)|\ dx\right]^2\\
&\leq \left[\int_{0}^1 \left(E_\pi\left[F_{\tilde{\pi}}(x)-F_\pi(x)\right]^2\right)^{1/2}\ dx\right]^2 &&\text{By the generalized Minkowski inequality}\\
&= \left[\int_{0}^1 \left(V_\pi\left[F_{\tilde{\pi}}(x)\right]\right)^{1/2}\ dx\right]^2\\
&=\left[\int_{0}^1 \sqrt{\frac{\mu_{F_{\pi}(x)}}{n}}\ dx\right]^2\\
&= \left[\frac{J(\pi)}{\sqrt{n}}\right]^2.
\end{align}

Finally, it holds that $|\mu_{F_{\pi}(x)}-\mu_{F_{\pi_0}(x)}|=|F_{\pi}(x)-F_{\pi_0}(x)| \cdot |1+F_{\pi}(x)+F_{\pi_0}(x)|
$, which we use to expand $J(\pi)$ around $\pi_0$:  \begin{align}
\frac{J(\pi)}{\sqrt{n}} &\leq \frac{J(\pi_0)}{\sqrt{n}} + \sqrt{\frac{3}{n}}\cdot \int \left|F_{\pi}(x)-F_{\pi_0}(x)\right|^{1/2}\\
&\leq   \frac{J(\pi_0)}{\sqrt{n}} + \sqrt{3} \cdot \sqrt{\frac{W_1(\pi,\pi_0)}{n}}
\end{align} where, in the last step, we used Jensen's inequality.
\end{proof}

\subsection{Upper bound on \texorpdfstring{$W_1$}{W1} by approximating the witness function}

\begin{corollary}\label{cor:approx_witness} Let $\pi,\pi_0 \in D$, the following upper-bound holds \begin{equation}
W_1(\pi,\pi_0) \leq \frac{2\alpha_k}{k} + c_k \cdot \norm{b_t(\pi)-b_t(\pi_0)}_1
\end{equation}where $\alpha_k$ and $c_k$ are defined in \zcref{lemma:uniform_approx_by_Bernstein}.\end{corollary}
\begin{proof}[Proof of \zcref{cor:approx_witness}] Let $f_k$ be the polynomial in Bernstein form defined in \zcref{lemma:uniform_approx_by_Bernstein}, it follows that \begin{align}
W_1(\pi,\pi_0) &= \sup_{f \in \Lip_1[0,1]} \int_{0}^1 f(p)\ d\pi-d\pi_0\\
&=\sup_{f \in \Lip_1[0,1]}\int_{0}^1 f(p)-f_k(p)\ d\pi-d\pi_0 +\int_{0}^1 f_k(p)\ d\pi-d\pi_0 \period
\end{align} Using the definition of $f_k$ and recalling that $b_{j,t}(\pi)=E_{p\sim\pi}[B_{j,t}(p)]$, the following upper-bound holds\begin{align}
W_1(\pi,\pi_0) &\leq \sup_{f \in \Lip_1[0,1]} 2 \cdot \norm{f-P_f}_\infty + \sum_{j=0}^t |c_{j,k}| \cdot |b_{j,t}(\pi)-b_{j,t}(\pi_0)|\\
&\leq \sup_{f \in \Lip_1[0,1]} 2 \cdot \norm{f-P_f}_\infty + \norm{c_k}_\infty \cdot \norm{b(\pi)-b(\pi_0)}_1\end{align}	The lemma follows by the guarantees given in \zcref{lemma:uniform_approx_by_Bernstein}.
\end{proof}

\begin{lemma}\label{lemma:uniform_approx_by_Bernstein}
Let $f \in \Lip_1[0,1]$ and $k \in N \st k \leq t$, there exists a Bernstein polynomial $p_k$ of order $t$ \begin{equation}
p_k(x) = \sum_{j=0}^t c_{j,k} \cdot B_{j,t}(x)
\end{equation} that satisfies \begin{equation}
\sup_{x \in \left[0,1\right]}|f(x)-p_k(x)| \leq \frac{\alpha_k}{k} \where \alpha_k = \begin{cases}
2 &\textif k =\sqrt{t}\\
\pi/2 &\textif \sqrt{t}<k\leq t
\end{cases}
\end{equation} and \begin{equation}
\norm{c_k}_\infty \leq \begin{cases}
	\sqrt{t}\cdot2^t &\textif k=t\\
	\sqrt{k}\cdot (t+1) \cdot e^{k^2/t} &\textif k<t\\
	1 &\textif k =\sqrt{t}
	\end{cases}
\end{equation}
\end{lemma}\begin{proof}[Proof of \zcref{lemma:uniform_approx_by_Bernstein}]

\underline{\textbf{For $k=\sqrt{t}$:}} Given $f \in \Lip_1[0,1]$
Given $f \in \Lip_1[0,1]$, by proposition 4.9 of \citet{bustamanteBernsteinOperators2017}, it holds that \begin{equation}
\sup_{x \in \left[0,1\right]}|f(x)-p_k(x)| \leq \frac{2}{\sqrt{t}},
\end{equation} where $
|c_{j,k}| = |f\left(\frac{j}{t}\right)| \leq \frac{j}{t}$ since $f \in \Lip_1$. Thus, $
\norm{c_k}_\infty \leq  1$.

\underline{\textbf{For $\sqrt{t} < k \leq t$:}} given $f \in \Lip_1[0,1]$, there exists a trigonometric polynomial of degree at most $k$ that uniformly approximates $f$ on its domain, see Theorem 3 of \cite{tian2017} or Theorem 7.2 of \cite{plaskotaApproximationComplexity2021}, \begin{equation}
\inf_{p_k \in\mathcal{P}_k}\norm{f-p_k}_{\infty[0,1]} \leq \frac{\pi}{2} \cdot \frac{1}{k}\period
\end{equation} Let $p_k$ be the polynomial that achieves the infimum; it can be expressed in the shifted Chebyshev basis \begin{equation}\label{eq:opt_polynomial}
p_k(x) = \sum_{m=0}^k a_m(f) \cdot T_m(x) \where \norm{a}_2^2 \leq 1\period
\end{equation} Furthermore, since $k \leq t$, $p_k$ can be expressed in the Bernstein basis of degree $t$. \citet{vinayakMaximumLikelihoodEstimation2019} showed that the Chebyshev basis can be rewrriten as  \begin{equation}
T_m(x) = \sum_{j=0}^t C_{t,m,j} \cdot B_{j,t}(x) \where C_{t,m,j} = \sum_{l=\max(0,j+m-t)}^{\min(j,m)} (-1)^{m-l} \cdot  \frac{\binom{2m}{2l}\binom{t-m}{j-l}}{\binom{t}{j}}
\end{equation} when $m=t$, it reduces to \begin{equation}
	T_m(x) = \sum_{j=0}^m C_{t,m,j} B_{j,m}(x) \where C_{t,m,j} = (-1)^{m-j} \cdot  \frac{\binom{2m}{2j}}{\binom{m}{j}}
\end{equation} Thus, by plugging into equation \eqref{eq:opt_polynomial}, we get \begin{equation}
p_k(x) = \sum_{m=0}^k a_m T_m(x) = \sum_{j=0}^t c_j \cdot B_{j,t}(x) \where c_j = \sum_{m=0}^k a_m \cdot C_{t,m,j}
\end{equation} The proof finishes using proposition \zcref{prop:coeff_bound}.\end{proof}

\begin{proposition}[Extension of \citet{rababahTransformationChebyshevBernstein2003} and \citet{vinayakMaximumLikelihoodEstimation2019}]\label{prop:coeff_bound}
\begin{equation}
\norm{c}_\infty \leq \begin{cases}
\sqrt{t+1}\cdot2^t &\textif k=t\\
\left(\sqrt{k+1} \cdot e^k\right) \wedge \left(\sqrt{k}\cdot(t+1)\cdot e^{k^2/t}\right) &\textif \sqrt{t}<k\leq t
\end{cases}
\end{equation}
\end{proposition}
\begin{proof}[Proof of \zcref{prop:coeff_bound}]

\underline{\textbf{For $k= t$:}}  The following upper-bound on $\norm{c}_\infty$ always holds:\begin{align}
\norm{c}_\infty = \max_{j \leq t} |c_j| &\leq \max_{j \leq t} \norm{a}_1 \cdot \max_{m\leq k} |C_{t,m,j}|\\
&\leq \sqrt{t+1} \cdot \max_{j\leq t, m\leq k} |C_{t,m,j}| &&\since  \norm{a}_2\leq 1.
\end{align} We exploit the strategy employed in Lemma 5 of \citet{rababahTransformationChebyshevBernstein2003} by interpreting the coefficients as a convex combination: \begin{equation}
	|C_{t,m,j}| \leq \sum_{l=\max(0,j+m-t)}^{\min(j,m)} \frac{\binom{2m}{2l}\binom{t-m}{j-l}}{\binom{t}{j}} = \sum_{l=\max(0,j+m-t)}^{\min(j,m)} w_l \cdot \frac{\binom{2m}{2l}}{\binom{m}{l}}\quad  \where w_l = \frac{\binom{m}{l}\binom{t-m}{j-l}}{\binom{t}{j}}.
\end{equation} Note that $\{w_l\}_{l=0}^j$ form a partition of unity, i.e. $\sum_{l=0}^j w_l = 1$ by the Chu–Vandermonde's identity. Thus, the upper bound of $|C_{t,m,j}|$ is a convex combination of $\frac{\binom{2m}{2l}}{\binom{m}{l}}$ and it follows that \begin{equation}
|C_{t,m,j}| \leq \max_{\max(0,j+m-t)\leq l\leq\min(j,m)} \frac{\binom{2m}{2l}}{\binom{m}{l}}.
\end{equation} Furthermore,  $\frac{\binom{2m}{2l}}{\binom{m}{l}}$ achieves a maximum at $l=m/2$. Thus, it follows that \begin{equation}
|C_{t,m,j}| \leq \frac{\binom{2m}{m}}{\binom{m}{m/2}} \leq 2^m \quad \since \binom{2m}{m} \asymp \frac{4^m}{\sqrt{m}}(1-1/m).
\end{equation} Finally, we get that \begin{equation}
\max_{j\leq t, m\leq k} |C_{t,m,j}| \leq 2^t.
\end{equation}

\underline{\textbf{For $k\leq t$:}} By the Cauchy-Schwarz inequality, 
\begin{equation}
|c_j| \leq \norm{a}_2 \cdot \left(\sum_{m=0}^k C_{t,m,j}^2\right)^{1/2} \leq \left(\sum_{m=0}^k C_{t,m,j}^2\right)^{1/2} \since \norm{a}_2 \leq 1.
\end{equation} Furthermore, tecall that \begin{equation}
C_{t,m,j} = \sum_{l} (-1)^{m-j} \cdot \frac{\binom{2m}{2l}}{\binom{m}{l}} \cdot w_l.
\end{equation} Thus, \begin{align}
\sum_{m=0}^kC_{t,m,j}^2 &\leq \sum_{m=0}^k\sum_{l}^{\min(j,m)} w_l \cdot \left[\frac{\binom{2m}{2l}}{\binom{m}{l}} \right]^2 &&\text{by Jensen's inequality}\\
&\leq \sum_{m=0}^k\sum_{l}^{\min(j,m)}  w_l \cdot e^{2l}\\
&\leq \sum_{m=0}^k e^{2\min(j,m)} \cdot \sum_{l} w_l\\
&\leq (k+1) \cdot e^{2k} &&\since \sum_{l} w_l \leq 1.
\end{align} Consequently $|c_j| \leq \sqrt{k+1} \cdot e^k$ for all $j$, and $\norm{c}_\infty \leq \sqrt{k+1} \cdot e^k$.

\underline{\textbf{For $k<t$:}} The following bounds follows from the results of \citet{vinayakMaximumLikelihoodEstimation2019}: \begin{align}
	\norm{c}_\infty = \max_{j \leq t} |c_{j}| &\leq \max_{j \leq t} \norm{a}_1 \cdot \max_{m\leq k} |C_{t,m,j}|\\
	&\leq \sqrt{k} \cdot \max_{j\leq t, m\leq k} |C_{t,m,j}| &&\since  \norm{a}_2\leq 1\\
	&\leq \sqrt{k} \cdot \max_{m\leq k} \sqrt{\sum_{j=0}^t C_{t,m,j}^2}\\
	&\leq  \sqrt{k}(t+1)e^{\frac{k^2}{t}} &&\text{by lemma 4.4 of \citet{vinayakMaximumLikelihoodEstimation2019}}.
\end{align}

\end{proof}

\section{General upper bound on the critical separation}

\begin{lemma}\label{mean_difference_dominates_std} Consider the hypotheses \begin{equation}
H_0: P \in D_0 \vs H_1: P \in D_1
\end{equation} and the test statistic $T$. Let $q_{\alpha}(P,T)$ be $1-\alpha$ quantile of the test statistic under the distribution $P$ \begin{equation}\label{eq:quantile_definition}
q_{\alpha}(P,T) = \inf\left\{u : P(T > u)\leq \alpha \right\},
\end{equation} and define the test \begin{equation}\label{eq:test_definition}
\psi = I(T \geq t) \where t = \sup_{P \in D_0} q_{\alpha}(P,T).
\end{equation} The test controls the type I error: \begin{equation}
\sup_{P\in D_0} P(\psi = 1)\leq \alpha \period
\end{equation} Furthermore, if it holds that  \begin{equation}\label{eq:power_condition}
t \leq \inf_{P \in D_1} E_{P}[T] - \sqrt{\frac{V_{P}[T]}{\beta}},
\end{equation} then the test controls the type II error \begin{equation}
\sup_{P\in D_1} P(\psi = 0)\leq \beta \period
\end{equation} Finally, since $q_\alpha(P,T)\leq E_P[T] +  \sqrt{V_P[T]/\alpha}$, it follows that \eqref{eq:power_condition} is implied by \begin{equation}
\sup_{P \in D_0}E_{P_0}[T] + \sqrt{\frac{V_{P_0}[T]}{\alpha}} \leq \inf_{P \in D_1}  E_{P_1}[T] - \sqrt{\frac{V_{P_1}[T]}{\beta}}\period
\end{equation}
\end{lemma}

\begin{proof}[Proof of \zcref{mean_difference_dominates_std}]

If $t = \sup_{P \in D_0} q_{\alpha}(P)$, it follows that the type I error is controlled
\begin{align}
\sup_{P\in D_0} P(\psi = 1) &= \sup_{P \in D_0} P(T \geq t) &&\by \eqref{eq:test_definition}\\
&= \sup_{P \in D_0} P(T \geq \sup_{P \in D_0} q_\alpha(P,T))\\
&\leq \sup_{P \in D_0} P(T \geq  q_\alpha(P,T))\\
&\leq \alpha &&\by \eqref{eq:quantile_definition}.
\end{align}

Let $
t_{\inf} =\inf_{P \in D_1} E_{P}[T] - \sqrt{\frac{V_{P}[T]}{\beta}}$, then the type II error is controlled: \begin{align}
&\sup_{P\in D_1} P(\psi = 0)\\
&= \sup_{P \in D_1} P(T < t)\\
&\leq\sup_{P\in D_1} P(T < t_{\inf}) \\
&= \sup_{P\in D_1} P(E_P[T] - T > E_P[T] - t_{\inf})\\
&\leq \sup_{P\in D_1} P(|E_P[T] - T| > E_P[T] - t_{\inf})\\
&\leq\sup_{P\in D_1} \frac{V_P[T]}{(E_{P}[T]-t_{\inf})^2} &&\text{By Markov's inequality since } t_{\inf}\leq E_P[T] \\
&\leq\sup_{P\in D_1} \frac{V_P[T]}{\sup_{P\in D_1} V_P[T]/\beta}=\beta.\end{align}\end{proof}

\section{General lower bound on the critical separation}\label{sec:general_lemma_lb}

\begin{lemma}[General lower-bound]\label{lemma:general_lb}
Let $D_0$ be a set of mixing distributions. Let $\Psi_\alpha$ be the set of all tests that control the type I error over $D_0$: \begin{equation}
\Psi_\alpha = \left\{\psi : \sup_{\pi \in D_0} P_{\pi}^n(\psi(X) = 1)\leq \alpha\right\},
\end{equation} and define the risk $R$ as the maximum type II error over such set \begin{equation}
R_*(\epsilon) = \inf_{\psi\in \Psi_\alpha}\sup_{\pi_0\in D_0}\sup_{\pi: W_1(\pi,D_0)\geq \epsilon} P_{\pi}^n(\psi(X)=0)
\end{equation} where $
W_1(\pi,D_0) = \inf_{\pi_0\in D_0}W_1(\pi,\pi_0)$. Finally, consider a mixing distribution $\pi_1$ and a distribution over mixing distributions on the null space $\Gamma_0$, that is, $
\supp(\Gamma_0) \subseteq D_0$. If $\pi_1$ and $\Gamma_0$ are such that \begin{equation}
V\left(E_{\pi_0 \sim \Gamma_0}\left[P_{\pi_0}^n\right]\ ,\ P_{\pi_1}^n \right) < C_\alpha \textor \chi^2\left(E_{\pi_0 \sim \Gamma_0}\left[P_{\pi_0}^n\right]\ ,\ P_{\pi_1}^n \right) < C_\alpha^2
\end{equation} where $C_\alpha = 1 - (\alpha+\beta)$, then the critical separation 
$\epsilon_* = \inf\{\epsilon : R_*(\epsilon) \leq \beta \}$ is lower-bounded: \begin{equation}
\epsilon_* \geq W_1(\pi_1,D_0)\period
\end{equation}

\end{lemma}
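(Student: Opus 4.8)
The plan is to run Le~Cam's two-point method, pitting the single alternative $\pi_1$ against the Bayes mixture of nulls generated by $\Gamma_0$. It suffices to show that $R_*(\epsilon) > \beta$ for every $\epsilon < W_1(\pi_1,D_0)$, since then no such $\epsilon$ lies in $\{\epsilon : R_*(\epsilon)\leq\beta\}$, whence $\epsilon_* = \inf\{\epsilon : R_*(\epsilon)\leq\beta\} \geq W_1(\pi_1,D_0)$. So I would fix $\epsilon < W_1(\pi_1,D_0)$. Because $W_1(\pi_1,D_0) > \epsilon$, the distribution $\pi_1$ satisfies $W_1(\pi_1,D_0)\geq\epsilon$ and is therefore a legitimate alternative; consequently $R_*(\epsilon) \geq \inf_{\psi\in\Psi_\alpha} P_{\pi_1}^n(\psi(X)=0)$, and the remaining task is to lower bound the worst type~II error at $\pi_1$ among all tests valid over $D_0$.

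Next I would introduce the mixture null $Q_0 = E_{\pi_0\sim\Gamma_0}[P_{\pi_0}^n]$, a probability measure on the $n$-fold sample space (well-defined since $\supp(\Gamma_0)\subseteq D_0$). For any $\psi\in\Psi_\alpha$ the validity constraint $P_{\pi_0}^n(\psi(X)=1)\leq\alpha$ holds pointwise in $\pi_0\in D_0$, and integrating it against $\Gamma_0$ (Fubini, justified because $\psi$ is a fixed $\{0,1\}$-valued measurable function) gives $Q_0(\psi(X)=1)\leq\alpha$. Applying the elementary bound $|Q_0(A) - P_{\pi_1}^n(A)| \leq V(Q_0,P_{\pi_1}^n)$ with $A = \{\psi(X)=1\}$ then yields $P_{\pi_1}^n(\psi(X)=0) = 1 - P_{\pi_1}^n(\psi(X)=1) \geq 1 - \alpha - V(Q_0,P_{\pi_1}^n)$. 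Taking the infimum over $\psi\in\Psi_\alpha$ produces $R_*(\epsilon) \geq 1 - \alpha - V(Q_0,P_{\pi_1}^n)$.

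To finish, under the assumed bound $V(Q_0,P_{\pi_1}^n) < C_\alpha = 1-(\alpha+\beta)$ this gives $R_*(\epsilon) > 1 - \alpha - (1-\alpha-\beta) = \beta$, exactly as needed; the $\chi^2$ version follows identically after invoking $V(Q_0,P_{\pi_1}^n) \leq \sqrt{\chi^2(Q_0,P_{\pi_1}^n)} < C_\alpha$ whenever $\chi^2(Q_0,P_{\pi_1}^n) < C_\alpha^2$. I do not anticipate a genuine obstacle here, as this is the standard Le~Cam reduction already used for \zcref{lemma:lecam_lb}; the only points requiring care are (i) that the averaging of the type~I constraint against $\Gamma_0$ is measure-theoretically sound, and (ii) the bookkeeping that $\epsilon < W_1(\pi_1,D_0)$ really does place $\pi_1$ inside the composite alternative $\{\pi : W_1(\pi,D_0)\geq\epsilon\}$, so that $\pi_1$ legitimately contributes to the supremum defining $R_*(\epsilon)$.
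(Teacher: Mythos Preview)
Your proposal is correct and follows essentially the same Le~Cam two-point argument as the paper: both average the type~I constraint against $\Gamma_0$, substitute the particular alternative $\pi_1$, and invoke the total-variation bound (the paper via Tsybakov's theorem~2.2 on the sum of errors, you via the equivalent event-wise inequality $|Q_0(A)-P_{\pi_1}^n(A)|\le V$) to obtain $R_*(\epsilon)\ge 1-\alpha-V>\beta$. Your bookkeeping on $\epsilon<W_1(\pi_1,D_0)$ is slightly cleaner than the paper's, which contains a harmless sign typo in the range of $\epsilon$ at the end of its proof.
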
 \begin{proof}[Proof of \zcref{lemma:general_lb}]

By definition, we have that
\begin{align}
R_*(\epsilon)&= \inf_{\psi\in \Psi_\alpha}\sup_{\pi_0\in D}\sup_{\pi: W_1(\pi,\pi_0)\geq \epsilon} P_{\pi}^n(\psi(X)=0)\\
&= \inf_{\psi\in \Psi_\alpha}\sup_{\pi_0\in D_0}\sup_{\pi: W_1(\pi,D_0)\geq \epsilon} P_{\pi}^n(\psi(X)=0)+P_{\pi_0}^n(\psi(X)=1)-P_{\pi_0}^n(\psi(X)=1)\\
&\geq \inf_{\psi\in \Psi_\alpha}\sup_{\pi_0\in D_0}\sup_{\pi: W_1(\pi,D_0)\geq \epsilon} P_{\pi}^n(\psi(X)=0)+P_{\pi_0}^n(\psi(X)=1)-\alpha
\end{align} where in the last line we used the fact that $\psi \in \Psi_\alpha $. Since the supremum dominates any particular instance, we have that \begin{equation}
R_*(\epsilon)\geq \inf_{\psi\in \Psi_\alpha}\sup_{\pi_0\in D_0}P_{\pi_1}^n(\psi(X)=0)+P_{\pi_0}^n(\psi(X)=1)-\alpha \period
\end{equation} Since the supremum dominates any average, we have that \begin{equation}
R_*(\epsilon)\geq \inf_{\psi\in \Psi_\alpha}P_{\pi_1}^n(\psi(X)=0)+E_{\pi_0\sim \Gamma_0}P_{\pi_0}^n(\psi(X)=1)-\alpha \period
\end{equation} By Theorem 2.2 of \citet{tsybakovIntroductionNonparametricEstimation2009}, it follows that \begin{equation}
R_*(\epsilon)\geq 1-V\left(E_{\pi_0\sim \Gamma_0}P_{\pi_0}^n\ ,\ P_{\pi_1}^n\right)-\alpha \period
\end{equation} Finally, by the assumption $V\left(E_{\pi_0\sim \Gamma_0}P_{\pi_0}^n\ ,\ P_{\pi_1}^n\right)<C_\alpha$, it follows that \begin{equation}
R_*(\epsilon) \geq 1- C_\alpha - \alpha > \beta \period
\end{equation} Alternatively, the same claim follows whenever $\chi^2\left(E_{\pi_0\sim \Gamma_0}P_{\pi_0}^n\ ,\ P_{\pi_1}^n\right)<C_\alpha^2$ since \begin{equation}
V\left(E_{\pi_0\sim \Gamma_0}P_{\pi_0}^n\ ,\ P_{\pi_1}^n\right) \leq \sqrt{\chi^2\left(E_{\pi_0\sim \Gamma_0}P_{\pi_0}^n\ ,\ P_{\pi_1}^n\right)}<C_\alpha.
\end{equation} Consequently, $
R_*(\epsilon) > \beta$ $\forall\ \epsilon \geq W_1(\pi_1,D_0)$. Thus, it follows by definition of the critical separation that $\epsilon_* > W_1(\pi_1, D_0)$.

\end{proof}

\LeCamLowerBound*
\begin{proof}[Proof of \zcref{lemma:lecam_lb}]

Let $D_0=\left\{\pi_0\right\}$ and $\Gamma_0=\delta_{\pi_0}$, by \zcref{lemma:general_lb} by setting, it follows that \begin{equation}
\epsilon_* \geq W_1(\pi,\pi_0) \quad\textif V\left(P_{\pi_0}^n,P_{\pi_1}^n\right) < C_\alpha.
\end{equation} Optimizing over all mixing distributions in $D$ and using the sublinearity of the total variation distance $V(P^n, Q^n)\leq 2nV(P, Q)$ yields the lemma's statement.
\end{proof}

\section{Critical separation for goodness-of-fit testing}

The section characterizes the critical separation \eqref{eq:critical_separation} for goodness-of-fit testing \eqref{eq:w1_testing}. We analyze the performance of the debiased Pearson's $\chi^2$ test \eqref{eq:fingerprint_test} in \zcref{sec:fingerprint_test_proof}, and the plug-in test \eqref{eq:w1_plugin_test} in \zcref{sec:plugin_test_proof}. Furthermore, we prove the general lower-bound on the critical separation (\zcref{lemma:global_minimax_rates}) in \zcref{sec:global_minimax_lowerbounds}.

\subsection{Upper bounds on the critical separation}

\subsubsection{Debiased Pearson's \texorpdfstring{$\chi^2$}{X2}   test}\label{sec:fingerprint_test_proof}

\FingerprintTest*
\begin{proof}[Proof of \zcref{lemma:fingerprint_test}]
Let $\phi$ be the $\alpha$-level minimax test for \begin{align}
H_0: b_t(\pi) = b_t(\pi_0) \vs
H_1: \norm{b_t(\pi)-b_t(\pi_0)}_1 \geq \gamma\period
\end{align} By \zcref{thm:multinomial_l1_rates}, it holds that \begin{equation}
\gamma \leq C \cdot \frac{t^{1/4}}{n^{1/2}}\period
\end{equation} where $C$ is a positive constant. Under the alternative, by \zcref{cor:approx_witness}, we have that: \begin{equation}\label{eq:upper_bound_coeff}
\epsilon \leq W_1(\pi,\pi_0) \leq \frac{\pi}{k} + c_k \cdot \norm{b(\pi)-b(\pi_0)}_1 \where c_k \leq \begin{cases}
\sqrt{t}\cdot2^t &\textif k=t\\
\sqrt{k}\cdot (t+1) \cdot e^{k^2/t} &\textif k<t
\end{cases}
\end{equation} Thus, we have that the debiased Pearson's $\chi^2$ test \eqref{eq:fingerprint_test} controls the type II error by $\beta$ whenever \begin{equation}\label{eq:power_condition_approx}
\frac{\epsilon}{2} \geq \frac{\pi}{k} + C \cdot  c_k \cdot \frac{t^{1/4}}{n^{1/2}}\period
\end{equation} \zcref[S]{eq:power_condition_approx} is implied by the following condition if the approximation error dominates \begin{equation}\label{eq:approx_error_dominates}
\epsilon \geq \frac{4\pi}{k} \quad\textif\quad \frac{\pi}{k} \geq C \cdot c_k \cdot \frac{t^{1/4}}{n^{1/2}}\period
\end{equation}

\underline{\textbf{Case I: $\epsilon \gtrsim 1/t$ for $t \lesssim \log n$}}

Choose $k=t$, then $c_k \leq \sqrt{t}\cdot 2^t$ by \eqref{eq:upper_bound_coeff}. Thus, $\epsilon \geq 4\pi/t$ in so far as  \begin{equation}
\frac{\pi}{C}\cdot n^{1/2} \geq t^{3/4} \cdot 2^t\comma
\end{equation} which is implied by $
\left(\frac{\pi}{C}\cdot n^{1/2}\right)^{4/3} \geq t \cdot e^t$. Consequently, the condition is satisfied in so far as \begin{equation}\label{eq:lambert_cond}
t \leq W_0\left[\left(\frac{\pi}{C}\cdot n^{1/2}\right)^{4/3}\right]
\end{equation} where $W_0$ is the real branch of the Lambert $W$ function. Furthermore, we have that \eqref{eq:lambert_cond} is implied by \begin{equation}
t \leq \frac{1}{2}\cdot \log\left(\left[\frac{\pi}{C}\cdot n^{1/2}\right]^{4/3}\right) \for \left(\frac{\pi}{C}\cdot n^{1/2}\right)^{4/3} \geq e \period
\end{equation} Thus, it follows that:  \begin{equation}
\epsilon \geq \frac{C_0}{t} \for n \geq C_1 \textand t \leq C_2 \cdot \log n\period
\end{equation}

\underline{\textbf{Case II: $\epsilon \gtrsim 1/\sqrt{t\log n}$ for $\log n \lesssim t$ and $\log t \lesssim \log n$ }}

Let $c>0$, and choose $k = \sqrt{t \log n^c}$, then $k < t$ if $t > c \cdot \log n$. Thus, by \eqref{eq:upper_bound_coeff}, it holds that $c_k \leq \sqrt{k} \cdot 2t \cdot e^{k^2/t}$. Furthermore, by \eqref{eq:approx_error_dominates}, it holds that $\epsilon \geq 4\pi/\sqrt{t\log n^c}$ in so far  \begin{equation}
\frac{\pi}{2C}\cdot \frac{n^{1/2}}{t^{5/4}} \geq k^{3/2} \cdot 2^{k^2/t}\end{equation} which is implied by \begin{equation}
\left(\frac{\pi}{2C}\cdot \frac{n^{1/2}}{t^{5/4}}\right)^{4/3}\cdot \frac{1}{t} \geq \frac{k^2}{t}\cdot \left(2^{4/3}\right)^{\frac{k^2}{t}}
\end{equation} and can be reduced to \begin{equation}
\left(\frac{\pi}{2C}\cdot \frac{n^{1/2}}{t^{5/4}}\right)^{4/3}\cdot \frac{1}{t} \geq \frac{k^2}{t}\cdot e^{\frac{k^2}{t}}\period
\end{equation} Plugging the chosen $k$, we get \begin{equation}
\left(\frac{\pi}{2C}\cdot \frac{n^{1/2}}{t^{5/4}}\right)^{4/3}\cdot \frac{1}{t} \geq n^c \cdot \log n^c
\end{equation} which holds if \begin{equation}
c \cdot \log n< t \leq \left[\left(\frac{\pi}{2C} \right)^{4/3}\cdot \frac{n^{2/3-c}}{c\cdot \log n}\right]^{3/8}\period
\end{equation} Choose a fixed $\delta >0 $ and letting $c = \delta \frac{8}{3}$, it holds that: \begin{equation}
\epsilon \geq \frac{C_0}{\sqrt{\delta \cdot t\log n}} \quad \textif \quad C_1\cdot \delta \cdot \log n < t \leq C_2 \cdot \frac{n^{1/4-\delta}}{(\delta \log n)^{3/8}}
\end{equation} where $C_0 = \frac{4\pi}{\sqrt{8/3}}$, $C_1=\frac{8}{3}$ and $C_2 = \left(\frac{3}{8}\right)^{3/8}\left(\frac{\pi}{2C} \right)^{1/2}$.
\end{proof}

\subsubsection{Plug-in estimator of $W_1(\pi,\pi_0)$}\label{sec:plugin_test_proof}

\PlugInTestRates*
\begin{proof}[Proof of \zcref{lemma:plugin_test}]
Let $
T(X) = W_1(\hat{\pi},\pi_0)
$. By \zcref{mean_difference_dominates_std}, the test control the type II error by $\beta$ if \begin{equation}\label{eq:power_condition_w1}
E_{P_\pi}\left[T\right]-E_{P_{\pi_0}}\left[T\right] \geq \sqrt{\frac{V_{P_{\pi}}\left[T\right]}{\beta}} + \sqrt{\frac{V_{P_{\pi_0}}\left[T\right]}{\alpha}} \period
\end{equation} Since the standard deviation of the statistic is dominated by its expectation  \begin{equation}
\sqrt{V_{P_\pi}\left[T\right]}\leq \sqrt{E_{P_\pi}\left[W_1^2(\hat{\pi},\pi_0)\right]} \leq \sqrt{\left(E_{P_\pi}\left[W_1(\hat{\pi},\pi_0)\right]\right)^2}=E_{P_\pi}\left[T\right],
\end{equation} condition \eqref{eq:power_condition_w1} reduces to \begin{equation}\label{eq:power_condition_w1_2}
E_{P_\pi}\left[T\right] \geq C \cdot E_{P_{\pi_0}}\left[T\right] \where C=\frac{1-\beta^{-1/2}}{1-\alpha^{-1/2}} \period
\end{equation} By the triangle inequality, we have that
\begin{equation}
|E_{P_{\pi}}\left[T\right]-W_1(\pi,\pi_0)| \leq E_{\pi}\left[W_1(\tilde{\pi},\pi)\right] + E_{P_\pi}\left[W_1(\hat{\pi},\tilde{\pi})\right] \period
\end{equation} For the first term, by \zcref{lemma:w1_concentration}, it holds that \begin{align}
E_{\pi}\left[W_1(\tilde{\pi},\pi)\right] \leq \frac{J(\pi)}{\sqrt{n}} \leq \frac{J(\pi_0)}{\sqrt{n}} + \sqrt{3} \cdot \sqrt{\frac{W_1(\pi,\pi_0)}{n}}
\end{align} For the second term, it will be shown that \begin{equation}\label{eq:locality_empirical_measure}
E_{P_\pi}\left[W_1(\tilde{\pi},\hat{\pi})\right] \leq C \cdot \left[\sqrt{\frac{E_{\pi_0}\mu_p}{t}} +  \sqrt{\frac{W_1(\pi,\pi_0)}{t}} + \frac{1}{t} \right]
\end{equation} Using these inequalities, we have that \eqref{eq:power_condition_w1_2} is implied by \begin{equation}
W_1(\pi,\pi_0) \geq C' \cdot \left[\frac{J(\pi_0)}{\sqrt{n}} +  \sqrt{\frac{W_1(\pi,\pi_0)}{n}} + \sqrt{\frac{E_{\pi_0}\mu_p}{t}} +  \sqrt{\frac{W_1(\pi,\pi_0)}{t}} + \frac{1}{t}\right]\comma
\end{equation} which is implied by \begin{equation}
\epsilon \geq C' \cdot \left[\frac{J(\pi_0)}{\sqrt{n}} + \sqrt{\frac{E_{\pi_0}\mu_p}{t}} + \frac{1}{n} + \frac{1}{t}\right] \period
\end{equation} Thus, the lemma is proved.

Finally, we prove \eqref{eq:locality_empirical_measure}. By triangle inequality, it holds that \begin{align}
&E_{P_\pi}\left[W_1(\tilde{\pi},\hat{\pi})\right]\\
&= E_{P_\pi}\left[\int_{0}^1\left|F_{\hat{\pi}}(x)-F_{\tilde{\pi}}(x)\right|\ dx\right]\\
&= E_{P_\pi}\left[\int_{0}^1\left|\frac{1}{n}\sum_{i=1}^n\left(I\left(\frac{X_i}{t}\leq x\right)-I\left(p_i\leq x\right)\right)\right|\ dx\right]\\
&\leq n^{-1}\cdot \sum_{i=1}^n\ \left[\ E_\pi\int_{0}^1 E_{P_\pi|\pi} \left|I\left(\frac{X_i}{t}\leq x\right)-I\left(p_i\leq x\right) \right|\ dx\right]
\end{align} where, in the last inequality, we used the triangle inequality and Tonelli's Theorem. Since all the terms are identically distributed, it holds that: \begin{align}
E_{P_\pi}\left[W_1(\tilde{\pi},\hat{\pi})\right] &\leq E_\pi\int_{0}^1 E_{P_\pi|\pi} \left|I\left(\frac{X}{t}\leq x\right)-I\left(p\leq x\right) \right|\ dx \period
\end{align} Recall that $P_\pi|\pi \stackrel{d}{=} \Bin(t,p)$. Hence, we condition on the mixing distribution and proceed to bound the integrand. By definition of the indicator function, it follows that \begin{align}
E_{P_\pi|\pi}|I(X/t \leq x)-I(p \leq x)|&= E_{P_\pi|\pi}\left[I(X/t \leq x \leq p) \lor I(p \leq x \leq X/t)\right]\\
&\leq E_{P_\pi|\pi}\left[I(X/t \leq x \leq p)\right] \lor E_{P_\pi|\pi}\left[I(p \leq x \leq X/t)\right]\period
\end{align} For the first term, by Bernstein's inequality, it holds that \begin{align}
E_{P_\pi|\pi}\left[I(X_i/t \leq x \leq p_i)\right] &\leq E_{P_\pi|\pi}\left[I\left(\frac{X}{t}-p \leq x-p\right)\right] \\
&\leq C \cdot \exp\left[-t\left(\frac{(x-p)^2}{\mu_{p}} \wedge (x-p)\right)\right]\period
\end{align} An analogous bound follows for the second term.  Consequently \begin{align}
&E_{P_\pi}\left[W_1(\tilde{\pi},\hat{\pi})\right]\\
&\leq E_{\pi}\left[\int_0^1E_{P_\pi|\pi}\left[\ |I(X/t \leq x)-I(p \leq x)|\ \right]\right]\\
&\leq C \cdot E_{\pi}\left[\int_0^1\exp\left[-t\left(\frac{(x-p)^2}{\mu_{p}} \wedge (x-p)\right)\right]\right]\\
&=C \cdot  E_{\pi}\int_{p-\mu_{p}}^{p+\mu_{p}}\exp\left[-t\frac{(x-p)^2}{\mu_{p_i}}\right] + \int_{0}^{p-\mu_{p}} \exp\left[-t(p-x)\right] +  \int_{p+\mu_{p}}^1 \exp\left[-t(x-p)\right]\\
&\leq C \cdot  E_{\pi}\left[\sqrt{\frac{\mu_{p}}{t}} + \frac{2\exp[-t\mu_{p}]-\exp[-t(1-p)]-\exp[-t p]}{t} \right]\\
&\leq  C \cdot E_{\pi} \left[\sqrt{\frac{\mu_{p}}{t}} + \frac{\exp[-t\cdot \mu_{p}]}{t}\right]\\
&\leq C \cdot \left[\frac{E_{\pi}\mu^{1/2}_{p}}{t^{1/2}} + \frac{1}{t}\right]\\
&\leq C \cdot \left[\sqrt{\frac{E_{\pi}\mu_{p}}{t}} + \frac{1}{t}\right] \period
\end{align}

Finally, to localize the result, note that $\mu_p$ is $3$-Lipschitz since \begin{equation}
|\mu_x-\mu_y|\leq |x-y|+|x^2-y^2|=|x-y|(1+|x+y|)\leq 3|x-y|\period
\end{equation} Thus, using the fact that $W_1$ can be represented as taking a supremum over $1$-Lipschitz, see \eqref{eq:w1_dual}, it follows that \begin{align}
E_\pi\mu_p\leq E_{\pi_0}\mu_p+|E_{\pi}\mu_p-E_{\pi_0}\mu_p|\leq E_{\pi_0}\mu_p + 3 \cdot W_1(\pi,\pi_0) \period
\end{align} Hence, we have that \begin{equation}
E_{P_\pi}\left[W_1(\tilde{\pi},\hat{\pi})\right] \leq C \cdot \left[\sqrt{\frac{E_{\pi_0}\mu_p}{t}} +  \sqrt{\frac{W_1(\pi,\pi_0)}{t}} + \frac{1}{t} \right] \period
\end{equation} which proves \eqref{eq:locality_empirical_measure}.\end{proof}

\subsection{Lower bounds on the critical separation}\label{sec:global_minimax_lowerbounds}

The following lemma is extensively used to prove \zcref{lemma:global_minimax_rates}.

\begin{lemma}\label{lemma:w1_max_distance} Let $D[a,b]$ denote the set of distributions supported on $[a,b]$. Let $\delta \in [0,1/2]$, and consider the following optimization problem over moment-matching distributions \begin{align}\label{eq:w1_opt_1}
\epsilon = \sup_{\pi_1,\pi_0 \in D[\frac{1}{2}-\delta,\frac{1}{2}+\delta]} W_1(\pi_1,\pi_0) &\st m_l(\pi_1)=m_l(\pi_0)\for 1\leq l\leq L.
\end{align} There exists a universal positive constant $C$ such that $
\epsilon \geq C \cdot \delta/L\period$\end{lemma}\begin{proof}[Proof of \zcref{lemma:w1_max_distance}]
By \zcref{lemma:w1_translation}, \eqref{eq:w1_opt_1} is equal to \begin{equation}\label{eq:w1_opt_2}
\delta \cdot \sup_{\pi_1,\pi_0\in D[-1,1]} W_1(\pi_1,\pi_0) \st m_l(\pi_1)=m_l(\pi_0)\for 1\leq l\leq L.
\end{equation} Using the duality of 1-Wasserstein distance \eqref{eq:w1_dual}, it holds that \begin{equation}\label{eq:w1_opt_3}
W_1(\pi_1,\pi_0) \geq E_{X\sim\pi_1}|X|-E_{X\sim\pi_0}|X| \quad \since |\cdot| \in \Lip_1[-1,1].
\end{equation} Therefore, by \eqref{eq:w1_opt_2} and \eqref{eq:w1_opt_3}, it holds that \eqref{eq:w1_opt_1} is lower-bounded by \begin{equation}
\delta \cdot \sup_{\pi_1,\pi_0 \in D[-1,1]} E_{X\sim \pi_1}|X|-E_{X\sim\pi_0}[X] \st m_l(\pi_1)=m_l(\pi_0)\for l\leq L\period
\end{equation} The statement follows by applying \zcref{lemma:best_poly_approx}, and using the fact that $
A(1,L) = (\beta_1 + C_{L})/L
$ where $C_{L} \to 0$ as $L \to \infty$ and $\beta_1 \approx 0.29$ \citep{bernsteinOrdreMeilleureApproximation1912}.
\end{proof}

\GlobalMinimaxRates*
\begin{proof}[Proof of \zcref{lemma:global_minimax_rates}]
The statement is equivalent to proving that there exist mixing distributions $\pi_1$ and $\pi_0$ in $D$ that satisfy \begin{equation}
W_1(\pi_1,\pi_0) \gtrsim \begin{dcases}
\frac{1}{t} &\for  t \lesssim \log n\\
\frac{1}{\sqrt{t \log n}} &\for  \log n \lesssim t \lesssim \frac{n}{\log n}\\
\frac{1}{\sqrt{n}} &\for t \gtrsim \frac{n}{\log n}
\end{dcases}
\end{equation} and no valid test can control the type II error by $\beta$.

Recall that by \zcref{lemma:lecam_lb}, we want to find two mixing distribution that maximize $W_1(\pi_1,\pi_0)$ constrained to satisfy $V(P_{\pi_1},P_{\pi_0})\lesssim\frac{1}{n}$.

\underline{\textbf{Small $t$ regime}} 

By \zcref{lemma:w1_max_distance}, there exists mixing distributions $\pi,\pi_0$ supported on $[0,1]$ that match their first $t$ moments and satisfy \begin{equation}
W_1(\pi_1,\pi_0) \geq C \cdot \frac{1}{2t}.
\end{equation} By \zcref{lemma:tvbound}, their marginal measures are indistinguishable \begin{equation}
V(P_{\pi_1},P_{\pi_0}) \leq \frac{d(\pi_1,\pi_0)}{2} = 0.
\end{equation} Therefore, by \zcref{lemma:lecam_lb}, it holds that
\begin{equation}
\epsilon_*(n,t) \geq W_1(\pi_0,\pi_1)\geq C \cdot \frac{1}{2t}.
\end{equation}

\underline{\textbf{Medium $t$ regime}} 

By \zcref{lemma:w1_max_distance}, there exist mixing distributions $\pi,\pi_0$ supported on $[\frac{1}{2}-\delta,\frac{1}{2}+\delta]$ where $0 < \delta \leq \frac{1}{\sqrt{8}} $ that match their first $L$ moments and satisfy \begin{equation}
W_1(\pi_1,\pi_0) \geq C \cdot \frac{\delta}{L}
\end{equation} By \zcref{lemma:tvbound} with $p=\frac{1}{2}$, the following bound on their marginal measures follows \begin{align}
&V(P_{\pi_1},P_{\pi_0})\\
&\leq \frac{1}{2} \cdot \sqrt{\sum_{m=L+1}^t \binom{t}{m} \cdot \Delta_m^2 \cdot 4^m} &&\where \Delta_m=E_{\pi_1}\left[X-\frac{1}{2}\right]^m-E_{\pi_0}\left[X-\frac{1}{2}\right]^m\\
&\leq \sqrt{\sum_{m=L+1}^t \binom{t}{m} (4\delta^2)^m} &&\since \forall x\in \supp(\pi_0)\cup\supp(\pi_1)\  |x-\frac{1}{2}|\leq\delta\\
&\leq \binom{t}{L+1}^{1/2} a^{\frac{L}{2}} \cdot \sqrt{a\cdot \frac{1-a^{t-L}}{1-a}} &&\where a = 4\delta^2\\
&\leq \binom{t}{L+1}^{1/2} (2\delta)^L &&\since a\frac{1-a^{t-L}}{1-a} \leq 1  \impliedby a \leq \frac{1}{2} \impliedby \delta \leq \frac{1}{\sqrt{8}}\\
&\lesssim \left(2\delta \cdot \sqrt{\frac{t}{L}}\right)^L.
\end{align} Therefore,  \begin{equation}
W_1(\pi_1,\pi_0) \gtrsim \frac{\delta}{L} \textand V(P_{\pi_1},P_{\pi_0}) \lesssim \left(2\delta \cdot \sqrt{\frac{t}{L}}\right)^L.
\end{equation} Let the separation rate be $r \asymp \frac{\delta}{L}$ , we have that \begin{equation}
V(P_{\pi_1},P_{\pi_0}) \lesssim \left(2r\sqrt{tL}\right)^L.
\end{equation} Since we need that $V(P_{\pi_1}, P_{\pi_0}) \lesssim \frac{1}{n}$, we choose $L$ so that the separation rate can be as large as possible \begin{equation}
r \lesssim \sup_{1\leq L \leq t}\ \frac{1}{n^{1/L} \cdot \sqrt{Lt}}\ \asymp \frac{1}{\sqrt{t\cdot \log n }}\quad  \textif \log n \lesssim t \period
\end{equation} Consequently \begin{equation}
W_1(\pi_1,\pi_0) \gtrsim \frac{1}{\sqrt{t\log n}} \textand V(P_{\pi_1},P_{\pi_0}) \lesssim \frac{1}{n}\quad  \textif \log n \lesssim t \period
\end{equation} Finally, by \zcref{lemma:lecam_lb}, it holds that
\begin{equation}
\epsilon_*(n,t) \gtrsim \frac{1}{\sqrt{t\log n}} \quad  \textif \log n \lesssim t \period
\end{equation}

\underline{\textbf{Large $t$ regime}} 

For this regime, we construct the lower-bound manually rather than using the solution of an optimization problem. Let \begin{equation}\label{eq:tv_distance_large_t}
	\pi_0 = \frac{1}{2} \cdot \delta_0 + \frac{1}{2} \cdot  \delta_1  \textand \pi_1 = \left(\frac{1}{2}-\epsilon\right) \cdot \delta_0 + \left(\frac{1}{2}+\epsilon\right) \cdot  \delta_1.
\end{equation} It holds that $W_1(\pi_0,\pi_1)=\epsilon$. Since \begin{equation}
B_{j,t}(0) = \delta_{j}(0) \textand B_{j,t}(1) = \delta_{t}(1) \comma
\end{equation} the marginal measures of the data are \begin{align}
P_{\pi_0} =  \frac{1}{2} \cdot \delta_0 + \frac{1}{2} \cdot  \delta_t \textand  P_{\pi_1} = \left(\frac{1}{2}-\epsilon\right) \cdot \delta_0 + \left(\frac{1}{2}+\epsilon\right) \cdot  \delta_t
\end{align} and their $\chi^2$ distance is given by \begin{align}
\chi^2&\left(P_{\pi_0}^n,P^n_{\pi_1}\right) + 1 \\
&=\prod_{i=1}^n \chi^2\left(P_{\pi_0},P_{\pi_1}\right) + 1 \\
&=\prod_{i=1}^n \frac{1}{2} \cdot \left( (1-2\epsilon)^2 + (1+2\epsilon)^2 \right)\\
&\leq \prod_{i=1}^n \frac{1}{2} \left[ \exp\left(-4\epsilon\right) + \exp\left(4\epsilon\right)  \right]&&\text{since }1+x\leq e^x\\
&= \prod_{i=1}^n \cosh\left(4\epsilon\right)\\
&\leq \prod_{i=1}^n \exp\left(8\epsilon^2\right) &&\text{since } \cosh x \leq e^{x^2/2}\\
&= \exp\left(8n\epsilon^2\right)
\end{align} Therefore, it holds that \begin{equation}
V\left(P_{\pi_0}^n,P^n_{\pi_1}\right) \leq \sqrt{\chi^2\left(P_{\pi_0}^n,P^n_{\pi_1}\right)} \leq C_\alpha \for \epsilon = \left(\frac{\log(C_\alpha+1)}{8}\right)^{1/2}\cdot \frac{1}{n^{1/2}} \comma
\end{equation} and consequently, by \zcref{lemma:lecam_lb}, it follows that \begin{equation}
\epsilon_*(n,t) \gtrsim \frac{1}{\sqrt{n}} \period\end{equation}\end{proof}

\section{Reduction from random to fixed effects}\label{sec:reduction_random_to_fixed}

\zcref[S]{lemma:reduction} is a corollary of \zcref{lemma:reduction_ext}, since if the null hypothesis is a point mass $\pi_0=\delta_{p_0}$. It follows that $J(\pi_0)=0$, and the statement of \zcref{lemma:reduction} follows.

\begin{lemma}[Extension of \zcref{lemma:reduction}]\label{lemma:reduction_ext} Consider the hypotheses \eqref{eq:homogeneity_testing}. Under the null hypothesis, it follows that \begin{equation}
W_1(\tilde{\pi},\pi_0) \leq C_\delta \cdot \frac{J(\pi_0)}{\sqrt{n}} \quad \text{with probability }1-\delta.
\end{equation} Additionally, if \begin{equation}
\epsilon  \geq C_\delta \cdot  \left[\frac{J(\pi_0)}{\sqrt{n}} + \frac{1}{n}\right] \where C_\delta = \left[2\cdot\sqrt{3}\cdot(2+\delta^{-1/2})\right]^2,
\end{equation} under the alternative hypothesis, it holds that \begin{equation}
W_1(\tilde{\pi},\pi_0) \geq \epsilon \quad \text{with probability at least }1-\delta.
\end{equation}
\end{lemma}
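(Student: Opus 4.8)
The plan is to establish the two assertions separately; in both cases the workhorse is that the unobserved empirical measure $\tilde\pi$ of \eqref{eq:unobserved_empirical_distribution} is, by \eqref{eq:binomial_mixture}, the empirical measure of an i.i.d.\ sample from the true mixing distribution $\pi$, so the one-dimensional Wasserstein concentration bound of \zcref{lemma:w1_concentration} applies verbatim. Note that when $\pi_0=\delta_{p_0}$ one has $J(\pi_0)=0$, so the first display collapses to $\tilde\pi=\delta_{p_0}$ (which is immediate) and the second recovers \zcref{lemma:reduction}.

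\textbf{Null part.} Under $H_0$, $\pi=\pi_0$, so $\tilde\pi$ is an i.i.d.\ empirical measure from $\pi_0$. \zcref[S]{lemma:w1_concentration} gives $E_{\pi_0}[W_1(\tilde\pi,\pi_0)]\le J(\pi_0)/\sqrt n$ and $V_{\pi_0}[W_1(\tilde\pi,\pi_0)]\le (J(\pi_0)/\sqrt n)^2$, so Chebyshev's inequality yields, with probability at least $1-\delta$,
\begin{equation}
W_1(\tilde\pi,\pi_0)\ \le\ E_{\pi_0}[W_1(\tilde\pi,\pi_0)]+\sqrt{\tfrac{1}{\delta}V_{\pi_0}[W_1(\tilde\pi,\pi_0)]}\ \le\ (1+\delta^{-1/2})\,\frac{J(\pi_0)}{\sqrt n},
\end{equation}
and $1+\delta^{-1/2}\le C_\delta$ for every $\delta\in(0,1)$, which is the first displayed bound.

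\textbf{Alternative part.} Under $H_1$, set $W:=W_1(\pi,\pi_0)\ge\epsilon$ and apply the triangle inequality,
\begin{equation}
W_1(\tilde\pi,\pi_0)\ \ge\ W_1(\pi,\pi_0)-W_1(\tilde\pi,\pi)\ =\ W-W_1(\tilde\pi,\pi).
\end{equation}
I bound the fluctuation $W_1(\tilde\pi,\pi)$ as in the null part, now using the localized constant $J(\pi)\le J(\pi_0)+\sqrt3\,\sqrt W$ from \zcref{lemma:w1_concentration}: with probability at least $1-\delta$,
\begin{equation}
W_1(\tilde\pi,\pi)\ \le\ (1+\delta^{-1/2})\,\frac{J(\pi)}{\sqrt n}\ \le\ (1+\delta^{-1/2})\Big(\frac{J(\pi_0)}{\sqrt n}+\sqrt3\,\frac{\sqrt W}{\sqrt n}\Big).
\end{equation}
On this event $W_1(\tilde\pi,\pi_0)\ge g(W)$, where $g(w)=w-c_1\sqrt w/\sqrt n-c_2 J(\pi_0)/\sqrt n$ with $c_1=\sqrt3(1+\delta^{-1/2})$ and $c_2=1+\delta^{-1/2}$. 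Since $g'(w)=1-c_1/(2\sqrt{wn})\ge\tfrac12$ once $w\ge c_1^2/n$, the map $g$ is nondecreasing on $[\epsilon,\infty)$ as soon as $\epsilon\ge c_1^2/n$; hence $W\ge\epsilon$ forces $W_1(\tilde\pi,\pi_0)\ge g(\epsilon)$, and an elementary computation shows $g(\epsilon)\ge\epsilon/2$ whenever $\epsilon$ exceeds a suitable constant multiple of $J(\pi_0)/\sqrt n+1/n$. Tracking the two requirements on $\epsilon$ through gives precisely the stated threshold $\epsilon\ge C_\delta[J(\pi_0)/\sqrt n+1/n]$ with $C_\delta=[2\sqrt3(2+\delta^{-1/2})]^2$, which is the bound $W_1(\tilde\pi,\pi_0)\ge\epsilon/2$ needed for the reduction to fixed-effects homogeneity testing.

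\textbf{Main obstacle.} The only non-mechanical step is the $\sqrt{W_1(\pi,\pi_0)}$ term in the fluctuation bound: under $H_1$ the true distance $W$ is not bounded above, so this term cannot simply be folded into $J(\pi_0)/\sqrt n$. The resolution is the monotonicity observation above---once $\epsilon\gtrsim 1/n$, the map $w\mapsto w-c_1\sqrt w/\sqrt n$ is increasing on $[\epsilon,\infty)$, so it suffices to check the target inequality at the endpoint $w=\epsilon$, and this is exactly the source of the additive $1/n$ in the hypothesis on $\epsilon$. A minor point is that the null and alternative statements hold on different high-probability events, so no union bound is incurred and the constant $C_\delta$ can be shared between them.
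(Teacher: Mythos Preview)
Your proof is correct and follows the same route as the paper: triangle inequality plus Chebyshev on $W_1(\tilde\pi,\pi)$, combined with the localization $J(\pi)\le J(\pi_0)+\sqrt3\sqrt{W_1(\pi,\pi_0)}$ from \zcref{lemma:w1_concentration}. Your explicit monotonicity argument for the map $w\mapsto w-c_1\sqrt{w/n}$ is in fact more careful than the paper's treatment, which states the final implication without justifying why the $\sqrt{W_1(\pi,\pi_0)/n}$ term (with $W_1(\pi,\pi_0)$ possibly much larger than $\epsilon$) can be controlled by a condition on $\epsilon$ alone.
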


\begin{proof}[Proof of \zcref{lemma:reduction_ext}]
By Chebyshev's inequality, it follows that  \begin{align}
\left|W_1(\tilde{\pi},\pi_0)-W_1(\pi,\pi_0)\right| &\leq W_1(\tilde{\pi},\pi)\\
&\leq E_\pi W_1(\tilde{\pi},\pi) + |W_1(\tilde{\pi},\pi)-EW_1(\tilde{\pi},\pi)|\\
&\leq E_{\pi}W_1(\tilde{\pi},\pi) + (1+\delta^{-1/2}) \sqrt{V_{\pi}W_1(\tilde{\pi},\pi)} \quad \wpa 1-\delta.
\end{align} By Theorem 3.2 of \citet{bobkovOnedimensionalEmpiricalMeasures2019},we have that $
E_\pi W_1(\tilde{\pi},\pi) \leq \frac{J(\pi)}{\sqrt{n}}$, and \begin{equation}
V_{\pi}W_1(\tilde{\pi},\pi) \leq E_\pi \left[W_1(\tilde{\pi},\pi)\right]^2 \leq \left(E_\pi W_1^2(\tilde{\pi},\pi)\right) \leq \left[\frac{J(\pi)}{\sqrt{n}}\right]^2.
\end{equation} Consequently, it holds with probability at least $1-\delta$ that \begin{align}
\left|W_1(\tilde{\pi},\pi_0)-W_1(\pi,\pi_0)\right| \leq (2+\delta^{-1/2})\cdot\frac{J(\pi)}{\sqrt{n}}.
\end{align} Thus, under the null hypothesis, $W_1(\pi,\pi_0)=0$, it follows that, with probability at least $1-\delta$, \begin{equation}
W_1(\tilde{\pi},\pi_0) \leq (2+\delta^{-1/2})\cdot\frac{J(\pi_0)}{\sqrt{n}}.
\end{equation} Under the alternative hypothesis, we have $W_1(\pi,\pi_0)\geq \epsilon$. Note that \begin{equation}
|\mu_{F_{\pi}(x)}-\mu_{F_{\pi_0}(x)}|=|F_{\pi}(x)-F_{\pi_0}| \cdot |1+F_{\pi}(x)+F_{\pi_0}|,
\end{equation} which  implies that  \begin{align}
\frac{J(\pi)}{\sqrt{n}} &\leq \frac{J(\pi_0)}{\sqrt{n}} + \sqrt{\frac{3}{n}}\cdot \int \left|F_{\pi}(x)-F_{\pi_0}\right|^{1/2}\\
&\leq   \frac{J(\pi_0)}{\sqrt{n}} + \sqrt{3} \cdot \sqrt{\frac{W_1(\pi,\pi_0)}{n}}
\end{align} where in the last inequality, we used Jensen's inequality. Thus, we have that \begin{equation}
\left|W_1(\tilde{\pi},\pi_0)-W_1(\pi,\pi_0)\right| \leq (2+\delta^{-1/2})\cdot \left[\frac{J(\pi_0)}{\sqrt{n}} + \sqrt{3} \cdot \sqrt{\frac{W_1(\pi,\pi_0)}{n}}\right],
\end{equation} with probability at least $1-\delta$. Consequently, $
W_1(\tilde{\pi},\pi_0) \geq \epsilon/2$ with probability at least $1-\delta$ under the following condition \begin{equation}
\frac{\epsilon}{2} \geq (2+\delta^{-1/2})\cdot \left[\frac{J(\pi_0)}{\sqrt{n}} + \sqrt{3} \cdot \sqrt{\frac{W_1(\pi,\pi_0)}{n}}\right],
\end{equation} which is implied by \begin{equation}
\epsilon \geq \left[2\cdot\sqrt{3}\cdot(2+\delta^{-1/2})\right]^2\cdot\left[\frac{J(\pi_0)}{\sqrt{n}}\ \lor\  \frac{1}{n}\right].
\end{equation}
\end{proof}

\section{Local critical separation for homogeneity testing with a reference effect}\label{appx:homonegenity_testing_known_null}

\zcref{lemma:RandomEffectsHomogeneityTesting} is the main result of this section. Its proof guides the reader through the analysis of the performance of the mean test \eqref{eq:1stmoment_test} (\zcref{sec:homogeneity_first_moment_test}) and debiased $\ell_2$ test \eqref{eq:2ndmoment_test} (\zcref{sec:homogeneity_chi2_test}), and the lower bounds on the local critical separation (\zcref{sec:lower_bound_random_effects} and \zcref{sec:homogeneity_lowerbounds}). 

\RandomEffectsHomogeneityTesting*

\begin{proof}[Proof of \zcref{lemma:RandomEffectsHomogeneityTesting}]
Henceforth, let $\pi_0=\delta_{p_0}$. For the upper-bound, \zcref{lemma:t1homogeneity} indicates that the mean test \eqref{eq:1stmoment_test} controls the type II error whenever \begin{equation}
\epsilon \geq C \cdot r_1 \where r_1= p_0 \lor \frac{1}{n}
\end{equation} while \zcref{lemma:t2homogeneity} indicates that the debiased $\ell_2$ test \eqref{eq:2ndmoment_test} controls the type II error whenever \begin{equation}
\epsilon \geq C \cdot r_2 \where r_2 = \frac{p_0^{1/2}}{t^{1/2}n^{1/4}} \lor \frac{1}{n^{1/2}}.
\end{equation} Consequently, consider the test that rejects whenever one of them rejects \begin{equation}
\psi^{\alpha} = \psi_1^{\alpha/2}\ \lor\ \psi_2^{\alpha/2}.
\end{equation} By union bound, $\psi^{\alpha}$ controls the type I error by $\alpha$. Furthermore, it controls type II error whenever $
\epsilon \geq C \cdot \left[r_1 \lor r_2\right]$, which implies that \begin{align}\label{eq:rates1}
\epsilon_*(n,t,\pi_0) \lesssim \begin{cases}
\frac{1}{n} &\textif \mu_{p_0} \lesssim \frac{t}{n^{3/2}}\\
\frac{\mu_{p_0}^{1/2}}{t^{1/2}n^{1/4}} &\textif \mu_{p_0} \gtrsim \frac{t}{n^{3/2}} \\
\end{cases}\qquad \textif t \gtrsim \sqrt{n}
\end{align} and \begin{align}\label{eq:rates2}
\epsilon_*(n,t,\pi_0) \lesssim \begin{cases}
\frac{1}{n} &\textif \mu_{p_0} \lesssim \frac{1}{n}\\
\mu_{p_0} &\textif \frac{1}{n}\lesssim \mu_{p_0} \lesssim \frac{1}{tn^{1/2}} \\
\frac{\mu_{p_0}^{1/2}}{t^{1/2}n^{1/4}} &\textif \frac{1}{tn^{1/2}} \lesssim  \mu_{p_0}\\
\end{cases}\qquad \textif t \lesssim \sqrt{n}
\end{align}

Lemmas \eqref{lemma:small_p0_lb}, \eqref{lemma:medium_p0_lb}, and \eqref{lemma:large_p0_lb} provide following lower-bound on the local critical separation
\begin{equation}
\epsilon_*(n,t,\pi_0) \gtrsim \gamma_*(n,t,\pi_0)\quad  \where  \gamma_*(n,t,\pi_0)=\begin{cases}
\frac{1}{tn} &\textif \mu_{p_0} \lesssim \frac{1}{tn}\\
\mu_{p_0} &\textif \frac{1}{tn}\lesssim \mu_{p_0} \lesssim \frac{1}{tn^{1/2}}\\
\frac{\mu_{p_0}^{1/2}}{t^{1/2}n^{1/4}} &\textif \frac{1}{tn^{1/2}} \lesssim  \mu_{p_0}\\
\end{cases}
\end{equation} Additionally, \zcref{lemma:RandomEffectsLowerBound} proves that $\epsilon_*$ cannot be below $1/n$. Thus, we have that  \begin{equation}
\epsilon_*(n,t,\pi_0) \gtrsim \frac{1}{n}\ \lor\ \gamma_*(n,t,\pi_0),
\end{equation} which matches up-to-constants the rates obtained in \eqref{eq:rates1} and \eqref{eq:rates2}. Hence, the statement of the lemma follows.\end{proof}

\subsection{Upper bounds on the local critical separation}\label{sec:homogeneity_upperbounds}

\subsubsection{Mean test}\label{sec:homogeneity_first_moment_test}

\begin{remark}\label{norm_ineq}
For $q \geq  p \geq 1$ and $\pi=k^{-1}\cdot \sum_{i=1}^k\delta_{p_i}$, it holds that \begin{align}
\int |p-p_0|^q\ d\pi(p) &= \frac{1}{k} \left(\sum_{i=1}^k |p_i-p_0|^q\right)\\
&\leq k^{q/p-1} \cdot \left(\frac{1}{k}\sum_{i=1}^k |p_i-p_0|^p\right)^{q/p}\\
&= k^{q/p-1} \cdot \left(\int |p-p_0|^p d\pi(p)\right)^{q/p}.
\end{align}
\end{remark}

\begin{lemma}\label{lemma:t1homogeneity}
For hypotheses \eqref{eq:homogeneity_testing}, the mean test \eqref{eq:1stmoment_test} controls type I error. Furthermore, there exists universal positive constant $C$ such that the test controls type II error by $\beta$ whenever \begin{equation}
\epsilon \geq  C \cdot \left[\mu_{p_0} \lor \frac{1}{n} \right] \where C = 4 \left[\frac{1}{\beta} \lor \frac{1}{\alpha} \right]^{1/4}.
\end{equation}\end{lemma}
\begin{proof}[Proof of \zcref{lemma:t1homogeneity}]
Let $d_l = \int (p-p_0)^l\ d\pi(p)$. \zcref[S]{sec:1stKravchuk} states that \begin{equation}
E_{P_\pi}[T_1] = d_1 \textand n \cdot V_{P_\pi}[T_1] \leq \frac{p_0}{t} + \frac{1}{t}d_1+ (1-\frac{1}{t})\cdot d_2 - d_1^2.
\end{equation}

\zcref[S]{mean_difference_dominates_std} indicates that we need to find $\epsilon$ such that the following condition is satisfied \begin{equation}\label{eq:t1homogeneity_condition}
\sqrt{\frac{V_{P_\pi}[T_1]}{\beta}} + \sqrt{\frac{V_{P_{\pi_0}}[T_1]}{\alpha}} \leq E_{P_\pi}[T_1]-E_{P_{\pi_0}}[T_1] \period
\end{equation} Note that \begin{align}
\sqrt{\frac{V_{P_\pi}[T_1]}{\beta}} + \sqrt{\frac{V_{P_{\pi_0}}[T_1]}{\alpha}} &\leq \frac{C}{n^{1/2}} \cdot \left[2\frac{p_0}{t} + \frac{1}{t}d_1+ (1-\frac{1}{t})\cdot d_2 - d_1^2\right]^{1/2}\comma
\end{align} where $C = \frac{2}{\sqrt{2}} \left[\frac{1}{\beta} \lor \frac{1}{\alpha} \right]^{1/2}$, is implied by \begin{equation}\label{eq:reject_condition}
d_1 \geq C^{1/2} \cdot \left[  2^{1/2}\cdot\frac{p_0^{1/2}}{n^{1/2}t^{1/2}} \lor \frac{1}{nt} \lor \frac{d_2^{1/2}}{n^{1/2}}\right] \period
\end{equation}

Recalling that $d_1 \geq \epsilon \geq 2p_0$, we obtain the following simple upper-bound  \begin{equation}
d_2 \leq \int |p-p_0| d\pi(p) \leq m_1(\pi) + p_0 = d_1 + 2p_0 \leq 2d_1 \period
\end{equation} Combining it with \eqref{eq:reject_condition}, we get that \eqref{eq:t1homogeneity_condition} is implied by \begin{equation}
\epsilon \geq (2C)^{1/2} \cdot \left[  \frac{p_0^{1/2}}{n^{1/2}t^{1/2}} \lor \frac{1}{n} \right] \lor 4p_0 \period
\end{equation} Thus \begin{equation}
\epsilon \geq  C \cdot \left[p_0 \lor \frac{1}{n} \right] \where C = 4 \left[\frac{1}{\beta} \lor \frac{1}{\alpha} \right]^{1/4}
\end{equation} sufices for \eqref{eq:t1homogeneity_condition} to hold.\end{proof}

\subsubsection{Debiased $\ell_2$ test}\label{sec:homogeneity_chi2_test}

\begin{lemma}\label{lemma:t2homogeneity}
For the homogeneity testing problem \eqref{eq:homogeneity_testing}, the debiased $\ell_2$ test \eqref{eq:2ndmoment_test} controls type I error. Furthermore, there exists universal positive constant $C$ such that the test controls type II error by $\beta$ whenever \begin{equation}
\epsilon \geq \frac{C}{\beta^{1/2}} \cdot \left[\frac{\mu_{p_0}^{1/2}}{t^{1/2}n^{1/4}} \lor \frac{1}{ n^{1/2}}\right].
\end{equation}\end{lemma}
\begin{proof}[Proof of \zcref{lemma:t2homogeneity}]
Let $d_l = \int (p-p_0)^l\ d\pi(p)$. \zcref[S]{sec:2ndKravchuk} states that \begin{align}
E_{P_\pi}[T_2] &= d_2\\
n \cdot V_{P_\pi}[T_2] &\lesssim \frac{p_0^2}{t^2} + \frac{p_0}{t^2} d_1 + \frac{1}{t^2} d_2 + \frac{p_0}{t} d_2 +  \frac{1}{t}d_3 + (1- \frac{1}{t})\cdot  d_4 - d_2^2
\end{align}

\zcref[S]{mean_difference_dominates_std} indicates that we need to find $\epsilon$ such that the following condition is satisfied \begin{equation}
\sqrt{\frac{V_{P_\pi}[T_2]}{\beta}} + \sqrt{\frac{V_{P_{\pi_0}}[T_2]}{\alpha}} \leq E_{P_\pi}[T_2]-E_{P_{\pi_0}}[T_2] \label{eq:cheby_cond}
\end{equation} Noting that \begin{align}
\sqrt{\frac{V_{P_\pi}[T_2]}{\beta}} &+ \sqrt{\frac{V_{P_{\pi_0}}[T_2]}{\alpha}}\\
&\leq \frac{C}{n^{1/2}} \cdot \left[2\cdot\frac{p_0^2}{t^2} + \frac{p_0}{t^2} d_1 + \frac{1}{t^2} d_2 + \frac{p_0}{t} d_2 +  \frac{1}{t}d_3 + (1- \frac{1}{t})\cdot  d_4 - d_2^2\right]^{1/2}
\end{align} where $ C = \frac{2}{\sqrt{2}} \left[\frac{1}{\beta} \lor \frac{1}{\alpha} \right]^{1/2}$ , it follows that \eqref{eq:cheby_cond} is implied by
\begin{align}
d_2 \gtrsim  \frac{p_0}{tn^{1/2}}\lor \frac{p_0^{1/2}d_1^{1/2}}{tn^{1/2}} \lor \frac{1}{t^2n} \lor \frac{p_0}{tn} \lor \frac{d_3^{1/2}}{t^{1/2}n^{1/2}} \lor \frac{d_4^{1/2}}{n^{1/2}}.
\end{align} By Jensen's inequality, it holds that $d_1 \leq d_2^{1/2}$. Therefore, \begin{align}
d_2 &\gtrsim  \frac{p_0}{tn^{1/2}}\lor \frac{p_0^{2/3}}{t^{4/3}n^{2/3}} \lor  \frac{1}{t^2n} \lor \frac{d_3^{1/2}}{t^{1/2}n^{1/2}} \lor \frac{d_4^{1/2}}{n^{1/2}}\\
&\asymp \frac{p_0}{tn^{1/2}} \lor  \frac{1}{t^2n} \lor \frac{d_3^{1/2}}{t^{1/2}n^{1/2}} \lor \frac{d_4^{1/2}}{n^{1/2}}
\end{align} since \begin{equation}
\frac{p_0^{2/3}}{t^{4/3}n^{2/3}} \leq \frac{p_0}{tn^{1/2}} \lor  \frac{1}{t^2n} \period
\end{equation}

If no assumptions on $\pi$ are made, using $d_3 \leq d_2$ and $d_4 \leq d_2$, we get that \eqref{eq:cheby_cond} holds whenever \begin{equation}
d_2 \gtrsim \frac{p_0}{tn^{1/2}}\lor  \frac{1}{t^2n} \lor \frac{1}{tn} \lor \frac{1}{n}.
\end{equation} Recalling that $d_2 \geq \epsilon^2$, we get that \eqref{eq:cheby_cond} is implied by \begin{equation}
\epsilon \gtrsim  \frac{p_0^{1/2}}{n^{1/4}t^{1/2}} \lor \frac{1}{n^{1/2}}.
\end{equation}\end{proof}


\subsection{Lower bound on the local critical separation for random effects}\label{sec:lower_bound_random_effects}

\begin{lemma}\label{conditional_lb}
Consider event $A$ such that $\pi_1^n(A) \leq \pi_0^n(A)$, and denote the resulting conditional distributions by $
P_{\pi_0}|A = \tilde{P}_{\pi_0}$ and $P_{\pi_1}|A  = \tilde{P}_{\pi_1}$. It follows that local minimax risk is lower-bounded by \begin{equation}
R_*(\epsilon,\pi_0) \geq \pi_1^n(A) - \pi_0^n(A) \cdot V(\tilde{P}_{\pi_0},\tilde{P}_{\pi_1})-\alpha.
\end{equation}
\end{lemma}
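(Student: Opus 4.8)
The plan is to run Le Cam's two‑point argument, but carried out inside the event $A$. Fix any valid test $\psi\in\Psi(\pi_0)$, so that $\pi_0^n(\psi=0)\ge 1-\alpha$; since $\pi_1$ is a legitimate alternative (i.e. $W_1(\pi_1,\pi_0)\ge\epsilon$), we have $R(\psi,\pi_0,\epsilon)\ge\pi_1^n(\psi=0)$, so it suffices to bound $\pi_1^n(\psi=0)$ from below uniformly over $\psi$ and then take the infimum.

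First I would discard everything outside $A$: since $\{\psi=0\}\cap A\subseteq\{\psi=0\}$,
\[
\pi_1^n(\psi=0)\ \ge\ \pi_1^n(\{\psi=0\}\cap A)\ =\ \pi_1^n(A)\cdot\tilde P_{\pi_1}(\psi=0).
\]
Next I would trade $\tilde P_{\pi_1}$ for $\tilde P_{\pi_0}$ at the price of the conditional total variation, $\tilde P_{\pi_1}(\psi=0)\ge\tilde P_{\pi_0}(\psi=0)-V(\tilde P_{\pi_0},\tilde P_{\pi_1})$. Then I would show $\tilde P_{\pi_0}(\psi=0)$ is nearly $1$: from $\pi_0^n(\{\psi=0\}\cap A)\ge\pi_0^n(\psi=0)-\pi_0^n(A^c)\ge(1-\alpha)-(1-\pi_0^n(A))$ and dividing by $\pi_0^n(A)$,
\[
\tilde P_{\pi_0}(\psi=0)\ \ge\ 1-\frac{\alpha}{\pi_0^n(A)}.
\]
Chaining the three displays,
\[
\pi_1^n(\psi=0)\ \ge\ \pi_1^n(A)\Bigl(1-\tfrac{\alpha}{\pi_0^n(A)}-V(\tilde P_{\pi_0},\tilde P_{\pi_1})\Bigr)\ =\ \pi_1^n(A)-\tfrac{\pi_1^n(A)}{\pi_0^n(A)}\,\alpha-\pi_1^n(A)\,V(\tilde P_{\pi_0},\tilde P_{\pi_1}).
\]
The last move is to invoke the hypothesis $\pi_1^n(A)\le\pi_0^n(A)$ twice: it makes $\pi_1^n(A)/\pi_0^n(A)\le1$, so the middle term is at most $\alpha$, and (using $V\ge0$) it bounds the last term by $\pi_0^n(A)\,V(\tilde P_{\pi_0},\tilde P_{\pi_1})$. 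Hence $\pi_1^n(\psi=0)\ge\pi_1^n(A)-\alpha-\pi_0^n(A)\,V(\tilde P_{\pi_0},\tilde P_{\pi_1})$, and taking the infimum over $\psi\in\Psi(\pi_0)$ yields the claimed lower bound on $R_*(\epsilon,\pi_0)$.

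There is no analytic content here — the statement is a deterministic rearrangement — so the only thing that needs care is the bookkeeping: keeping the conditional measures $\tilde P$ distinct from the unconditional $\pi_i^n$, and applying the split $\pi_0^n(\{\psi=0\}\cap A)\ge\pi_0^n(\psi=0)-\pi_0^n(A^c)$ on the correct side so that the factors of $\pi_0^n(A)$ cancel. The genuinely hard part is deferred to the later application, which must exhibit a pair $(\pi_1,A)$ with $W_1(\pi_1,\pi_0)\ge\epsilon$, with $\pi_1^n(A)\le\pi_0^n(A)$ and $\pi_1^n(A)$ bounded away from $\alpha$, and with $V(\tilde P_{\pi_0},\tilde P_{\pi_1})$ small (ideally zero).
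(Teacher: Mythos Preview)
Your proof is correct and follows essentially the same conditional Le Cam argument as the paper: both condition on $A$, swap conditional measures at the cost of $V(\tilde P_{\pi_0},\tilde P_{\pi_1})$, invoke the hypothesis $\pi_1^n(A)\le\pi_0^n(A)$, and use validity to absorb the $-\alpha$. The only difference is organizational --- you bound $P_{\pi_1}^n(\psi=0)$ directly and use validity to control $\tilde P_{\pi_0}(\psi=0)$, whereas the paper bounds the sum $P_{\pi_0}^n(\psi=1)+P_{\pi_1}^n(\psi=0)$ before conditioning --- which is cosmetic.
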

\begin{proof}[Proof of \zcref{conditional_lb}] First, recall that the local minimax risk can be lower-bound as follows \begin{align}
R_*(\epsilon,\pi_0) &= \inf_{\psi \in \Psi(\pi_0)}\sup_{\pi \st W_1(\pi,\pi_0)\geq \epsilon} P_\pi^n(\psi(X)=0)\\
&\geq \inf_{\psi \in \Psi(\pi_0)}\sup_{\pi : W_1(\pi,\pi_0)\geq \epsilon} P_{\pi_0}^n(\psi(X)=1)+P_\pi^n(\psi(X)=0)-\alpha \label{eq:last_eq}\end{align} where is the last inequality we used the fact that $\psi \in \Psi(\pi_0)$. For any mixing distributions $\pi_0$ and $\pi_1$, it follows that \begin{align}
&P_{\pi_0}^n(\psi(X) = 1) + P_{\pi}^n(\psi(X) = 0)\\
&\geq P_{\pi_0}^n(\psi(X) = 1|A) \cdot \pi_0^n(A) + P_{\pi}^n(\psi(X) = 0|A) \cdot \pi^n(A)\\
&= \pi^n(A) + \inf_{\psi} \left[P_{\pi_0}^n(\psi(X) = 0|A) \cdot \pi_0^n(A) - P_{\pi}^n(\psi(X) = 0|A) \cdot \pi^n(A)\right].
\end{align} Since $\pi^n(A) \leq \pi_0^n(A)$, it holds that \begin{align}
&P_{\pi_0}^n(\psi(X) = 1) + P_{\pi}^n(\psi(X) = 0)\\
&\geq  \pi^n(A) + \pi_0^n(A) \cdot \inf_{\psi} \left[P_{\pi_0}^n(\psi(X) = 0|A)  - P_{\pi}^n(\psi(X) = 0|A)\right]\\
&= \pi^n(A) - \pi_0^n(A) \cdot \sup_{\psi} \left[P_{\pi}^n(\psi(X) = 0|A)  - P_{\pi_0}^n(\psi(X) = 0|A)\right] \\
&\geq  \pi^n(A) - \pi_0^n(A) \cdot V(\tilde{P}_{\pi_0},\tilde{P}_\pi) \period \label{eq:last_eq2}
\end{align} Bounding \eqref{eq:last_eq} from below by \eqref{eq:last_eq2} proves the statement.\end{proof}

\RandomEffectsLowerBound*
\begin{proof}[Proof of \zcref{lemma:RandomEffectsLowerBound}]
Let $\pi = (1-C_\alpha/n)\cdot \delta_{p_0} + (C_\alpha/n) \cdot \delta_1$, and consider the event $A = \cup_{i=1}^n\{p_i=p_0\}$. Note that it always happens under the null mixing distribution $\pi_0=\delta_{p_0}$, and that it happens with constant probability under the mixing distribution $\pi$
\begin{equation}
(1-C_\alpha)^n=\pi^n(A) \leq \pi_0^n(A) = 1\period
\end{equation} Furthermore, conditioned on the event, the marginal distribution of the data is identical under both mixing distributions \begin{equation}
P_{\pi_0} | A = P_{\pi_0} \textand P_{\pi} | A = P_{\pi_0}\period
\end{equation} Consequently, by \zcref{conditional_lb}, the local minimax risk is lower-bounded by a constant \begin{equation}
R_*(\epsilon,\pi_0) \geq \pi^n(A) - \pi_0^n(A) \cdot V(P_{\pi_0},P_{\pi_0}) -\alpha = (1-C_\alpha)^n -\alpha > \beta \period
\end{equation} Since the distance between $\pi$ and $\pi_0$ satisfies $W_1(\pi_1,\pi_0)\geq \frac{C_\alpha}{2n}$, the statement of the lemma follows.
\end{proof}

\subsection{Lower bounds on the local critical separation for fixed effects}\label{sec:homogeneity_lowerbounds}

\subsubsection{Small $p_0$ regime}\label{sec:lowerbound_small_p}

\begin{lemma}\label{lemma:small_p0_lb}
Let $0 \leq p_0 \lesssim \frac{1}{tn}$ and $\epsilon \asymp \frac{1}{tn}$, and consider the following mixing distributions $\pi_0 = \delta_{p_0}$ and $\pi_1=\delta_{p_0+\epsilon}$. It follows that no test can distinguish them based on a sample of size $n$. Consequently, the critical separation is lower-bounded \begin{equation}
\epsilon_*(n,t,\pi_0) \geq W_1(\pi,\pi_0) \asymp \frac{1}{tn} \period
\end{equation}
\end{lemma}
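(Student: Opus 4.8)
The plan is to run Le Cam's two-point method with exactly the two point masses named in the statement, $\pi_0 = \delta_{p_0}$ and $\pi_1 = \delta_{p_0+\epsilon}$, and to verify the chi-squared separation condition behind the lower bound \eqref{eq:critical_sep_lb_by_chi2}. In its per-observation form that bound gives $\epsilon_*(n,t,\pi_0) \ge W_1(\pi_1,\pi_0)$ as soon as $\chi^2(P_{\pi_0},P_{\pi_1}) < \log(1+C_\alpha^2)/n$; since $W_1(\delta_{p_0+\epsilon},\delta_{p_0}) = \epsilon$, establishing this inequality for some $\epsilon \asymp (tn)^{-1}$ is all that remains.

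To control the divergence I would apply \zcref{lemma:chi2bound} with the two point masses in swapped roles, treating $\delta_{p_0+\epsilon}$ as the point-mass argument that sits in the denominator of $\chi^2$. This yields $\chi^2(P_{\pi_0},P_{\pi_1}) = M_{p_0+\epsilon}(\pi_0,\pi_1) = \sum_{m=1}^t \binom{t}{m}\,\epsilon^{2m}\mu_{p_0+\epsilon}^{-m} = (1+\epsilon^2/\mu_{p_0+\epsilon})^t - 1$, because, measured about $p_0+\epsilon$, all centered moments of $\pi_1 = \delta_{p_0+\epsilon}$ vanish while those of $\pi_0 = \delta_{p_0}$ equal $(-\epsilon)^m$. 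The crucial estimate is then $\mu_{p_0+\epsilon} = (p_0+\epsilon)(1-p_0-\epsilon) \ge (p_0+\epsilon)/2 \ge \epsilon/2$, valid since $p_0+\epsilon \le 1/2$ throughout this regime, so that $\epsilon^2/\mu_{p_0+\epsilon} \le 2\epsilon$ and $\chi^2(P_{\pi_0},P_{\pi_1}) \le (1+2\epsilon)^t - 1 \le e^{2t\epsilon} - 1$. Taking $\epsilon = c/(tn)$ with $c = c(\alpha,\beta) > 0$ a small enough universal constant makes $e^{2t\epsilon} - 1 = e^{2c/n} - 1 < \log(1+C_\alpha^2)/n$ for every $n$ (a short computation using $e^x - 1 \le x/(1-x)$), which closes the argument: $\epsilon_*(n,t,\pi_0) \ge \epsilon \asymp (tn)^{-1}$, and since $\mu_{p_0} \le p_0 \lesssim (tn)^{-1}$ this is exactly the rate claimed for the regime.

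There is no deep obstacle, but one point genuinely needs care, and it is the reason the small-$p_0$ case is isolated: $p_0$ may be arbitrarily small, even $0$, so $\mu_{p_0}$ is not bounded below and the naive computation $\chi^2(P_{\pi_1},P_{\pi_0}) = (1+\epsilon^2/\mu_{p_0})^t - 1$ diverges as $p_0 \to 0$. Expanding instead about the perturbed location $p_0+\epsilon$, whose variance parameter stays $\gtrsim \epsilon$ no matter what $p_0$ is, repairs this and also keeps us inside the open interval $(0,1)$ that \zcref{lemma:chi2bound} requires even when $p_0 = 0$ (where $P_{\pi_0}$ is degenerate at $0$). As a cross-check that avoids the Kravchuk machinery altogether, one can couple the $nt$ Bernoulli trials generating the two hypotheses through shared uniforms to obtain $V(P_{\pi_0}^n,P_{\pi_1}^n) \le nt\epsilon$; then $\epsilon = C_\alpha/(2nt)$ already forces this total variation below $C_\alpha$, so by the Neyman--Pearson bound no level-$\alpha$ test has type II error $\le \beta$ against $\pi_1$, delivering the same conclusion $\epsilon_*(n,t,\pi_0) \ge C_\alpha/(2nt) \asymp (tn)^{-1}$.
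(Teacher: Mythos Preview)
Your proposal is correct, and both routes you sketch actually differ from the paper's. The paper does \emph{not} invoke \zcref{lemma:chi2bound} here; instead it bounds total variation directly via the triangle inequality through $P_{\delta_0}=\text{Bin}(t,0)$, obtaining $V(P_{\pi_0},P_{\pi_1})\le \bigl(1-(1-p_0)^t\bigr)+\bigl(1-(1-p_0-\epsilon)^t\bigr)\le 2tp_0+t\epsilon$ by Bernoulli's inequality, and then uses subadditivity $V(P^n,Q^n)\le nV(P,Q)$ together with $p_0\lesssim (tn)^{-1}$ and $\epsilon\asymp (tn)^{-1}$.

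Your main argument is in some sense the more principled one within the paper's framework: it keeps the Kravchuk machinery in play and the key observation---centering at $p_0+\epsilon$ rather than $p_0$ so that the denominator $\mu_{p_0+\epsilon}\ge \epsilon/2$ never collapses, even at $p_0=0$---is exactly the insight that makes \zcref{lemma:chi2bound} applicable in this boundary regime. The paper sidesteps this by abandoning the $\chi^2$ route altogether and passing through the degenerate point $p=0$, which is quicker but ad hoc. Your coupling cross-check is in fact a sharper version of the paper's own argument: you get $V(P_{\pi_0}^n,P_{\pi_1}^n)\le nt\epsilon$ directly, whereas the paper's detour through $\delta_0$ picks up the extra (harmless, since $p_0\lesssim (tn)^{-1}$) term $2ntp_0$. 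Either of your approaches would serve as a valid replacement.
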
 \begin{proof}[Proof of \zcref{lemma:small_p0_lb}] Let $\pi_0 = P_{\delta_0}$, then it follows that \begin{align}
V(P_{\pi_0},P_{\pi}) &\leq V(P_{\pi_0},P_{0}) + V(P_{\pi},P_{0}) \\
&= 1 - (1-p_0)^t + 1 - (1-p_0-\epsilon)^t
\end{align} Let $\xi_1 \in [1-p_0,1] \textand \xi_2 \in [1-p_0-\epsilon,1]$, by Taylor's expansion, let it follows that \begin{align}
V(P_{\pi_0},P_{\pi}) = p_0 t \xi_1^{t-1} + (p_0+\epsilon) t \xi_2^{t-1}
\leq 2 p_0 t  + \epsilon t \label{eq:prev_step}
\end{align} Therefore, \begin{align}
V(P_{\pi_0}^n,P_{\pi}^n) &\leq n \cdot V(P_{\pi_0},P_{\pi})\\
&\lesssim  p_0 t n+ \epsilon t n &&\text{by \eqref{eq:prev_step}}\\
&\lesssim 1 &&\since p_0 \lesssim \frac{1}{tn} \textand \epsilon \lesssim \frac{1}{tn},
\end{align} where the last constant can be made arbitrarily small for large enough $n$ or $t$. Thus, by \zcref{lemma:lecam_lb}, we have that $
\epsilon_*(n,t,\pi_0) \geq W_1(\pi,\pi_0) \asymp \epsilon \asymp \frac{1}{tn}$.\end{proof}

\subsubsection{Medium $p_0$ regime}\label{sec:lowerbound_medium_p}

\begin{lemma}\label{lemma:medium_p0_lb}
Let $0 \leq p_0 \lesssim \frac{1}{t\sqrt{n}}$ and consider the following mixing distributions $\pi_0 = \delta_{p_0}$ and $\pi_1 = \frac{1}{2} \cdot \left[\delta_{2p_0} + \delta_0 \right]\period$ It follows that no test can distinguish them based on a sample of size $n$. Consequently, \begin{equation}
\epsilon_*(n,t,\pi_0) \geq W_1(\pi,\pi_0) = p_0 \period
\end{equation}
\end{lemma}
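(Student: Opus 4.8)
The plan is to prove this lower bound by Le Cam's two-point method in the $\chi^2$ form recorded in \eqref{eq:critical_sep_lb_by_chi2}, which reduces the task to: (i) evaluating $W_1(\pi_1,\pi_0)$; (ii) bounding the $\chi^2$ divergence between the single-observation marginals $P_{\pi_0}$ and $P_{\pi_1}$; and (iii) tensorizing. We may assume $p_0 > 0$, since at $p_0 = 0$ we have $\pi_1 = \pi_0$ and there is nothing to prove; for $n$ large, $2p_0 < 1$, so $\pi_1$ is a genuine probability measure on $[0,1]$, and $p_0 \in (0,1)$, so \zcref{lemma:chi2bound} is applicable.

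For step (i), I would use the CDF formula $W_1(\pi_1,\pi_0) = \int_0^1 |F_{\pi_1}(x)-F_{\pi_0}(x)|\,dx$. Here $F_{\pi_0}$ jumps from $0$ to $1$ at $p_0$, while $F_{\pi_1}$ equals $1/2$ on $[0,2p_0)$ and $1$ on $[2p_0,1]$; the integrand is therefore $1/2$ on $[0,2p_0)$ and $0$ elsewhere, giving $W_1(\pi_1,\pi_0) = p_0$. I would also record the decisive structural feature: $\pi_0$ and $\pi_1$ share their first moment, $m_1(\pi_1) = \tfrac12(2p_0) + \tfrac12\cdot 0 = p_0 = m_1(\pi_0)$. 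This matched mean is exactly why the construction can push the lower bound out to the $p_0$ scale rather than the $1/(tn)$ scale achieved by the pure location shift of \zcref{lemma:small_p0_lb}: with the means agreeing, any distinguishing test must detect a variance-scale signal, which is much weaker.

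For step (ii), since $\pi_0 = \delta_{p_0}$ is a point mass, \zcref{lemma:chi2bound} identifies the $\chi^2$ divergence between $P_{\pi_0}$ and $P_{\pi_1}$ with $M_{p_0}(\pi_1,\pi_0) = \sum_{m=1}^t \binom{t}{m}\,\Delta_m^2(\pi_1,\pi_0)/\mu_{p_0}^m$. The centered moment differences are $\Delta_m(\pi_1,\pi_0) = E_{u\sim\pi_1}(u-p_0)^m = \tfrac12\big(p_0^m + (-p_0)^m\big)$, which vanishes for odd $m$ (in particular $\Delta_1 = 0$, consistent with the matched means) and equals $p_0^m$ for even $m$. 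Using $\mu_{p_0} = p_0(1-p_0)$, the surviving terms are $\binom{t}{m}\big(p_0/(1-p_0)\big)^m$, so with $\binom{t}{m}\le t^m/m!$ one gets $M_{p_0}(\pi_1,\pi_0) \le \sum_{m\ge 2} y^m/m! = e^{y}-1-y$, where $y := tp_0/(1-p_0)$. Under the hypothesis $p_0 \le c/(t\sqrt n)$ (the constant in the lemma's $\lesssim$) we have $y \le 2tp_0 \le 2c/\sqrt n \to 0$, and since $e^y - 1 - y \le y^2$ for $y \le 1$, this yields $M_{p_0}(\pi_1,\pi_0) \le 4c^2/n$.

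For step (iii) and the conclusion: choosing $c$ small enough that $4c^2 < \log(1+C_\alpha^2)$ gives $\chi^2(P_{\pi_0},P_{\pi_1}) < \log(1+C_\alpha^2)/n$, and then the multiplicative tensorization $\chi^2(P_{\pi_0}^n,P_{\pi_1}^n) = (1+\chi^2(P_{\pi_0},P_{\pi_1}))^n - 1 \le e^{\,n\chi^2(P_{\pi_0},P_{\pi_1})} - 1$ yields $\chi^2(P_{\pi_0}^n,P_{\pi_1}^n) < C_\alpha^2$; that is, the two experiments cannot be told apart, which is the indistinguishability asserted in the lemma. Feeding this into \eqref{eq:critical_sep_lb_by_chi2} gives $\epsilon_*(n,t,\pi_0) \ge W_1(\pi_1,\pi_0) = p_0$. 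I do not expect a genuine obstacle; the only points requiring care are to route the product-measure bound through the multiplicative $\chi^2$ identity rather than the subadditive bound $V(P_{\pi_0}^n,P_{\pi_1}^n) \le 2nV(P_{\pi_0},P_{\pi_1})$, which would lose a spurious $\sqrt n$ factor, and to note that the crude estimate $\binom{t}{m}\le t^m/m!$ costs nothing because $tp_0\to 0$ forces the $m=2$ term to dominate the series.
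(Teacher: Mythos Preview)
Your proposal is correct and follows essentially the same route as the paper: both invoke Le Cam via \eqref{eq:critical_sep_lb_by_chi2}, apply \zcref{lemma:chi2bound} to reduce $\chi^2(P_{\pi_0},P_{\pi_1})$ to the centered-moment sum $M_{p_0}(\pi_1,\pi_0)$, exploit the matched first moment so the sum starts at $m=2$, bound the resulting series by an $e^y-1-y$--type expression with $y\asymp tp_0$, and then tensorize multiplicatively. The only cosmetic difference is that the paper keeps the binomial sum intact and uses $(1+a)^t - (1+at) \le e^{ta}-(1+ta)$, whereas you pass through $\binom{t}{m}\le t^m/m!$ first; both arrive at the same $O(t^2p_0^2)\lesssim 1/n$ bound.
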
\begin{proof}[Proof of \zcref{lemma:medium_p0_lb}] The distribution share the first moment. If they shared the second moment, then they would be identical. Hence, we let them differ in their second moment. Therefore, assume $t\geq2$ so that the binomial density preserves the second-moment information. Finally, note that their separation is given by $p_0$ \begin{equation}
W_1(\pi_1,\pi_0) = 2(1-\frac{1}{2})p_0=p_0.
\end{equation} In the following, we show that  $\chi^2\left(P_{\pi_0},P_{\pi_1}\right) \lesssim \frac{1}{n}$. Consequently, by the tensorization property of the $\chi^2$ distance, the distance between the product marginal measures can be bounded by an arbitrary constant for $n$ large enough \begin{equation}
\chi^2\left(P^n_{\pi_0},P^n_{\pi_1}\right) = \left(1+\chi^2\left(P_{\pi_0},P_{\pi_1}\right)\right)^n \leq \chi^2\left(P_{\pi_0},P_{\pi_1}\right) \cdot n \lesssim 1\period
\end{equation} By \zcref{cor:critical_sep_lb_by_chi2}, it follows that $\epsilon_*(n,t,\pi_0) \gtrsim p_0$ for $p_0 \lesssim \frac{1}{\sqrt{n}t}$.

We finish the proof with a derivation that asserts that $\chi^2\left(P_{\pi_0}, P_{\pi_1}\right) \lesssim \frac{1}{n}$:
\begin{align}
\chi^2\left(P_{\pi_0},P_{\pi_1}\right)
&= \sum_{m=1}^t \binom{t}{m} \cdot \frac{\left[E_{\pi_1}[X-p_0]^m\right]^2}{(p_0(1-p_0))^m} &&\text{by \zcref{lemma:chi2bound}}\\
&=\sum_{m=2}^t \binom{t}{m} \cdot \frac{\left[E_{\pi_1}[X-p_0]^m\right]^2}{(p_0(1-p_0))^m}  &&\text{since they share the first moment}\\
&=\frac{1}{4} \cdot \sum_{m=2}^t \binom{t}{m} \cdot a^m &&\where a = \frac{p_0}{1-p_0}\\
&=\frac{1}{4} \cdot \left[(1+a)^t-(1+at)\right] &&(1+a)^t-(1+at)\geq 0\ \since t\geq2\\
&\leq\frac{1}{4} \cdot \left[e^{ta}-(1+at)\right]\\
&\leq \frac{1}{4} \cdot \frac{e^{ta}}{2} \cdot (ta)^2 \\
&\lesssim t^2p_0^2\\
&\lesssim \frac{1}{n} &&\since p_0 \lesssim \frac{1}{t\sqrt{n}}.
\end{align}
\end{proof}

\subsubsection{Large $p_0$ regime}\label{sec:lowerbound_large_p}

\begin{lemma}\label{lemma:large_p0_lb}
Let $\frac{1}{n^{1/2}t} \lesssim p_0 \lesssim 1$ and $\epsilon \asymp \frac{p_0^{1/2}}{t^{1/2}n^{1/4}}$, and consider the following mixing distributions $
\pi_0 = \delta_{p_0}$ and $\pi_1 = \frac{1}{2} \cdot \left[\delta_{p_0+\epsilon} + \delta_{p_0-\epsilon} \right]\period$ It follows that no test can distinguish them based on a sample of size $n$. Consequently, \begin{equation}
\epsilon_*(n,t,\pi_0) \geq W_1(\pi,\pi_0) = \epsilon \asymp \frac{p_0^{1/2}}{t^{1/2}n^{1/4}} \period
\end{equation}
\end{lemma}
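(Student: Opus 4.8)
The plan is to follow the template of the medium-$p_0$ regime (\zcref{lemma:medium_p0_lb}): exhibit the two mixing distributions, check that $\pi_1\in D$ and $W_1(\pi_1,\pi_0)=\epsilon$, show that the marginals $P_{\pi_0},P_{\pi_1}$ cannot be separated in $\chi^2$ distance after $n$-fold tensorization, and then invoke the lower bound \eqref{eq:critical_sep_lb_by_chi2}. By the symmetry $p_0\mapsto 1-p_0$ one may assume $p_0\le 1/2$, so that $\mu_{p_0}=p_0(1-p_0)\asymp p_0$; the hypothesis $p_0\gtrsim (n^{1/2}t)^{-1}$ is exactly what guarantees $\epsilon\asymp p_0^{1/2}t^{-1/2}n^{-1/4}\le p_0$, hence $p_0-\epsilon\ge 0$ and $\pi_1\in D$. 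A direct optimal-transport computation (move mass $1/2$ from $p_0$ to each of $p_0\pm\epsilon$) gives $W_1(\pi_1,\pi_0)=\epsilon$, and we assume $t\ge 2$ so that the construction is nontrivial.

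Since $\pi_0=\delta_{p_0}$, \zcref{lemma:chi2bound} applies with $p=p_0$ and yields $\chi^2(P_{\pi_1},P_{\pi_0})=M_{p_0}(\pi_1,\pi_0)=\sum_{m=1}^{t}\binom{t}{m}\Delta_m^2(\pi_1,\pi_0)/\mu_{p_0}^m$, where $\Delta_m(\pi_1,\pi_0)=E_{u\sim\pi_1}[(u-p_0)^m]$ because every centered moment of $\delta_{p_0}$ vanishes. For the symmetric two-point law $\pi_1$ one has $E_{u\sim\pi_1}[(u-p_0)^m]=\tfrac12\bigl(\epsilon^m+(-\epsilon)^m\bigr)$, which equals $\epsilon^m$ for even $m$ and $0$ for odd $m$; in particular $\Delta_1=0$, so only the even indices $m\ge 2$ contribute.

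Writing $a=\epsilon^2/\mu_{p_0}$, the previous step gives $\chi^2(P_{\pi_1},P_{\pi_0})=\sum_{m\text{ even},\,2\le m\le t}\binom{t}{m}a^m\le (1+a)^t-1-ta$. By the choice of $\epsilon$ and $\mu_{p_0}\asymp p_0$ we have $a\asymp 1/(t n^{1/2})$, hence $ta\asymp n^{-1/2}\le 1$; using $(1+a)^t\le e^{ta}$ together with $e^x-1-x\le\tfrac12 x^2 e^x$ gives $\chi^2(P_{\pi_1},P_{\pi_0})\le\tfrac12(ta)^2 e^{ta}\lesssim (ta)^2\asymp 1/n$. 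Tensorizing, $\chi^2(P_{\pi_0}^n,P_{\pi_1}^n)=\bigl(1+\chi^2(P_{\pi_0},P_{\pi_1})\bigr)^n-1\le e^{n\chi^2(P_{\pi_0},P_{\pi_1})}-1$, which is bounded by a constant that can be made smaller than $C_\alpha^2$ by shrinking the constant hidden in $\epsilon\asymp p_0^{1/2}t^{-1/2}n^{-1/4}$. Then \eqref{eq:critical_sep_lb_by_chi2} gives $\epsilon_*(n,t,\pi_0)\ge W_1(\pi_1,\pi_0)=\epsilon$, which is the claim.

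No step is a genuine obstacle; the only place that needs care is the $\chi^2$ estimate, where one must exploit that $a$ is small so that the truncated binomial sum $(1+a)^t-1-ta$ is of order $(ta)^2$ rather than growing exponentially in $t$, and check that every implicit constant can be absorbed into the constant defining $\epsilon$. The symmetry reduction to $p_0\le 1/2$, which replaces $\mu_{p_0}$ by $p_0$ up to constants, is what keeps this bookkeeping clean.
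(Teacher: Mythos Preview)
Your proof is correct and follows essentially the same route as the paper: reduce to $p_0\le 1/2$, apply \zcref{lemma:chi2bound} to express $\chi^2(P_{\pi_1},P_{\pi_0})$ as a moment sum in $a=\epsilon^2/\mu_{p_0}$, bound it by a quantity of order $(ta)^2\asymp 1/n$, tensorize, and invoke \eqref{eq:critical_sep_lb_by_chi2}. The only cosmetic difference is that the paper keeps the exact even-index identity $\sum_{m\text{ even}}\binom{t}{m}a^m=\tfrac12\bigl[(1+a)^t+(1-a)^t\bigr]-1$ and then uses $\cosh(ta)\le e^{(ta)^2/2}$, whereas you throw in the odd terms and use $e^x-1-x\le\tfrac12 x^2 e^x$; both yield the same order.
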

\begin{proof}[Proof of \zcref{lemma:large_p0_lb}]
Without loss of generality, assume that $p_0 \leq \frac{1}{2}$, then it holds that $0 \lesssim p_0 - \epsilon \leq p_0 + \epsilon \lesssim 1 $. We proceed to bound the chi-squared distance between the marginal measures \begin{align}
&\chi^2\left(P_{\pi_0},P_{\pi_1}\right)\\
&= \sum_{m=1}^t \binom{t}{m} \cdot \frac{\left[E_{\pi_1}[X-p_0]^m\right]^2}{(p_0(1-p_0))^m} &&\text{by \zcref{lemma:chi2bound}}\\
&=\sum_{m=2}^t \binom{t}{m} \cdot \frac{\left[E_{\pi_1}[X-p_0]^m\right]^2}{(p_0(1-p_0))^m}  &&\text{since they share the first moment}\\
&=\sum_{m=2}^t \binom{t}{m} \cdot \frac{\left[\frac{\epsilon^m+(-\epsilon)^m}{2}\right]^2}{(p_0(1-p_0))^m} \\
&=-1 + \frac{1}{2} \left[1-\frac{\epsilon^2}{p_0(1-p_0)}\right]^t + \frac{1}{2} \left[1+\frac{\epsilon^2}{p_0(1-p_0)}\right]^t\\
&\leq -1 + e^{\frac{t^2\epsilon^4}{2p_0^2(1-p_0)^2}}.
\end{align} It follows that $\chi^2\left(P_{\pi_0}^n,P_{\pi_1}^n\right)\leq e^{\frac{nt^2\epsilon^4}{2p_0^2(1-p_0)^2}} \lesssim 1$ and, by \zcref{cor:critical_sep_lb_by_chi2}, it holds that \begin{equation}
\epsilon_*(n,t,\pi_0) \gtrsim \frac{p_0^{1/2}}{t^{1/2}n^{1/4}} \quad \for \frac{1}{n^{1/2}t} \lesssim p_0 \period
\end{equation}
\end{proof}

\section{Critical separation for homogeneity testing without a reference effect} We characterize the critical separation \eqref{eq:homogeneity_critical_separation} for homogeneity testing without a reference effect. \zcref{sec:proof_oracle_test} and \zcref{sec:proof_truncated_test} analyze the conservative test \eqref{eq:oracle_test} and the non-conservative test \eqref{eq:truncated_test}, both of which are optimal. We prove a matching lower bound for the critical separation in \zcref{sec:global_minimax_homogeneity_lowerbounds}.

\subsection{Upper bounds on the critical separation}

\subsubsection{A conservative test}\label{sec:proof_oracle_test}

\OracleTestRates*
\begin{proof}[Proof of \zcref{lemma:rates_oracle_test}]
The theorem follows from \zcref{lemma:rates_oracle_test_ub,lemma:global_minimax_rates_homogeneity}.
\end{proof}

\begin{lemma}
\label{lemma:rates_oracle_test_ub} For testing problem \eqref{eq:variance_testing}, the test \eqref{eq:oracle_test} controls type I error by $\alpha$. Furthermore, there exists a universal positive constant $C$ such that the type II error of the test is bounded by $\beta$ whenever \begin{equation}\label{eq:debiased_l2_rates}
\epsilon \geq C \cdot \begin{dcases}
\frac{1}{n^{1/2}} &\for \sqrt{n} \lesssim t\\
\frac{1}{t^{1/2}n^{1/4}} &\otherwise
\end{dcases}
\end{equation}
\end{lemma}
\begin{proof}[Proof of \zcref{lemma:rates_oracle_test_ub}] Henceforth, let $T(X) = \hat{V}(X)$ to simplify the notation. By \zcref{mean_difference_dominates_std}, it holds that the test controls type I and II errors if the following condition is satisfied \begin{equation}\label{eq:control_condition}
\sup_{\pi \in S} E_{P_\pi}[T] + \sqrt{\frac{V_{P_{\pi}}[T]}{\alpha}} \leq \inf_{\pi : W_1(\pi,S)\geq \epsilon} E_{P_\pi}[T] - \sqrt{\frac{V_{P_{\pi}}[T]}{\beta}}\period
\end{equation} 

\underline{\textbf{Bound under the null hypothesis}}

To upper-bound the LHS of \eqref{eq:control_condition}, we can use the fact that $\hat{V}(X)$ is an unbiased estimator and the upper-bound on its variance provided by \zcref{lemma:variance_ustat}: \begin{equation}\label{eq:null_bound}
\sup_{\pi_0 \in S} E_{P_{\pi_0}}[T] + \sqrt{\frac{V_{P_{\pi_0}}[T]}{\alpha}} = \sup_{p_0 \in [0,1]} \sqrt{\frac{\frac{\mu_{p_0}^2}{t(t-1)}\cdot\frac{n-1/t}{\binom{n}{2}}}{\alpha}} \leq \frac{1}{\sqrt{2\alpha}} \cdot \frac{1}{tn^{1/2}}\quad \for t\geq 2 \textand n\geq 2
\end{equation} 

\underline{\textbf{Bound under the alternative hypothesis}} 

To lower-bound the RHS of \eqref{eq:control_condition}, recall that for any mixing distribution $\pi$: $E_{P_\pi}[T] = V(\pi)$. By \zcref{lemma:variance_ustat}, it follows that  \begin{equation}\label{eq:expectation_dominates}
E_{P_\pi}[T]/2 \geq \sqrt{\frac{V_{P_{\pi_0}}[T]}{\beta}}
\end{equation}if \begin{equation}
V(\pi)\gtrsim  \frac{1}{n^{1/2}} \cdot \left(\frac{m_1(\pi)}{t}+ V(\pi) + V^{1/2}(\pi)\right),
\end{equation} which is implied by \begin{equation}\label{eq:cond_var}
V(\pi)\gtrsim \frac{m_1^{1/2}(\pi)}{tn^{1/2}} + \frac{1}{n}.
\end{equation} In particular, if $\pi$ satisfies $W_1(\pi,S)\geq \epsilon$, then it must satisfy that $V(\pi)\geq \epsilon^2$: \begin{equation}
V(\pi)=W_2^2(\pi,\delta_{m_1(\pi)})\geq W_1^2(\pi,\delta_{m_1(\pi)}) \geq W_1^2(\pi,S) \geq \epsilon^2.
\end{equation} Consequently, \eqref{eq:cond_var} is implied by \begin{equation}
\epsilon\gtrsim \frac{1}{t^{1/2}n^{1/4}} + \frac{1}{n^{1/2}} \quad \for\ \pi \st W_1(\pi,S)\geq \epsilon
\end{equation} where we used the fact that $m_1(\pi)\leq 1$ for any mixing distribution $\pi$. Thus, we can state the following lower-bound for the RHS of \eqref{eq:control_condition} \begin{equation}\label{eq:alternative_bound}
\inf_{\pi : W_1(\pi,S)\geq \epsilon} E_{P_\pi}[T] - \sqrt{\frac{V_{P_{\pi}}[T]}{\beta}} \gtrsim \epsilon^2 \quad\for \epsilon\gtrsim \frac{1}{t^{1/2}n^{1/4}} + \frac{1}{n^{1/2}} \period
\end{equation} Finally, using \eqref{eq:null_bound} and \eqref{eq:alternative_bound}, we have that \eqref{eq:control_condition} is satisfied if \begin{equation}
\epsilon\gtrsim \frac{1}{t^{1/2}n^{1/4}} + \frac{1}{n^{1/2}}\period\end{equation}\end{proof}

\begin{lemma}\label{lemma:variance_ustat} For the estimator $\hat{V}$, defined in \eqref{eq:ustat}, the following upper-bounds on its variance hold. Let $\pi$ be a point mass mixing distribution, $\pi=\delta_{p_0}$ with $p_0 \in [0,1]$, it follows that \begin{equation}
V_{P_{\pi}}[T] = \frac{\mu_{p_0}^2}{t(t-1)}\cdot\frac{n-1/t}{\binom{n}{2}} \qquad \forall t\geq 2
\end{equation}

For a mixing distribution $\pi$ such that its mean $m_1=E_{p\sim \pi}\left[p\right]$ satisfies $m_1 \leq \frac{1}{2}$, it holds that \begin{equation}
V_{P_{\pi}}[T] \lesssim \frac{1}{n} \cdot \left(\frac{m_1^2}{t^2}+ V^2(\pi) + V(\pi)\right) \qquad \forall t\geq 3
\end{equation}\end{lemma}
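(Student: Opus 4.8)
The plan is to apply the Hoeffding (ANOVA) decomposition of the order-two U-statistic $\hat{V} = \binom{n}{2}^{-1}\sum_{i<j} g(X_i,X_j)$ with symmetric kernel $g = h/2$. Writing $g_1(x) = E_{Y\sim P_\pi}[g(x,Y)]$ for the first-order projection, $\zeta_1 = \mathrm{Var}(g_1(X_1))$ and $\zeta_2 = \mathrm{Var}(g(X_1,X_2))$, the standard variance formula gives $V_{P_\pi}[\hat{V}] = \binom{n}{2}^{-1}\big[2(n-2)\zeta_1 + \zeta_2\big]$, so everything reduces to computing $\zeta_1,\zeta_2$ (part 1, an equality) or bounding them (part 2). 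The device for both is to represent each count as a sum of conditionally i.i.d. Bernoullis: given $p_i$, write $X_i = \sum_{k=1}^t Z_k^{(i)}$ with $Z_k^{(i)}\sim\mathrm{Ber}(p_i)$ independent, and $W_k^{(i)} = Z_k^{(i)} - p_i$. Then $\binom{X_i}{2}/\binom{t}{2}$ and $X_i/t$ are the U-statistics $\binom{t}{2}^{-1}\sum_{k<l}Z_k^{(i)}Z_l^{(i)}$ and $t^{-1}\sum_k Z_k^{(i)}$ of the Bernoullis; substituting this into the kernel and centering, the linear-in-$W$ terms in $g_1$ cancel exactly, so that (conditionally on $p_i$) $g_1$ reduces, up to an additive constant, to the purely second-order statistic $\tfrac12\binom{t}{2}^{-1}\sum_{k<l}W_k^{(i)}W_l^{(i)}$. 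This cancellation is precisely the debiasing built into $\hat{V}$, and it is why $\zeta_1 = O(t^{-2})$ rather than $O(t^{-1})$.

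For the point-mass case $\pi=\delta_{p_0}$, the identity above together with the $W_k$ being i.i.d., mean zero and with $E[W_k^2]=\mu_{p_0}$ leaves only the diagonal terms, giving $\zeta_1 = \tfrac14\binom{t}{2}^{-2}\binom{t}{2}\,\mu_{p_0}^2 = \mu_{p_0}^2/(2t(t-1))$. For $\zeta_2$ I will use the global decomposition $g(X_1,X_2) - E[g] = g_1(X_1) + g_1(X_2) - (X_1/t - p_0)(X_2/t - p_0)$, whose three summands are pairwise uncorrelated, together with $\mathrm{Var}\big((X_1/t - p_0)(X_2/t - p_0)\big) = (\mu_{p_0}/t)^2$, to get $\zeta_2 = 2\zeta_1 + \mu_{p_0}^2/t^2$. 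Substituting into the Hoeffding formula and simplifying the resulting rational function in $n,t$ yields $\binom{n}{2}^{-1}\frac{\mu_{p_0}^2}{t(t-1)}\big(n - \tfrac1t\big)$, which is the claimed identity.

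For a general $\pi$ with $m_1 = m_1(\pi)\le 1/2$, I will condition on the latent $p_i$. The global decomposition persists in the form $g - E[g] = g_1(X_1) + g_1(X_2) - (X_1/t - m_1)(X_2/t - m_1)$, so $\zeta_2 = 2\zeta_1 + \mathrm{Var}(X/t - m_1)^2$, where $\mathrm{Var}(X/t) = E_{p\sim\pi}[\mu_p]/t + V(\pi) \lesssim m_1/t + V(\pi)$; hence $\zeta_2 \lesssim \zeta_1 + m_1^2/t^2 + V^2(\pi)$. For $\zeta_1$, the law of total variance splits $\mathrm{Var}(g_1(X))$ into the within-$p$ variance, which by the Bernoulli identity equals $\mu_p^2/(2t(t-1)) + (p-m_1)^2\mu_p/t$ and has expectation $\lesssim m_1^2/t^2 + V(\pi)$ after using $E[\mu_p^2]\le m_2(\pi) = V(\pi)+m_1^2$ and $\mu_p\le\tfrac14$, plus the variance of the conditional mean $\tfrac12 p^2 - m_1 p$; a one-line computation shows this function differs from its value at $p=m_1$ by exactly $\tfrac12(p-m_1)^2$, so the between-$p$ contribution is $\le\tfrac14 E[(p-m_1)^4]\le\tfrac14 V(\pi)$. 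Combining gives $\zeta_1\lesssim m_1^2/t^2 + V(\pi)$, hence $V_{P_\pi}[\hat{V}] \le \tfrac{4}{n}\zeta_1 + \tfrac{4}{n^2}\zeta_2 \lesssim \tfrac1n\big(m_1^2/t^2 + V^2(\pi) + V(\pi)\big)$.

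The main obstacle is the exact bookkeeping in part 1: one must verify that the linear terms genuinely cancel in $g_1$ and then propagate all constants faithfully through the Hoeffding formula to land precisely on the factor $(n-1/t)/\binom{n}{2}$. Once the Bernoulli representation is in place the remaining arithmetic is routine, but it has to be executed carefully because the target is an equality rather than an estimate; the inequalities needed for part 2 are then comparatively soft, following from $|p-m_1|\le 1$, $\mu_p\le 1/4$, and $V(\pi)\le 1/4$.
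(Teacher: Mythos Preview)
Your proposal is correct and complete. The Bernoulli representation is clean: it makes the cancellation of the linear-in-$W$ terms in $g_1$ transparent, and from there the exact values $\zeta_1=\mu_{p_0}^2/(2t(t-1))$ and $\zeta_2=2\zeta_1+\mu_{p_0}^2/t^2$ drop out, giving the factor $n-1/t$ as claimed. For the general case your decomposition $g-E[g]=\tilde g_1(X_1)+\tilde g_1(X_2)-(X_1/t-m_1)(X_2/t-m_1)$ together with the law-of-total-variance bound on $\zeta_1$ is airtight; the key identity $E[g_1(X)\mid p]=\tfrac12(p-m_1)^2+\text{const}$ is exactly what keeps the between-$p$ contribution at $O(V(\pi))$.

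The paper takes a more computational route. For part 1 it states $\sigma_1^2$ and $\sigma_2^2$ directly, without the Bernoulli device. For part 2 it avoids bounding $\zeta_1$ separately by invoking the general inequality $\sigma_1^2\le\sigma_2^2/2$, then controls $\sigma_2^2$ via the law of total variance conditioning on the pair $(p,q)$, expanding $A=E_{p,q}[\mathrm{Var}(h\mid p,q)]$ in raw moments $m_1,\dots,m_4$, rewriting in central moments $d_l$, and simplifying term by term under the constraint $t\ge 3$. Your approach is more structural: the relation $\zeta_2=2\zeta_1+\mathrm{Var}(X/t)^2$ and the Bernoulli identity explain \emph{why} $\zeta_1=O(t^{-2})$ (the debiasing kills the linear part), whereas the paper's moment expansion recovers the same bound by brute force. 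Your argument also happens to work for $t\ge 2$ rather than $t\ge 3$. The paper's shortcut $\sigma_1^2\le\sigma_2^2/2$ saves one conditioning step but trades it for a longer algebraic expansion; your route trades that algebra for the (short) verification that $Q$ and $L$ are conditionally uncorrelated.
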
 \begin{proof}[Proof of \zcref{lemma:variance_ustat}] Henceforth, we let $T(X) = \hat{V}(X)$ to simplify the notation. Since $T$ is a U-statistic with a kernel of degree $2$, it follows by Theorem 3 of \cite[sec.~1.3]{leeUStatisticsTheoryPractice2019} that its variance is given by \begin{align}
V_{P_{\pi}}[T] &= \frac{2(n-2)}{\binom{n}{2}} \cdot \sigma_1^2 + \frac{1}{\binom{n}{2}} \cdot \sigma_2^2  \\
\where \sigma_1^2 &= V[E[\frac{h(X_1,X_2)}{2}|X_1=x_1]]\\
&=\frac{1}{4}\cdot  V\left[\frac{X_1}{2}+m_2(\pi)-2\frac{X_1}{t}m_1(\pi)\right]\\
&=\frac{1}{4}\cdot V\left[ \frac{\binom{X_1}{2}}{\binom{t}{2}}-2\frac{X_1}{t}m_1(\pi)\right]\\
\textand \sigma_2^2 &= \frac{1}{4}\cdot V[h(X_1,X_2)]
\end{align}

\underline{\textbf{Variance for $\pi \in S$}} 

Let $\pi \in S$, there exists $p_0 \in [0,1]$ such that $\pi = \delta_{p_0}$. Note that \begin{equation}
\sigma_1^2 = \frac{1}{2} \cdot \frac{\mu^2_{p_0}}{(t-1)t} \textand
\sigma_2^2 = \frac{\mu^2_{p_0}}{(t-1)t} \cdot (2-1/t)
\end{equation} Thus, we have that \begin{equation}\label{eq:u_stat_var_null}
V_{P_{\pi}}[T] = \frac{\mu_{p_0}^2}{t(t-1)}\cdot\frac{n-1/t}{\binom{n}{2}}
\end{equation}

\underline{\textbf{Variance for general mixing distribution}} 

We use the fact that by Theorem 4 of \citet{leeUStatisticsTheoryPractice2019}, it holds that \begin{equation}
\sigma_1^2 \leq \frac{\sigma_2^2}{2}.
\end{equation} Consequently, \begin{equation}\label{eq:variance_2nd_order}
V_{P_{\pi}}[T] \leq \left(\frac{(n-2)}{\binom{n}{2}} + \frac{1}{\binom{n}{2}}\right) \cdot \sigma_2^2 = \frac{2}{n}\cdot \sigma_2^2
\end{equation} By the law of total variance, we have that \begin{align}
\sigma_2^2 = V_{P_{\pi}}[h(X_1,X_2)] &= A + B\\
\where A &= E_{p,q\iid\pi}V_{X_1|p\sim\Bin(t,p),X_2|q\sim\Bin(t,p)}\left[h(X_1,X_2)\right]\\
B &= V_{p,q\iid\pi}E_{X_1|p\sim\Bin(t,p),X_2|q\sim\Bin(t,p)}\left[h(X_1,X_2)\right]=V_{p,q\iid\pi}[(p-q)^2]
\end{align} Henceforth, we use $m_l$ to denote $m_l(\pi)=E_{p\sim\pi}\left[p^l\right]$ in order to simplify the notation. Term $A$ reduces to \begin{align}
A = &\frac{4 (m_2 - 2 m_3 + m_4)}{t-1}\\
&+ \frac{4 (m_1 - m_2)^2}{t^2}\\
&- \frac{4 (m_2 + 2 m_1 m_2 + 2 m_2^2 - 4 (1 + m_1) m_3 + 3 m_4)}{t}
\end{align} Let $d_l = \int (p-m_1(\pi))^l\ d\pi(p)$, then it can be shown that \begin{align}
m_2 &= d_2 + m_1^2\\
m_3 &= d_3 + 3m_1m_2 - 2m_1^3\\
m_4 &= d_4 + 4 m_1m_3 + 3m_1^4 - 6 m_1^2m_2
\end{align} We get that \begin{align}
A&= 4\frac{m_1^2}{t^2}\frac{2t-1}{t-1}+4\frac{d_2}{(t-1)t}+8\frac{d_3}{t}\frac{t-2}{t-1}(1-2m_1)\\
&+4\frac{m_1^4}{t^2}\frac{2t-1}{t-1}+8\frac{m_1d_2}{(t-1)t^2}(1+2(t-3)t)\\
&-4\frac{d_4}{(t-1)t}(2t-3)-4\frac{d_2^2}{t^2}(2t-1)\\
&-8\frac{m_1^2d_2}{(t-1)t^2}(1+2(t-3)t)-8\frac{m_1^3}{t^2(t-1)}(2t-1)\\
&\lesssim \frac{m_1^2}{t^2}+\frac{d_2}{t^2}+\frac{d_3}{t}+\frac{m_1^4}{t^2}+\frac{m_1d_2}{t} &&\since m_1 \leq \frac{1}{2} \textand t \geq 3\\
&\lesssim \frac{m_1^2}{t^2}+\frac{d_2}{t^2}+\frac{d_3}{t}+\frac{m_1d_2}{t}&&\since m_1 \leq 1 \label{eq:A_equiv}.
\end{align} Regarding $B$, we have that \begin{align}
B &= E_{p,q\iid\pi}(p-q)^4-(2V_1[\pi])^2\\
&= 2m_4(\pi)-8m_1(\pi)m_3(\pi)+6m_2^2(\pi) -(2d_2)^2\\
&= 2 (3 d_2^2 + d_4)-(2d_2)^2\\
&= 2(d_2^2 + d_4) \label{eq:B_equiv}.
\end{align} Thus, plugging \eqref{eq:A_equiv} and \eqref{eq:B_equiv} into \eqref{eq:variance_2nd_order}, we get \begin{align}
V_{P_{\pi}}[T] &\lesssim \frac{1}{n} \cdot \left(\frac{m_1^2}{t^2}+\frac{d_2}{t^2}+\frac{d_3}{t}+\frac{m_1\cdot d_2}{t} + d_2^2 + d_4\right)\\
&\lesssim \frac{1}{n} \cdot \left(\frac{m_1^2}{t^2}+\frac{V(\pi)}{t^2}+\frac{d_2}{t}+\frac{m_1\cdot V(\pi)}{t} + V^2(\pi) + V(\pi)\right)\\
&\lesssim \frac{1}{n} \cdot \left(\frac{m_1^2}{t^2} + V^2(\pi) + V(\pi)\right)
\end{align} where in the penultimate inequality, we use the facts that $V(\pi)=V_{p\sim \pi}\left[p\right]=d_2$, $d_3 \leq d_2$, and $d_2 \leq d_4$.
\end{proof}

\subsubsection{Non-conservative test}\label{sec:proof_truncated_test}

\TruncatedTestOtimality*
\begin{proof}[Proof of \zcref{lemma:rates_truncated_test}]
The theorem follows from \zcref{lemma:rates_truncated_test_ub,lemma:global_minimax_rates_homogeneity}.
\end{proof}

\begin{lemma}\label{lemma:rates_truncated_test_ub}
For testing problem \eqref{eq:homogeneity_testing_unknown}, the debiased Cochran's $\chi^2$ test \eqref{eq:truncated_test} controls type I error by $\alpha$. Furthermore, for $\gamma \geq c/n$ where $c$ is a universal positive constant, there exists a universal positive constant $C$ such that the type II error is bounded by $\beta$ whenever \eqref{eq:debiased_l2_rates} is satisfied.
\end{lemma}
\begin{proof}[Proof of \zcref{lemma:rates_truncated_test_ub}] We consider the case where $t \geq 2$ and  $m_1(\pi)<\frac{1}{2}$. If the second assumption does not follow, replace $Y_i$ by $\tilde{Y}_i = t - Y_i$ and proceed, and do the same with $X_i$. Henceforth, let \begin{align} R(X,Y) = \frac{T(X)}{\hat{\eta}(Y)} &\comma
T(X) = \hat{V}(X) \comma\\
\hat{\eta}(Y) = \max(\hat{\mu}(Y),\gamma) &\textand \eta(\pi) = \max(\mu_{m_1(\pi)},\gamma)
\end{align} to simplify the notation. By \zcref{mean_difference_dominates_std}, it holds that the test controls type I and II errors if the following condition is satisfied \begin{equation}\label{eq:generalized_power_condition}
\sup_{\pi \in S} E_{P_\pi}[R(X,Y)] + \sqrt{\frac{V_{P_{\pi}}[R(X,Y)]}{\alpha}} \leq \inf_{\pi : W_1(\pi,S)\geq \epsilon} E_{P_\pi}[R(X,Y)] - \sqrt{\frac{V_{P_{\pi}}[R(X,Y)]}{\beta}}.
\end{equation} Recall that $T(X)$ is independent from  $\hat{\mu}(Y)$. Therefore, \begin{equation}\label{eq:independence}
E_{P_\pi}[T(X,Y)]=E_{P_\pi}\left[T(X)\right]\cdot E_{P_{\pi}}\left[\hat{\eta}^{-1}(Y)\right].
\end{equation}

\underline{\textbf{Bound under null hypothesis}} 

To bound the LHS of \eqref{eq:generalized_power_condition} note that for $\pi_0 \in  S$, it holds that $E_{P_{\pi_0}}[R(X,Y)]=0$ since $E_{P_{\pi_0}}\left[T(X)\right]=0$ and $\hat{\eta}^{-1}(Y) \geq \gamma^{-1}$ with probability one. Furthermore, by \zcref{lemma:eta_bounds} it holds that \begin{equation}
E_{P_\pi}\left[\hat{\eta}^{-2}(Y)\right] \asymp \eta^{-2} \qquad \for \gamma \gtrsim n^{-1},
\end{equation} together with \eqref{eq:independence} it implies that \begin{equation}
V_{P_{\pi_0}}\left[R(X,Y)\right]=E_{P_{\pi_0}}\left[T^2(X)\right]\cdot E_{P_\pi}\left[\hat{\eta}^{-2}(Y)\right]\asymp  \frac{E_{P_{\pi_0}}\left[T^2(X)\right]}{\eta^{2}(\pi)}.
\end{equation} Let $\pi_0 = \delta_{p_0}$ where $p_0 \in [0,1]$, then using \zcref{lemma:variance_ustat}, we get the following upper-bound \begin{equation}\label{eq:null_bound_2}
V_{P_{\pi_0}}\left[R(X,Y)\right]\asymp  \frac{1}{t(t-1)}\cdot\frac{n-1/t}{\binom{n}{2}} \cdot \left(\frac{\mu_{p_0}}{\max\left(\mu_{p_0},\gamma\right)} \right)^2 \lesssim \frac{1}{t^2n}.
\end{equation} Consequently, we can upper-bound the LHS of \eqref{eq:generalized_power_condition} by \begin{equation}
\sup_{\pi_0 \in S} E_{P_{\pi_0}}[R(X,Y)] + \sqrt{\frac{V_{P_{\pi_0}}[R(X,Y)]}{\alpha}} \lesssim \frac{1}{tn^{1/2}} \qquad \for t\geq 2 \textand
 n \geq 2.\end{equation}
 
\underline{\textbf{Bound under alternative hypothesis}} 

To lower-bound the RHS of \eqref{eq:generalized_power_condition}, consider $\pi \st W_1(\pi,S)\geq \epsilon$. By \zcref{lemma:eta_bounds} \begin{equation}
E_{P_\pi}[R(X,Y)] \geq C_0^{-1} \cdot \frac{E_{P_\pi}[T(X)]}{\eta(\pi)} \textand V_{P_{\pi}}\left[R(X,Y)\right] \leq \frac{C_2+C_1^2}{\eta^2(\pi)}\cdot V_{P_\pi}\left[T(X)\right]
\end{equation} Since $\eta(\pi) >0$, it holds that $E_{P_\pi}[R(X,Y)] \geq \sqrt{V_{P_{\pi}}\left[R(X,Y)\right]/\beta}$ is implied by \begin{equation}
E_{P_\pi}[T(X)] \gtrsim  \sqrt{V_{P_{\pi}}\left[T(X)\right]}
\end{equation} which matches up-to-constants \eqref{eq:expectation_dominates}, following the argument thereafter, we have that \begin{equation}\label{eq:alternative_bound_2}
\inf_{\pi : W_1(\pi,S)\geq \epsilon} E_{P_\pi}[T] - \sqrt{\frac{V_{P_{\pi}}[T]}{\beta}} \gtrsim \epsilon^2 \for \epsilon\gtrsim \frac{1}{t^{1/2}n^{1/4}} + \frac{1}{n^{1/2}}
\end{equation} Finally, using \eqref{eq:null_bound_2} and \eqref{eq:alternative_bound_2}, we have that \eqref{eq:generalized_power_condition} is satisfied if \begin{equation}
\epsilon\gtrsim \frac{1}{t^{1/2}n^{1/4}} + \frac{1}{n^{1/2}}  \period\end{equation}\end{proof}

\begin{lemma}\label{lemma:eta_bounds} For any distribution $\pi$ supported on $[0,1]$. Let \begin{equation}
\eta(\pi) = \max(\mu_{m_1(\pi)},\gamma) \textand \hat{\eta}(Y) = \max(\hat{\mu}(Y),\gamma).
\end{equation} There exists universal positive constants $c_0$, $C_0$, $C_1$ and $C_2$ such that for $\gamma > c_0/n$ it holds that \begin{align}
E[\hat{\eta}(Y)] &\leq C_0 \cdot \eta, \label{eq:claim_1}\\
C^{-1}_0 \cdot \eta^{-1}  \leq E[\hat{\eta}^{-1}(Y)] &\leq C_1 \cdot \eta^{-1} \textand\label{eq:claim_2}\\
C^{-1}_0 \cdot \eta^{-2}  \leq  E[\hat{\eta}^{-2}(Y)] &\leq C_2 \cdot \eta^{-2}.\label{eq:claim_3}
\end{align} 
\end{lemma}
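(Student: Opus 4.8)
The plan is to treat $\hat\mu(Y)$ as a degree-two $U$-statistic with a bounded, non-negative kernel, read off its low-order moments from the Hoeffding decomposition, and then exploit the concentration of $\hat\mu(Y)$ around $\mu_{m_1(\pi)}$ to control the truncated quantity $\hat\eta(Y)=\max(\hat\mu(Y),\gamma)$. Write $m_1=m_1(\pi)$ and $\mu=\mu_{m_1}=m_1(1-m_1)$. A direct computation gives $E_{P_\pi}[\hat\mu(Y)]=\mu$, shows that the first Hoeffding projection of the kernel is $\tfrac{1-2m_1}{2}(Y_1/t-m_1)$, and that the kernel $\tilde h/2$ takes values in $[0,\tfrac12]$. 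Since $V[Y_1/t]=E_{p\sim\pi}[\mu_p]/t+V(\pi)\le 2\mu$ (using $V(\pi)\le m_1(1-m_1)$), the first-projection variance satisfies $\sigma_1^2\lesssim\mu$, so the standard $U$-statistic variance formula gives $V_{P_\pi}[\hat\mu(Y)]\lesssim \mu/n+1/n^2$, and the same decomposition (with $E[g_1^4]\le M^2\sigma_1^2\lesssim\mu$ for the bounded first projection $g_1$ and $\sigma_2^2\lesssim 1$ for the degenerate part) yields a fourth–central–moment bound of the shape $E_{P_\pi}\bigl[(\hat\mu(Y)-\mu)^4\bigr]\lesssim \mu^2/n^2+\mu/n^3+1/n^4$. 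I also record the elementary inequality $\hat\eta(Y)\le\hat\mu(Y)+\gamma$, which is valid because $\hat\mu(Y)\ge0$.

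With these in hand, three of the five inequalities come for free. From $\hat\eta(Y)\le\hat\mu(Y)+\gamma$ we get $E[\hat\eta(Y)]\le\mu+\gamma\le2\eta$, which is the upper bound on $E[\hat\eta(Y)]$; and since $\hat\eta(Y)\ge\gamma>0$, convexity of $x\mapsto x^{-1}$ and $x\mapsto x^{-2}$ on $(0,\infty)$ together with Jensen's inequality gives $E[\hat\eta^{-1}(Y)]\ge(2\eta)^{-1}$ and $E[\hat\eta^{-2}(Y)]\ge(2\eta)^{-2}$, the two lower bounds. It then remains to establish the upper bounds $E[\hat\eta^{-j}(Y)]\lesssim\eta^{-j}$ for $j=1,2$.

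For these I would split on the size of $\mu$ relative to $\gamma$. If $\mu\le K\gamma$ for a fixed universal constant $K$, then $\eta\le K\gamma$, so $\hat\eta(Y)\ge\gamma\ge\eta/K$ and both bounds hold pointwise. If instead $\mu>K\gamma$, then $\eta=\mu$, and I use concentration: on the event $\{\hat\mu(Y)\ge\mu/2\}$ one has $\hat\eta^{-j}(Y)\le(2/\mu)^j$, while on its complement only $\hat\eta^{-j}(Y)\le\gamma^{-j}\le(n/c_0)^j$; hence it suffices to show $\gamma^{-j}\,P_\pi^n\!\bigl(\hat\mu(Y)<\mu/2\bigr)\lesssim\mu^{-j}$. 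For $j=1$, Chebyshev's inequality plus the variance bound gives $P_\pi^n(\hat\mu(Y)<\mu/2)\lesssim V_{P_\pi}[\hat\mu(Y)]/\mu^2\lesssim 1/(n\mu)+1/n^2$, and since this regime forces $n\mu>Kc_0$, multiplying by $\gamma^{-1}\le n/c_0$ returns a quantity $\lesssim\mu^{-1}$ once $c_0$ (and $K$) are chosen large enough.

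The main obstacle is the $j=2$ case, because there the crude estimate $\hat\eta^{-2}(Y)\le\gamma^{-2}\lesssim n^2$ on the bad event is far too lossy — it would require the deviation probability to decay like $n^{-2}\mu^{-2}$, which is out of reach of a second-moment Chebyshev bound. I would instead feed the fourth–central–moment estimate into Chebyshev, obtaining $P_\pi^n(\hat\mu(Y)<\mu/2)\lesssim 1/(n^2\mu^2)+1/(n^3\mu^3)+1/(n^4\mu^4)$; multiplying by $\gamma^{-2}\le n^2/c_0^2$ and using $n\mu>Kc_0$ once more collapses every term to $O(\mu^{-2})=O(\eta^{-2})$. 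The delicate part is purely bookkeeping: the $U$-statistic's second and fourth moments must be shown to scale with powers of $\mu$ rather than with absolute (or $t$-dependent) constants, since that scaling is precisely what makes the product $\gamma^{-2}\,P_\pi^n(\hat\mu(Y)<\mu/2)$ shrink back to the target rate in the regime $n\mu\gtrsim1$, and I would track constants carefully so that the threshold $c_0$ can be fixed independently of $\pi$, $t$, and $n$. Combining the two cases produces $C_1$ and $C_2$, and taking $C_0$ to be the maximum of the constants appearing (e.g.\ $C_0=4$ suffices for the three Jensen-type bounds, enlarged as needed) completes the proof.
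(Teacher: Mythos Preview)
Your argument is correct but follows a genuinely different route from the paper. The paper invokes a Bernstein-type exponential inequality for bounded-kernel $U$-statistics (Hoeffding, Arcones) to obtain tail bounds of the form $P_\pi(\hat\eta<\eta/b)\le 2e^{-cb}$ and $P_\pi(\hat\eta>b\eta)\le e^{-cb}$ for $b\ge 2$ and $\gamma\gtrsim 1/n$, and then recovers $E[\hat\eta]$, $E[\hat\eta^{-1}]$, $E[\hat\eta^{-2}]$ by the layer-cake formula, integrating these exponential tails in $b$. You instead stay at the level of low-order moments: a second-moment Chebyshev bound for $E[\hat\eta^{-1}]$ and a fourth-central-moment Chebyshev bound for $E[\hat\eta^{-2}]$, coupled with the case split $\mu\lessgtr K\gamma$ and the pointwise inequality $\hat\eta\ge\gamma$ on the bad event. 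Your derivation of $E[\hat\eta]\le 2\eta$ via $\hat\eta\le\hat\mu+\gamma$ (using nonnegativity of the kernel) is in fact cleaner than the paper's tail-integration argument for the same bound; the lower bounds via Jensen coincide with the paper's. What the paper's approach buys is robustness: the exponential tail immediately controls $E[\hat\eta^{-k}]$ for every fixed $k$, whereas your method would require a $2k$-th moment computation for each new $k$. What your approach buys is elementariness: you avoid citing an off-the-shelf Bernstein inequality for $U$-statistics and instead do the moment bookkeeping directly through the Hoeffding decomposition, which is self-contained and makes the dependence on $\mu$ transparent.
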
\begin{proof}[Proof of \zcref{lemma:eta_bounds}]

\underline{\textbf{Expectation and variance of $\hat{\mu}$}} 

We consider the case where $t \geq 2$ and  $m_1(\pi)<\frac{1}{2}$. If the second assumption does not follow, replace $Y_i$ by $\tilde{Y}_i = t - Y_i$ and proceed.

It can be checked that $\hat{\mu}$ is an unbiased estimator of $\mu_{m_1(\pi)}$
\begin{equation}\label{eq:mu_hat_mean}
E_{P_\pi}[\hat{\mu}] = E_{P_\pi}[h(X_1,X_2)]=2\cdot \left(m_1(\pi)-m_1^2(\pi)\right)= \mu_{m_1(\pi)}
\end{equation} For the variance, recall that $\hat{\mu}$ is a U-statistic. Therefore, analogously to \eqref{eq:variance_2nd_order}, it follows that \begin{equation}
V_{P_{\pi}}[\hat{\mu}] \leq \frac{1}{2n}\cdot V_{X,Y\iid P_\pi}\left[\tilde{h}(X,Y)\right].
\end{equation} We split the analysis of the variance into two terms \begin{align}
V_{P_{\pi}}[h(X_1,X_2)] &= A + B\\
\where A &= E_{p,q\iid\pi}V_{X_1|p\sim\Bin(t,p),X_2|q\sim\Bin(t,p)}\left[\tilde{h}(X_1,X_2)\right]\\
B &= V_{p,q\iid\pi}E_{X_1|p\sim\Bin(t,p),X_2|q\sim\Bin(t,p)}\left[\tilde{h}(X_1,X_2)\right].
\end{align} By direct computation, the first term can be bounded by \begin{equation}
A = \frac{2}{t^2}\cdot (m_1(\pi)-m_2(\pi)) \cdot \left(t -2\cdot (m_1(\pi)-m_2(\pi))\cdot (2t-1)  \right) \leq \frac{2}{t}\cdot \mu_{m_1(\pi)}.
\end{equation} For the second term, we have that \begin{equation}
B = V_{p,q\iid\pi}\left[\left(p+q-2\cdot p\cdot q\right)^2\right] \leq 8 m_1(\pi) \leq 16 \mu_{m_1(\pi)},
\end{equation} where we used the fact that $m_2(\pi)\leq m_1(\pi) \leq 2\mu_{m_1(\pi)}$ since $m_1(\pi) \leq 1/2$. Consequently, \begin{equation}\label{eq:mu_hat_var}
V_{P_{\pi}}[h(X_1,X_2)] \leq c' \cdot \mu_{m_1(\pi)} \textand V_{P_{\pi}}[\hat{\mu}] \leq c' \cdot \frac{\mu_{m_1(\pi)}}{n}.
\end{equation}

\underline{\textbf{Bernstein bounds for $\hat{\mu}$}} 

Noting that $\tilde{h}$ is bounded, by \citet{hoeffdingProbabilityInequalitiesSums1963}, see also equation 1.4. of \citet{arconesBernsteintypeInequalityUstatistics1995a}, the following Bernstein's inequality for the U-statistics $\hat{U}$ holds by \eqref{eq:mu_hat_mean} and \eqref{eq:mu_hat_var}: \begin{equation}\label{eq:mu_hat_bernstein}
P_\pi\left(\hat{\mu}-\mu_{m_1(\pi)} > \delta \right) \leq \exp\left(- c' \cdot \frac{\delta^2 \cdot n}{\mu_{m_1(\pi)} +\delta}\right),
\end{equation} where $c'$ is a positive constant. Thus, for any $c>0$, it follows that
\begin{align}
P_\pi\left(\hat{\eta} < \frac{\eta}{b} \right) &= P_\pi\left(\hat{\mu} \lor \gamma < \frac{\eta}{b} \right)\\
&\leq P_\pi\left(\hat{\mu}  < \frac{\eta}{b} \right) \quad \for \gamma < \frac{\eta}{b} \text{ otherwise the probability is zero}\\
&= P_\pi\left(\hat{\mu}  < \frac{\mu_{m_1(\pi)}}{b} \right) \quad \since\gamma < \frac{\eta}{b} \text{ if and only if } \mu_{m_1(\pi)} > \gamma \cdot b \for b \geq 1  \\
&\leq P_\pi\left(|\hat{\mu}-\mu_{m_1(\pi)}| \geq \mu_{m_1(\pi)}\cdot (1-1/b) \right)\\
&\leq 2\exp\left(- c' \cdot \mu_{m_1(\pi)} \cdot n \cdot \frac{2}{3} \right) \quad \for b\geq 2 \text{ due to } \eqref{eq:mu_hat_bernstein}\\
&\leq 2\exp\left(- b \cdot c \right) \quad \for \gamma >   \frac{2}{3}\cdot \frac{c}{c'}\cdot\frac{1}{n}. \label{eq:eta_hat_ub1}
\end{align}

Analogously,
\begin{align}
P_\pi\left(\hat{\eta} > b\eta \right) &= P_\pi\left(\hat{\mu} \lor \gamma > b\eta \right)\\
&\leq P_\pi\left(\hat{\eta} > b\eta \right)\\
&= P_\pi\left(\hat{\mu} > \gamma \lor b\eta \right) + P_\pi\left(\gamma  > \hat{\mu} \lor b\eta \right)\\
&\leq P_\pi\left(\hat{\mu} > b\eta \right) &&\for b > 1 \since \eta \geq \gamma\\
&\leq P_\pi\left(\hat{\mu}-\mu_{m_1(\pi)} > c_\gamma \right),
\end{align} where \begin{equation}
c_\gamma = b\eta-\mu_{m_1(\pi)} \geq (b-1) \cdot \begin{cases}
\mu_{m_1(\pi)} &\textif \mu_{m_1(\pi)} \geq \gamma\\
\gamma &\textif \mu_{m_1(\pi)} < \gamma
\end{cases} \geq (b-1)\cdot \gamma
\end{equation} If $\mu_{m_1(\pi)} \geq \gamma$, \begin{align}
P_\pi\left(\hat{\eta} > b\eta \right) &\leq P_\pi\left(\hat{\mu}-\mu_{m_1(\pi)} > (b-1)\mu_{m_1(\pi)} \right) \\
&\leq \exp\left(- c' \cdot \frac{(b-1)^2\cdot \mu_{m_1(\pi)} \cdot n}{b}\right)\\
&\leq \exp\left(- \frac{c'}{2} \cdot b \cdot \mu_{m_1(\pi)} \cdot n\right) &&\for b \geq 2 \text{ due to }\eqref{eq:mu_hat_bernstein}\\
&\leq \exp(- b \cdot c) &&\for \gamma > \frac{2}{c'}\cdot \frac{c}{n}.
\end{align} Otherwise, if $\mu_{m_1(\pi)} < \gamma$, it follows that \begin{align}
P_\pi\left(\hat{\eta} > b\eta \right) &\leq P_\pi\left(\hat{\mu}-\mu_{m_1(\pi)} > (b-1)\gamma \right)\\
&\leq \exp\left(- \frac{c'}{2} \cdot \gamma \cdot b \cdot n\right) &&\for b \geq 2 \text{ due to } \eqref{eq:mu_hat_bernstein}\\
&\leq \exp(- b \cdot c) &&\for \gamma > \frac{2}{c'}\cdot \frac{c}{n}.
\end{align} Thus, it always holds that \begin{equation}
P_\pi\left(\hat{\eta} > b\eta \right) \leq \exp(- b \cdot c) \for \gamma > \frac{2}{c'}\cdot \frac{c}{n}. \label{eq:eta_hat_ub2}
\end{equation}

\underline{\textbf{Bounds of  the expectation of $\hat{\eta}$}} 

Let $c_0$ be any constant such that $
c_0 \geq \frac{2}{c'}\cdot c \lor \frac{2}{3}\cdot \frac{c}{c'}$. \zcref{eq:claim_2} follows by the following argument \begin{align}
E_{P_\pi}\left[\hat{\eta}^{-1}(Y)\right] &= \eta^{-1} \cdot \int_{0}^\infty P_\pi\left(\hat{\eta} < \frac{\eta}{b} \right) db\\
&\leq \eta^{-1} \cdot \left(2 + \int_{2}^\infty P_\pi\left(\hat{\eta} < \frac{\eta}{b} \right) db\right)\\
&\leq \eta^{-1} \cdot C_1  \where C_1 = 2 \left(1 + \frac{\exp(-2 c)}{c} \right).
\end{align} where the last step holds for $\gamma > \frac{c_0}{n}$ due to \eqref{eq:eta_hat_ub1}.

The upper-bound of \eqref{eq:claim_3} follows by
\begin{align}
E_{P_\pi}\left[\hat{\eta}^{-2}(Y)\right] &= 2\eta^{-2} \cdot \int_{0}^\infty P_\pi\left(\hat{\eta}^2 < \frac{\eta^2}{b^2} \right)\cdot b\ db\\
&= 2\eta^{-2} \cdot \int_{0}^\infty P_\pi\left(\hat{\eta} < \frac{\eta}{b} \right)\cdot b\ db\\
&\leq 2\eta^{-2} \cdot \left(2 + \int_{2}^\infty P_\pi\left(\hat{\eta} < \frac{\eta}{b} \right)\cdot b\ db\right)\\
&\leq \eta^{-2} \cdot C_2 \where C_2=4\left(1 + \frac{(2 c + 1)}{c^2}\cdot \exp(-2\cdot c) \right) 
\end{align} where the last step holds for $\gamma > \frac{c_0}{n}$ due to \eqref{eq:eta_hat_ub1}.

The upper-bound of \eqref{eq:claim_1} follows by
\begin{align}
E_{P_\pi}\left[\hat{\eta}(Y)\right] &= \eta \cdot \int_{0}^\infty P_\pi(\hat{\eta} > b\eta ) db\\
&\leq \eta \cdot \left(2+\int_{2}^\infty P_\pi(\hat{\eta} > b\eta ) db \right)\\
&\leq \eta \cdot C_0 \where C_0 = \left(2+ \frac{\exp(-2 c)}{c}\right).
\end{align} where the last step holds for $\gamma > \frac{c_0}{n}$ due to \eqref{eq:eta_hat_ub2}.

Finally, by Jensen's inequality, the lower-bounds of \eqref{eq:claim_2} and \eqref{eq:claim_3} follow
\begin{align}
E_{P_\pi}\left[\hat{\eta}^{-1}(Y)\right] &\geq \left(E_{P_\pi}\left[\hat{\eta}(Y)\right]\right)^{-1} \geq C_0^{-1} \cdot \eta^{-1} \\
E_{P_\pi}\left[\hat{\eta}^{-2}(Y)\right] &\geq \left(E_{P_\pi}\left[\hat{\eta}(Y)\right]\right)^{-2} \geq C_0^{-1} \cdot \eta^{-2} \period\end{align}\end{proof}

\subsection{Lower bounds on the critical separation}\label{sec:global_minimax_homogeneity_lowerbounds}

\begin{lemma}\label{lemma:global_minimax_rates_homogeneity} The critical separation \eqref{eq:homogeneity_critical_separation} is lower-bounded by \begin{equation}
\epsilon_*(n,t) \gtrsim \max\left(\ n^{-1/2} ,\ t^{-1/2}n^{-1/4}\right).
\end{equation}\end{lemma}
\begin{proof}[Proof of \zcref{lemma:global_minimax_rates_homogeneity}]

\underline{\textbf{For $t \lesssim \sqrt{n}$}}, we can rely on the lower-bound construction for fixed effects. Let $\pi_0 = \delta_{0.5}$, and use the lower-bound construction in \zcref{lemma:large_p0_lb}. The claim for this case follows.

\underline{\textbf{For $t \gtrsim \sqrt{n}$}}, we do a similar lower-bound construction to \eqref{eq:tv_distance_large_t}. Let \begin{equation}
\Gamma_0 = \frac{1}{2} \cdot \delta_0 + \frac{1}{2} \cdot  \delta_1  \textand \pi_1 = \left(\frac{1}{2}-\gamma\right) \cdot \delta_0 + \left(\frac{1}{2}+\gamma\right) \cdot  \delta_1
\end{equation} We compute their $\chi^2$ distance by the Ingster–Suslina $\chi^2$-method \citep{ingsterNonparametricGoodnessofFitTesting2003}, it follows that
\begin{align}
\chi^2\left(E_{p\sim \Gamma_0}\left[P_{\delta_p}^n\right]\ ,\ P_{\pi_1}^n\right) &=  E_{p,q\sim \Gamma_0}\left[\ \left(\sum_{j=0}^t \frac{P_{\delta_p}(j)\cdot P_{\delta_q}(j)}{P_{\pi_1}(j)}\right)^n  \ \right]
\end{align} Noting that $
P_{\delta_0} = \delta_0 \textand P_{\delta_1} = \delta_t$, the distance simplifies to \begin{align}
\chi^2\left(E_{p\sim \Gamma_0}\left[P_{\delta_p}^n\right]\ ,\ P_{\pi_1}^n\right) &=  \frac{1}{4}\cdot \left(\frac{1}{P_{\pi_1}(0)}\right)^n +\frac{1}{4}\cdot \left(\frac{1}{P_{\pi_1}(t)}\right)^n.
\end{align} Noting that $
P_{\pi_1} = \left(\frac{1}{2}-\gamma\right) \cdot \delta_0 + \left(\frac{1}{2}+\gamma\right) \cdot  \delta_t$, we get that \begin{align}
\chi^2\left(E_{p\sim \Gamma_0}\left[P_{\delta_p}^n\right]\ ,\ P_{\pi_1}^n\right) &=  \frac{1}{4\cdot 2^n}\cdot\left[ \left(1-2\gamma\right)^n +\left(1+2\gamma\right)^n\right]\\
&\leq \frac{1}{4\cdot 2^n}\cdot\left[ \exp\left\{-2n\gamma\right\} +\exp\left\{2n\gamma\right\}\right]\\
&= \frac{\cosh{2n\gamma}}{2^{n+1}}\\
&\leq \frac{\exp\{2n^2\gamma^2\}}{2^{n+1}}  \period
\end{align} Thus, we have that if we choose $\gamma$ such that \begin{equation}\label{eq:gamma_choice}
\gamma = \sqrt{\frac{n+1}{2n^2} \cdot \log 2+ \frac{\log C^2_\alpha}{2n^2}}\comma
\end{equation} we control the $\chi^2$ distance between the distributions by $C_\alpha^2$: \begin{equation}
\chi^2\left(E_{p\sim \Gamma_0}\left[P_{\delta_p}^n\right]\ ,\ P_{\pi_1}^n\right) \leq C^2_\alpha \period
\end{equation} Consequently, by \zcref{lemma:general_lb}, it follows that $
R_*(\epsilon) > \beta$ for $\epsilon \geq W_1(\pi_1,S)$. Note that for $\gamma<\frac{1}{4}$, it follows that: \begin{equation}
W_1(\pi_1,S) = \min_{p_0\in[0,1]}W_1(\pi,\delta_{p_0})=\min_{p_0\in[0,1]} \frac{1}{2}+\gamma-2\gamma p_0 = \frac{1}{2}-\gamma \geq \frac{\gamma}{2}\period
\end{equation} Finally, by \eqref{eq:gamma_choice}, $\gamma < 1/4$ is satisfied for $n$ large enough. Consequently, we have that $
R_*(\epsilon) > \beta$ for $\epsilon \gtrsim \frac{1}{\sqrt{n}}$.
\end{proof}

\section{Code to reproduce simulations and applications}

The code to reproduce the simulations and experiments can be accessed at \begin{center}
\url{https://github.com/lkania/Testing-Random-Effects}
\end{center} 

\section{Simulations}\label{sec:simulations}

\subsection{Homogeneity testing with a reference effect}\label{sec:homogeneity_testing_simulations}

Due to symmetry, we consider only $\pi_0 = \delta_{p_0}$ for $0 \leq p_0 \leq 1/2$. We assess the performance of various tests under the distributions used to derive the lower bounds in \zcref{lemma:RandomEffectsHomogeneityTesting}. The first family of distributions perturbs the null distribution's mean \begin{equation}\label{eq:family_perturb_first_moment}
\pi=\delta_{p_0+\epsilon} \quad \for 0 \leq \epsilon \leq 1-p_0 \comma
\end{equation} the second family matches the mean \begin{equation}\label{eq:family_match_first_moment}
\pi = \frac{1}{2} \cdot \left(\delta_{p_0+\epsilon} + \delta_{p_0-\epsilon} \right) \quad \for 0 \leq \epsilon \leq p_0 \comma
\end{equation} while the third family perturbs the probability assigned to the null hypothesis's point mass \begin{equation}\label{eq:family_perturb_probability}
\pi = (1-\epsilon)\cdot \delta_{p_0} + \epsilon \cdot \delta_1 \quad \for 0 \leq \epsilon \leq 1 \period
\end{equation} \zcref[S]{fig:homogeneity_power_per_lb} shows the power of the tests as a function of the distance to the null distributions $\pi_0=\delta_{0.5}$. In all cases, the local minimax test achieves good power.

\begin{figure}[!ht]
\centering
\includegraphics[width=0.85\linewidth]{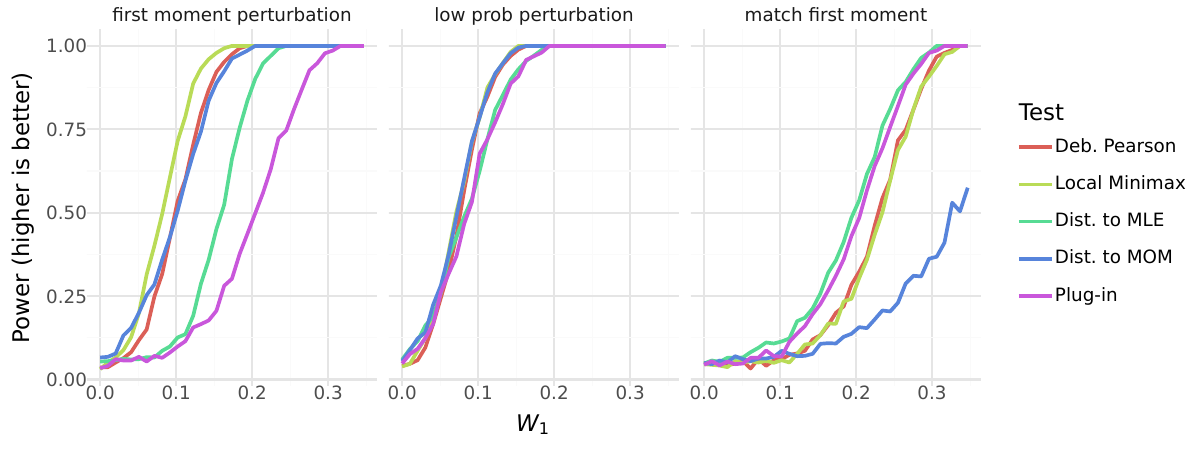}
\caption{Power of the test as a function of the distance $W_1(\pi,\pi_0)$, where the null distribution is $\pi_0=\delta_{0.5}$ and the alternative distribution $\pi$ belongs to the family of distributions \eqref{eq:family_perturb_first_moment} on the left panel, \eqref{eq:family_perturb_probability} on the center panel, and \eqref{eq:family_match_first_moment} on the right panel.}
\label{fig:homogeneity_power_per_lb}
\end{figure}

\zcref[S]{fig:homogeneity_w1_per_lb} displays the local critical separation for \eqref{eq:family_perturb_first_moment},\eqref{eq:family_match_first_moment}, and \eqref{eq:family_perturb_probability}. Each family plays a specific role in capturing the behavior predicted by \eqref{eq:local_critical_separation}. The family \eqref{eq:family_perturb_probability} limits the power for small null hypotheses close to the origin, \eqref{eq:family_perturb_first_moment} captures the linear growth with respect to $p_0$ for moderate $p_0$ values, while \eqref{eq:family_match_first_moment} captures the $\sqrt{p_0}$ dependence for null hypotheses that are close to $0.5$.

\begin{figure}[!ht]
\centering
\includegraphics[width=\linewidth]{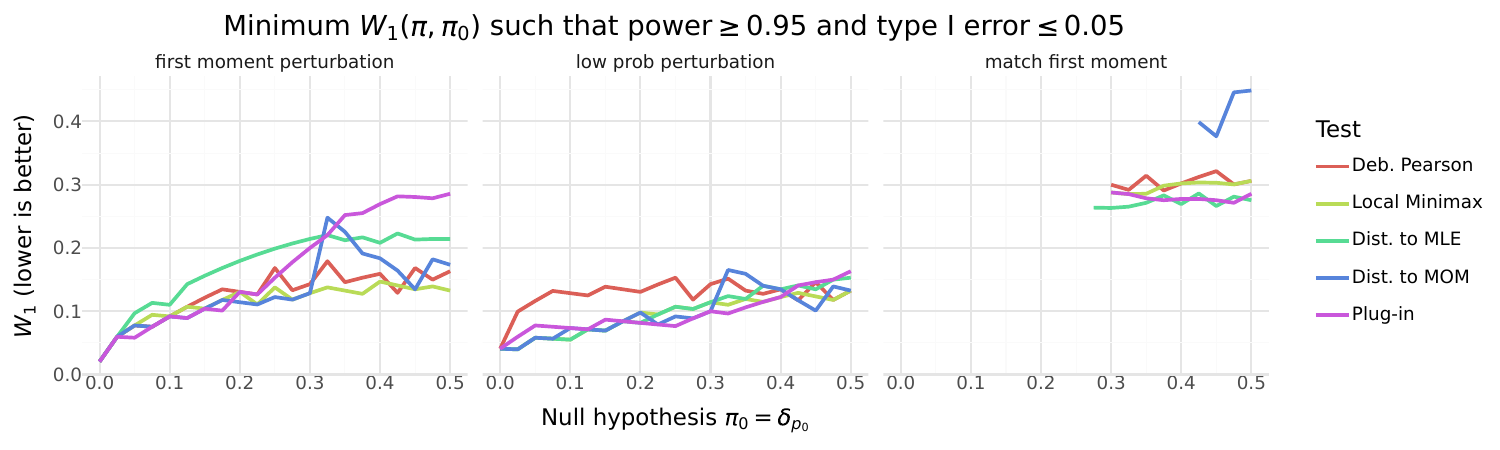}
\caption{Minimum $W_1$ required to obtain high power and low type I error for the families of distributions \eqref{eq:family_perturb_first_moment},\eqref{eq:family_match_first_moment}, and \eqref{eq:family_perturb_probability} as a function of the null distribution. For the right panel, if a line is not drawn for a given null distribution, it signifies that the corresponding test could not control the type I error.}
\label{fig:homogeneity_w1_per_lb}
\end{figure}

\zcref[S]{fig:homogeneity_w1} shows the empirical local critical separation when considering all families of distributions, approximating the local critical separation in \eqref{eq:small_t_rates}. All tests have a comparable performance when considering all three kinds of mixing distributions.
\begin{figure}[!ht]
\centering
\includegraphics[width=0.9\linewidth]{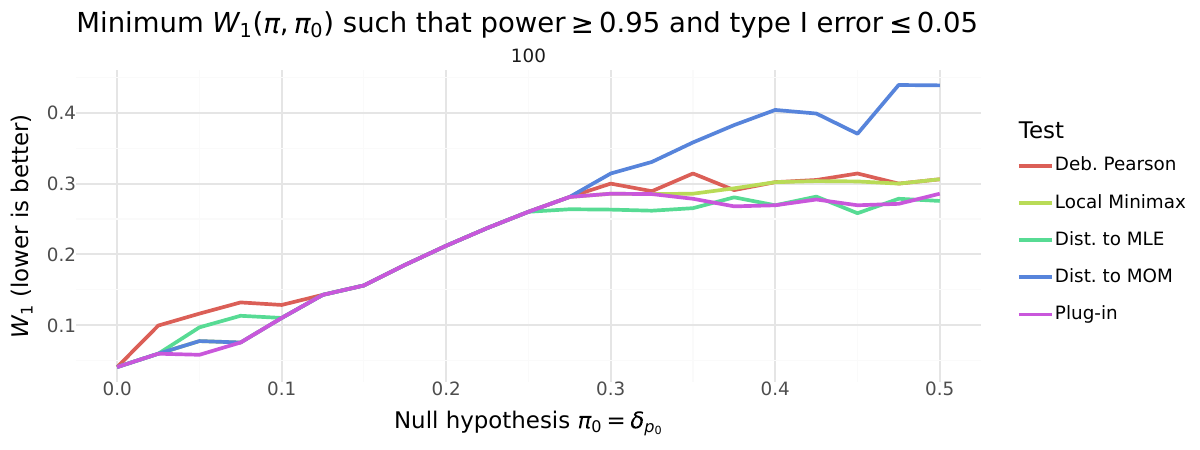}
\caption{Minimum $W_1$ required to obtain high power and low type I error as a function of the null distribution.}
\label{fig:homogeneity_w1}
\end{figure}

\subsection{Homogeneity testing without a reference effect}\label{sec:homogeneity_testing_unkown_simulations}


Three statistics are evaluated through simulation. The first is the Cochran's $\chi^2$ test statistic \citep{cochranMethodsStrengtheningCommon1954} $$
t\cdot \frac{\tilde{V}(X)}{\mu_{\hat{m}_1(X)}},$$ where $\tilde{V}(X)$ is the empirical variance $$\tilde{V}(X) = \frac{1}{n-1}\sum_{i=1}^n\left(\hat{m}_1(\pi)-\frac{X_i}{t}\right)^2.$$ The statistic is undefined when the numerator and the denominator are both zero, which happens with probability one when the mixing distribution is a point mass at the origin. To fix this issue, we threshold the denominator whenever it goes below $\gamma$: $$
C(X)=t\cdot \frac{\tilde{V}(X)}{ \max\left(\mu_{\hat{m}_1(X)} , \gamma\right)}.$$ Henceforth, we refer to the above as the modified Cochran's $\chi^2$ test statistic. We set $\gamma = 10^{-10}$ for all simulations in this section. We consider both the finite sample and asymptotically valid Cochran's $\chi^2$ tests \begin{equation}\label{eq:modified_cochran_chi2_test}
\psi_{C}(X) = I\left(C(X) \geq \sup_{\pi \in S}q_{\alpha}(P_{\pi},C)\right) \textand \psi_{AC}(X) = I\left(C(X) \geq \frac{\chi^2_{n-1,\alpha}}{n-1}\right) \comma
\end{equation} where $\chi^2_{n-1,\alpha}$ is the $1-\alpha$ quantile of the chi-squared distributions with $n-1$ degrees of freedom. The second statistic is based on \eqref{eq:truncated_test} but using the same normalization as the modified Cochran's $\chi^2$, we call the corresponding test the debiased Cochran's $\chi^2$ (V.1), \begin{equation}\label{eq:r1_test}
R_1(X) = t \cdot \frac{\hat{V}(X)}{\max\left(\mu_{\hat{m}_1(X)},\gamma\right)} \period \textand \psi_{R_1}(X) = I\left( R_1(X) \geq \sup_{\pi \in S}q_{\alpha}(P_{\pi},R_1)\right) \period
\end{equation} Finally, we consider \eqref{eq:truncated_test} and define the corresponding test analogously, called debiased Cochran's $\chi^2$ (V.2) \begin{equation}\label{eq:r2_test}
R_2(X) = t \cdot \frac{\hat{V}(X)}{\max\left(\hat{\mu}(X),\gamma\right)} \textand
\psi_{R_2}(X) = I\left( R_2(X) \geq \sup_{\pi \in S}q_{\alpha}(P_{\pi},R_2)\right) \period
\end{equation} We remark that $\psi_{R_1}$ and $\psi_{R_2}$ are expected to have similar performance since any denominator that estimates $\max(m_1(\pi),1-m_1(\pi))$ should work. \zcref[S]{fig:unknown_homogeneity_distribution} shows the distribution of the three statistics under the distribution $P_\pi$ where $\pi=\delta_{0.5}$. The bias is substantially reduced when using $R_{1}$ or $R_2$ rather than Cochran's statistic. The same can be observed for point masses closer to the origin; see \zcref[S]{sec:distribution_simulation_homogeneity_testing}. 
\begin{figure}[!ht]
\centering
\includegraphics[width=0.95\linewidth]{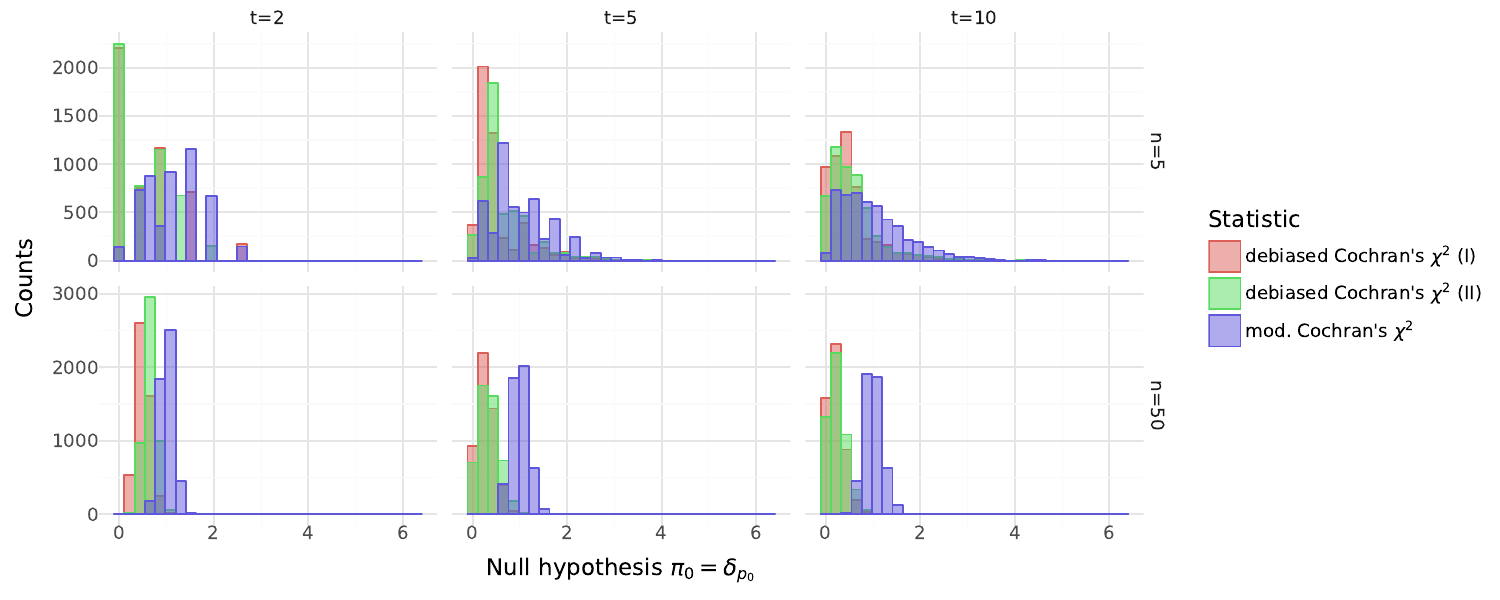}
\caption{Distribution of modified and debiased Cochran's $\chi^2$ test statistics under the mixing distribution $\pi=\delta_{0.5}$.}
\label{fig:unknown_homogeneity_distribution}
\end{figure}

\zcref[S]{fig:unkown_homogeneity_quantiles} displays the $1-\alpha$ quantile of the statistics for $\alpha=0.05$ for mixing distributions $\pi=\delta_{p_0}$ where $p_0 \in [0,0.5]$. Additionally, \zcref{fig:unknown_homogeneity_validity} illustrates that using the maximum quantile over $S$ leads to conservative thresholds for point masses close to the origin.

\begin{figure}[!ht]
\centering
\includegraphics[width=\linewidth]{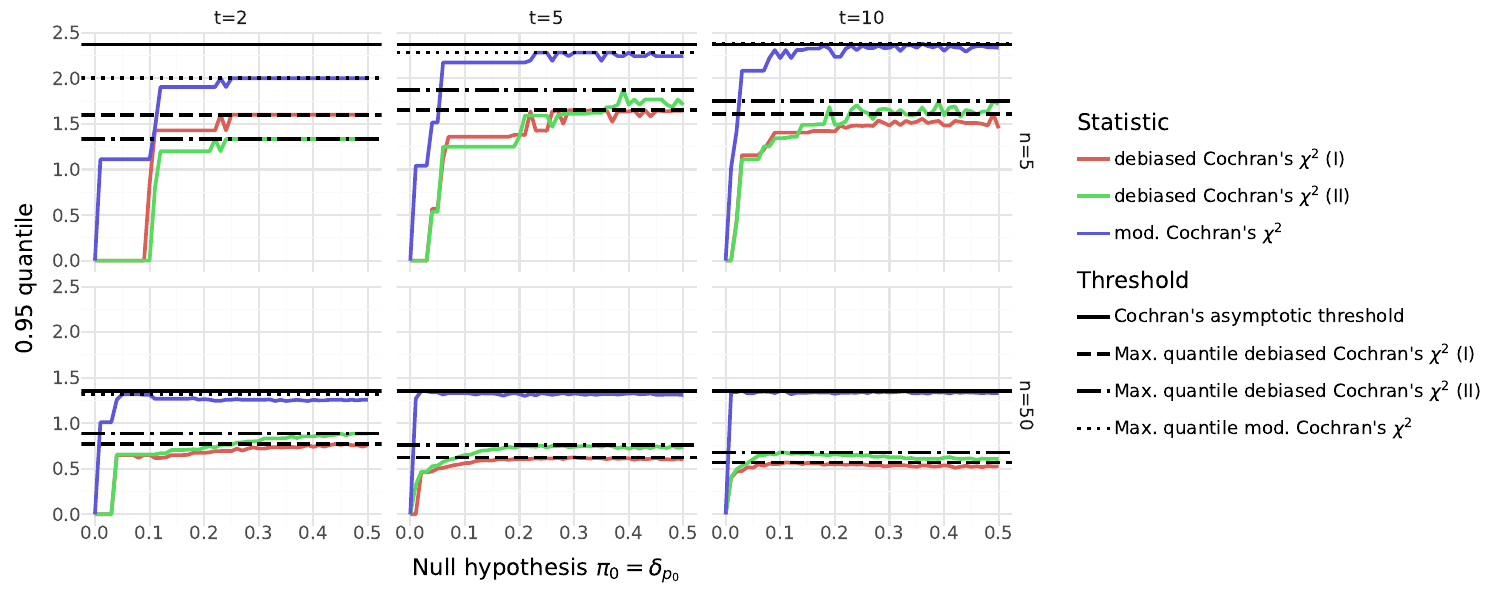}
\caption{0.95\% quantile as a function of the null hypothesis distribution for the studied statistics. Additionally, the asymptotic threshold used by Cochran's $\chi^2$ test \eqref{eq:modified_cochran_chi2_test} and the maximum quantile of each statistic are displayed.}
\label{fig:unkown_homogeneity_quantiles}
\end{figure}

\begin{figure}[!ht]
\centering
\includegraphics[width=\linewidth]{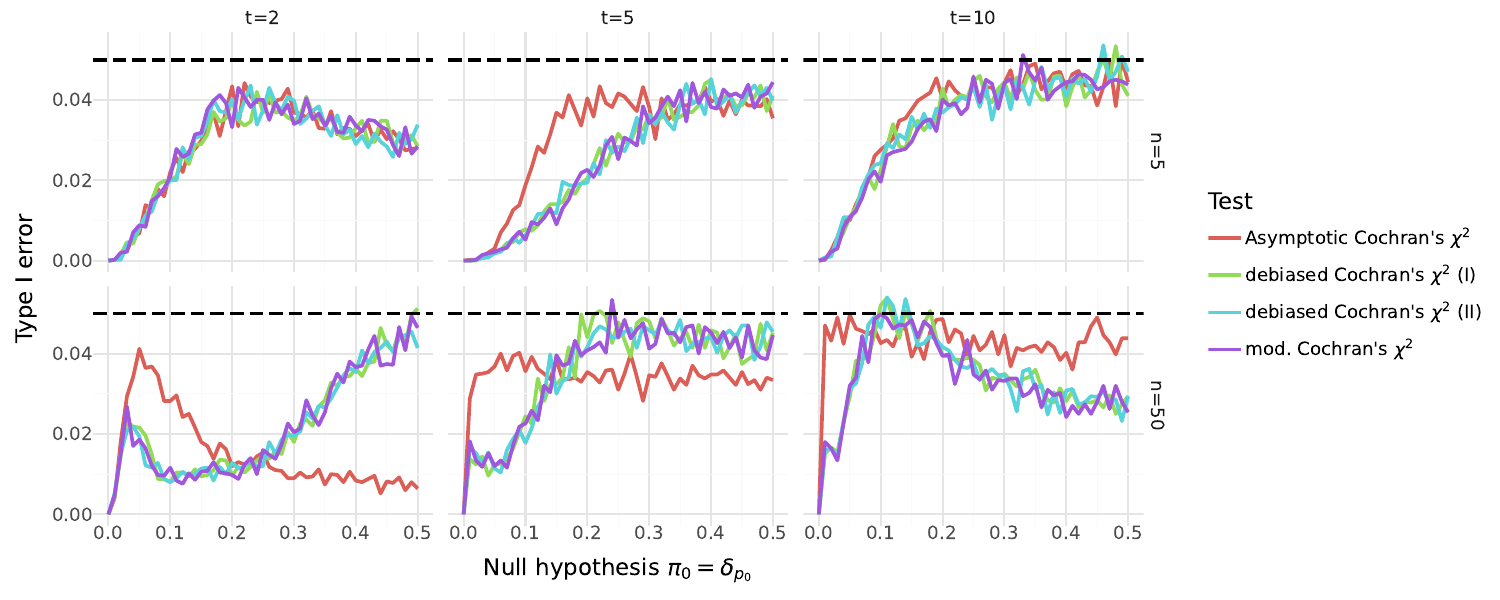}
\caption{Type I error of tests \eqref{eq:modified_cochran_chi2_test},\eqref{eq:r1_test} and \eqref{eq:r2_test} as a function of the null distribution. Note the conservative behavior whenever the number of trials $t$ is small.}
\label{fig:unknown_homogeneity_validity}
\end{figure}

To analyze the power of the proposed tests, we consider the families of distributions \eqref{eq:family_match_first_moment} and \eqref{eq:family_perturb_probability} for $p_0=0.5$, which were used for constructing the lower-bound \zcref{lemma:global_minimax_rates_homogeneity}. In our simulations, all tests agree except when the mixing distribution's moments cannot be reliably estimated, i.e., when $t$ is small relative to $n$. In that case, the tests valid for finite samples improve over the asymptotic Cochran's $\chi^2$ test as shown in \zcref[S]{fig:unknown_homogeneity_power}. \begin{figure}[ht]
\centering
\includegraphics[width=0.8\linewidth]{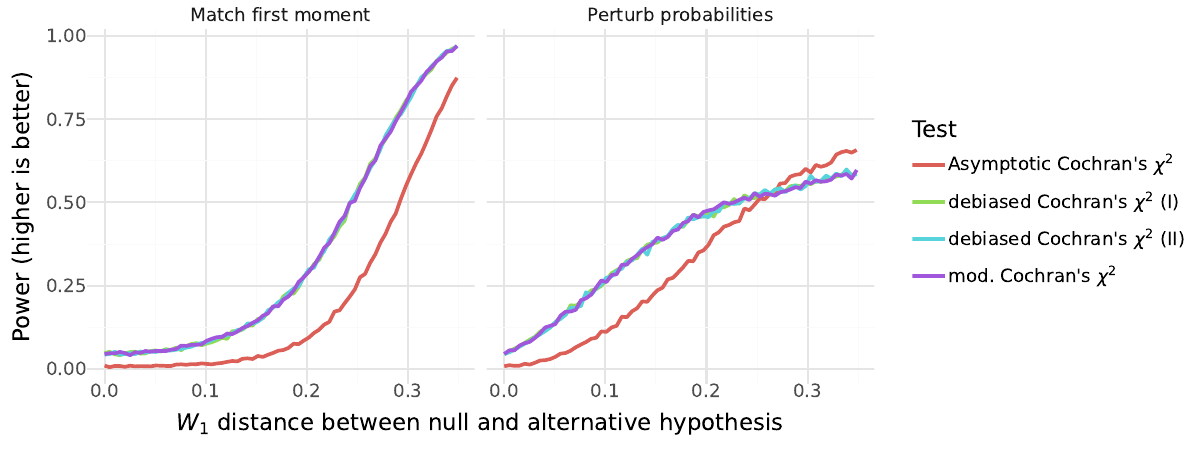}
\caption{Power of the test as a function of the distance $W_1(\pi,\pi_0)$ for $t=2$ and $n=50$. The null mixing distribution is $\pi_0=\delta_{0.5}$, and the alternative mixing distribution $\pi$ belongs to the family of distributions \eqref{eq:family_perturb_probability} on the left panel, and \eqref{eq:family_match_first_moment} on the right panel. }
\label{fig:unknown_homogeneity_power}
\end{figure} We conclude that any of the considered test statistics might be used in practice.

\subsection{Additional simulations for homogeneity testing without a reference effect}\label{sec:distribution_simulation_homogeneity_testing}

\begin{figure}[H]
\centering
\includegraphics[width=\linewidth]{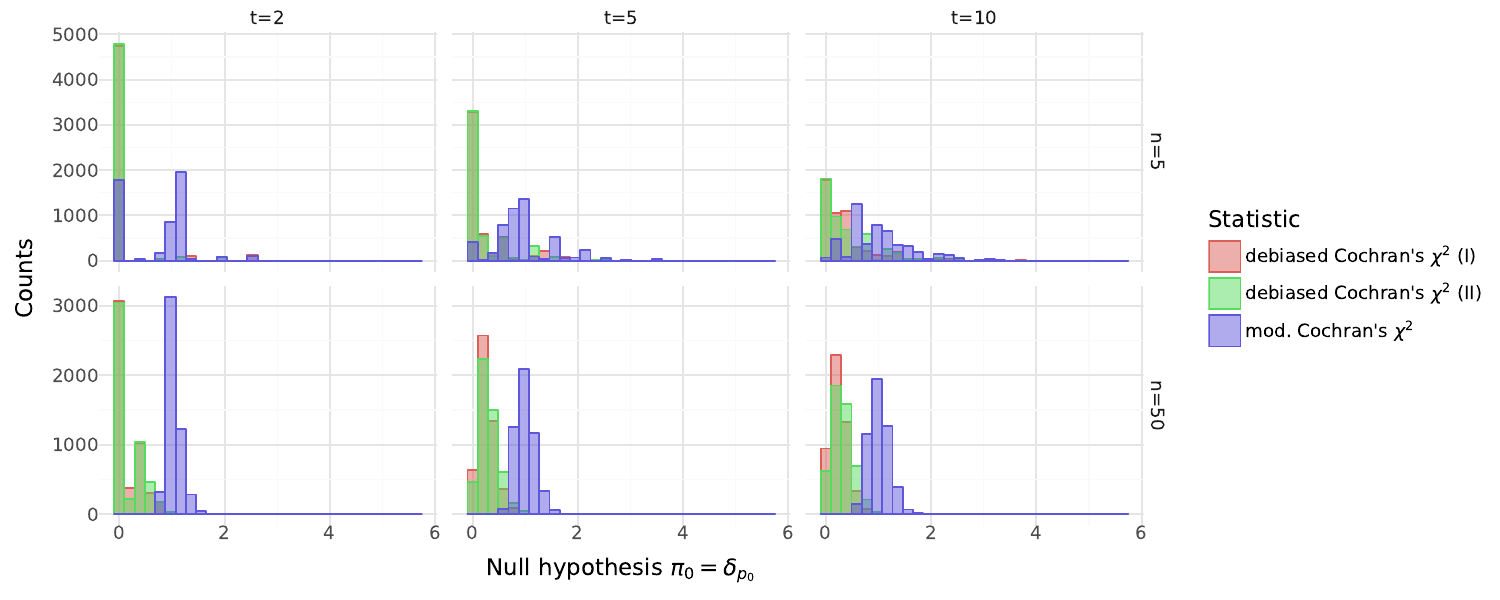}
\caption{Distribution of modified and debiased Cochran's $\chi^2$ test statistics under the mixing distribution $\pi=\delta_{0.1}$.}
\end{figure}

\begin{figure}[H]
\centering
\includegraphics[width=\linewidth]{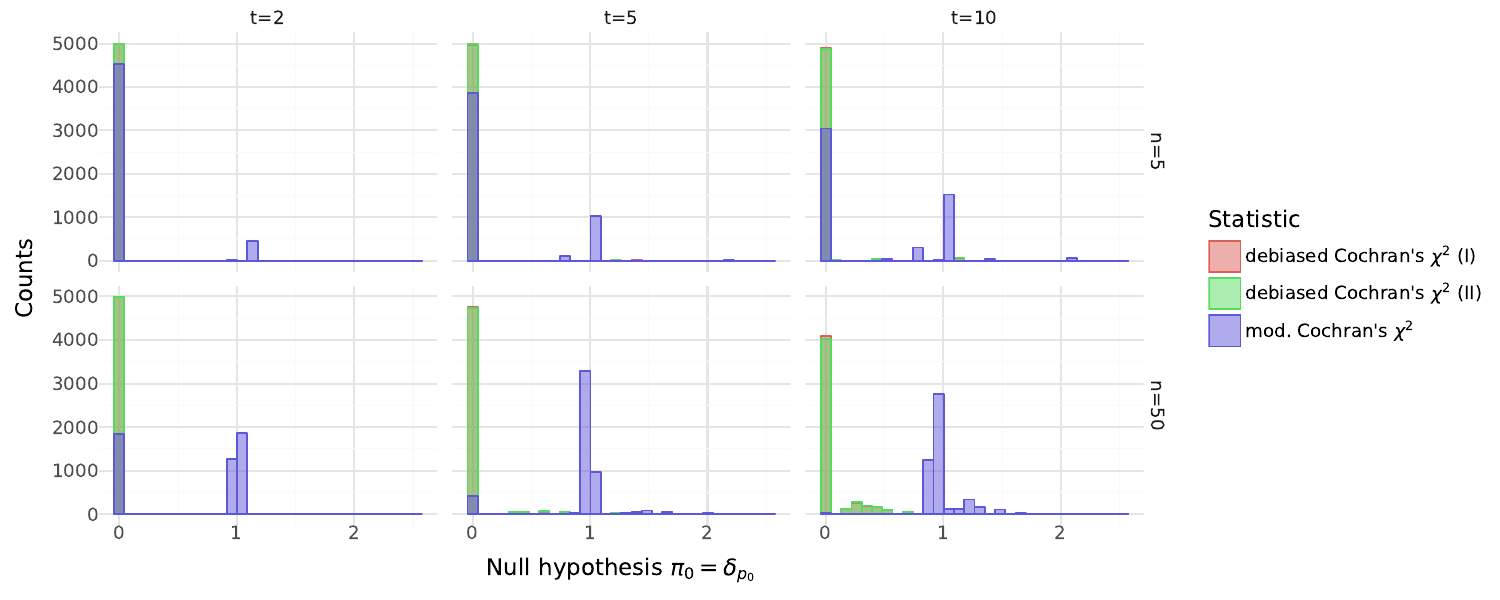}
\caption{Distribution of modified and debiased Cochran's $\chi^2$ test statistics under the mixing distribution $\pi=\delta_{0.01}$.}
\end{figure}

\section{Modeling political outcomes from 1976 to 2004}\label{sec:county}

\citet{tian2017} and \citet{vinayakMaximumLikelihoodEstimation2019} consider the problem of estimating the underlying mixing distribution of the counties' political leaning. They analyze data from 3,109 U.S. counties, recording the number of times the Republican Party won in each county during the eight presidential elections from 1976 to 2004. \citet{tian2017} used the method of moments (MOM) to estimate the mixing distribution, while \citet{vinayakMaximumLikelihoodEstimation2019} applied maximum likelihood estimation (MLE). \citet{vinayakMaximumLikelihoodEstimation2019} noted that the MLE and MOM estimators look qualitatively different, although they match their first 8 moments. Here, we extend their analysis by applying the goodness-of-fit tests from \zcref[S]{sec:general_testing} to assess which method better models the underlying distribution.

\begin{figure}[H]
\centering
\includegraphics[width=\linewidth]{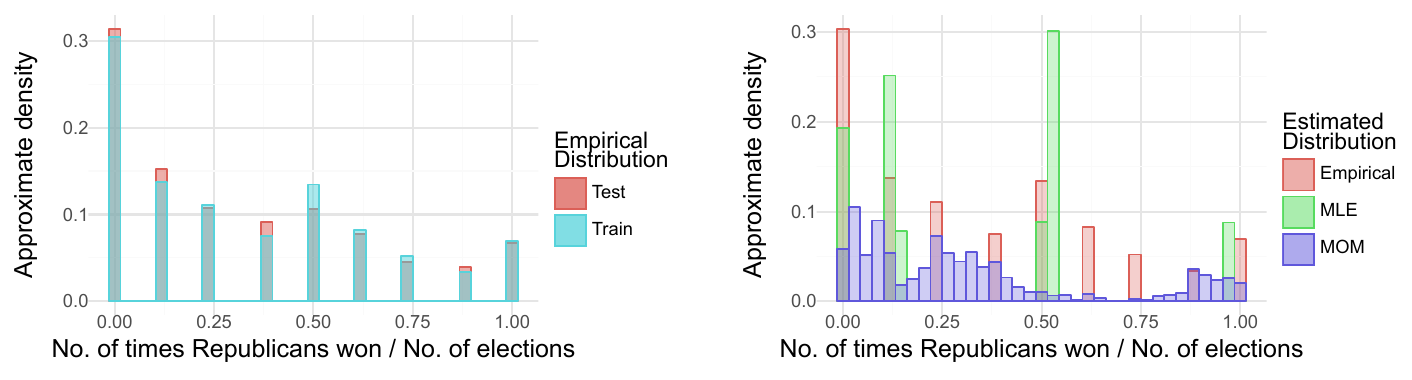}
\caption{(Left panel) Empirical distribution of the training and test dataset. (Right panel) Estimated distributions together with the empirical distribution of the training dataset.}
\label{fig:counties_data}
\end{figure}

We randomly split the counties into two halves, each containing approximately $n \approx 1,555$ counties with $t = 8$. We refer to these halves as the training and test datasets, shown in \zcref{fig:counties_data}. Using the training dataset, we compute the empirical, MLE, and MOM estimators of the mixing distribution, illustrated in the right panel of \zcref{fig:counties_data}. Although the MLE and MOM estimators match their first eight moments, they yield distinct qualitative interpretations of the data; see \zcref{fig:counties_moments}. The MLE estimator clusters around three points, corresponding to counties that consistently vote Democratic, consistently vote Republican, or are swing counties. In contrast, the MOM estimator suggests greater heterogeneity in political leanings.

\begin{figure}[H]
\centering
\includegraphics[width=0.55\linewidth]{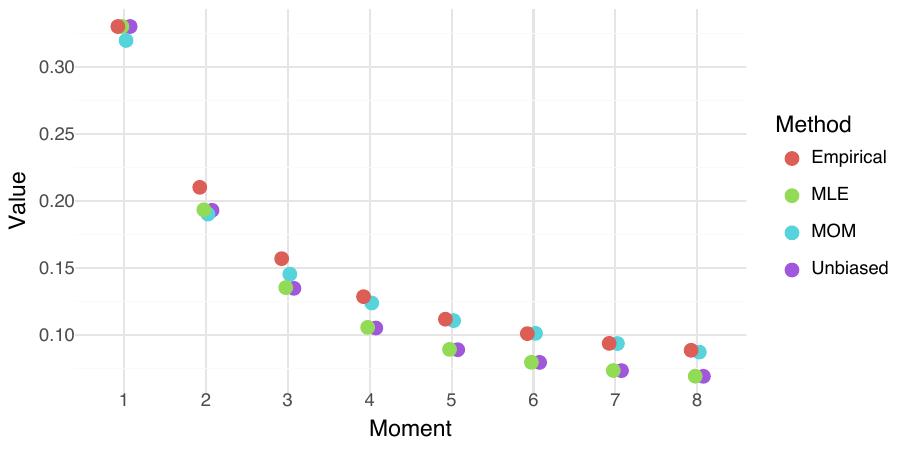}
\caption{First 8 moments of the empirical, MLE, and MOM estimators of the mixing distribution on the first half of the data. Additionally, the unbiased estimator of each moment is presented; see their definition in \eqref{eq:unbiased_moment_estimator} of \zcref[S]{sec:gof_test_definitions}.}
\label{fig:counties_moments}
\end{figure}

We use the second half of the data to evaluate whether the goodness-of-fit tests from \zcref[S]{sec:general_testing} can distinguish the estimated distributions from the observed data. The P-value represents the smallest significance level at which a test rejects the null hypothesis that the second half of the data follows the null mixing distribution. A higher P-value suggests the test finds it harder to differentiate the data from the null distribution. \zcref[S]{tab:counties_pvalues} compiles the computed P-values for each test and null distribution. Most tests reject the MOM and empirical estimates of the mixing distribution. Therefore, we may conclude that the MLE provides a more accurate fit. Additionally, the global minimax,
Debiased Pearson’s $\chi^2$, modified Pearson’s $\chi^2$, and modified likelihood ratio test agree in all cases.

\begin{table}[H]
    \centering
\begin{tabular}{lrrr}
\toprule
Test & MLE & MOM & Empirical \\
\midrule
Plug-in \eqref{eq:w1_plugin_test} & 0.07 & 0 & 0.99 \\
Global minimax \eqref{eq:global_minimax_test}  & 0.13 & 0 & 0\\
Debiased Pearson's $\chi^2$ \eqref{eq:fingerprint_test} & 0.14 & 0 & 0 \\
Modified Pearson's $\chi^2$ \eqref{eq:modified_pearson_chi2} & 0.17 & 0 & 0 \\
Modified likelihood ratio \eqref{eq:modified_LRT} & 0.16 & 0 & 0 \\
Distance to MOM estimator \eqref{eq:dist_to_MOM} & 0.37 & 0.94 & 0.26 \\
Distance to MLE estimator \eqref{eq:dist_to_MLE} & 0.65 & 0 & 0.10 \\
\bottomrule
\end{tabular}
\caption{P-values for goodness-of-fit tests of the MLE, MOM, and empirical mixing distributions on the second half of the county-level data. See \zcref[S]{sec:general_testing} and \zcref[S]{sec:gof_test_definitions} for their definitions.}
\label{tab:counties_pvalues}
\end{table}


\section{Definitions of testing procedures}

The following sections compile the definitions used across the simulations and applications for Goodness-of-fit testing (\zcref{sec:gof_test_definitions}), homogeneity testing with a reference effect (\zcref{appx:homogeneity_testing_simple_null}), and homogeneity testing without a reference effect (\zcref{appx:homogeneity_testing_composite_null}).

\subsection{Goodness-of-fit testing}\label{sec:gof_test_definitions}

Recall that $n_j$ is the observed fingerprint \eqref{eq:observed_fingerprint} and $b_{j,t}(\pi)$ is the expected fingerprint \eqref{eq:expected_fingerprint}. It holds that \begin{equation}
n_j \sim \Bin(n,b_{j,t}(\pi)) \comma E\left[\frac{n_j}{n}\right]=b_{j,t}(\pi) \textand V\left[\frac{n_j}{n}\right]=\frac{\mu_{b_{j,t}(\pi)}}{n}.
\end{equation} In the following, we define some test statistics that appear in the simulations but are not defined in the main text.

\paragraph{Modified Pearson's $\chi^2$ test} \begin{equation}\label{eq:modified_pearson_chi2}
T = \frac{1}{t+1}\sum_{j=0}^t \frac{\left(\frac{n_j}{n}-b_{j,t}(\pi_0)\right)^2}{\max\left(\mu_{b_{j,t}(\pi_0)},\gamma \right)} \textand \psi = I\left(T \geq q_{\alpha}\left(P_{\pi_0},T\right)\right).
\end{equation} The unmodified test uses $\gamma=0$, for our simulations, we use $\gamma =10^{-10}$.

\paragraph{Modified Likelihood Ratio test (LRT)}  \begin{equation}\label{eq:modified_LRT}
T = \left| \frac{1}{t+1}\sum_{j=0}^t \frac{n_j}{n}\cdot \log\left(\frac{\max\left(n_j/n,\gamma\right)}{\max\left(b_{j,t}(\pi_0),\gamma\right)}\right) \right| \textand \psi = I\left(T \geq q_{\alpha}\left(P_{\pi_0},T\right)\right).
\end{equation}The unmodified test uses $\gamma=0$, we use $\gamma =10^{-10}$.

\paragraph{Modified Maximum Likelihood Estimation (MLE) test}

The Maximum Likelihood Estimator (MLE) \citep{vinayakMaximumLikelihoodEstimation2019} is defined as any solution to the following optimization \begin{align}
\hat{\pi}_{\text{MLE}} \in &\argmax_{\pi \in D} \sum_{i=1}^n \log\left(E_{p\sim \pi}\left[\binom{t}{X_i}p^{X_i}(1-p)^{t-X_i}\right]\right)\\
&=\argmax_{\pi \in D} \sum_{j=0}^t \frac{n_j}{n}\cdot \log\left( E_{p\sim \pi}\left[\binom{t}{j}p^j(1-p)^{t-j}\right]\right).
\end{align} It minimizes the Kullback–Leibler divergence
between the observed and expected fingerprints \begin{equation}
\hat{\pi}_{\text{MLE}} \in \argmin_{\pi \in D} \text{KL}\left(\hat{b},b(\pi)\right)  \where \hat{b}_j = \frac{n_j}{n} \textand b_{j,t}(\pi)=E_{p\sim \pi}\left[\binom{t}{j}p^j(1-p)^{t-j}\right].
\end{equation} To avoid numerical issues during the simulation, we implement the following modified MLE optimization \begin{equation}
\hat{\pi}_{\text{MLE}} \in \argmax_{\pi \in D} \sum_{j=0}^t \frac{n_j}{n}\cdot \log\left( \max\left(\gamma\ ,\  E_{p\sim \pi}\left[\binom{t}{j}p^j(1-p)^{t-j}\right]\right)\right).
\end{equation} The original statistic can be recovered using $\gamma=0$, we use $\gamma =10^{-10}$. The corresponding test is defined as follows
\begin{equation}\label{eq:dist_to_MLE}
T = W_1(\hat{\pi}_{\text{MLE}},\pi_0) \textand \psi = I\left(T \geq q_{\alpha}\left(P_{\pi_0},T\right)\right).
\end{equation}

\paragraph{Method of Moments (MOM) test}

The Method of Moments (MOM) \citep{tian2017} minimizes the $L_1$ distance between the observed and expected moments. Let $\hat{m}_j(X)$ be the unbiased estimator of the $j$-th moment \begin{equation}\label{eq:unbiased_moment_estimator}
\hat{m}_j(X) = \frac{1}{n}\sum_{i=1}^n \frac{\binom{X_i}{j}}{\binom{t}{j}} \quad \for j \in \{1,\dots,t\}\comma
\end{equation} then the MOM estimator is defined as any solution to the following optimization \begin{equation}
\hat{\pi}_{\text{MOM}} \in \argmin_{\pi \in \mathcal{D}} \sum_{j=0}^t \left|E_{p \sim \pi}[p^j] - \hat{m}_j(X)\right|.
\end{equation} The corresponding test is defined as follows \begin{equation}\label{eq:dist_to_MOM}
T = W_1(\hat{\pi}_{\text{MOM}},\pi_0) \textand \psi = I\left(T \geq q_{\alpha}\left(P_{\pi_0},T\right)\right).
\end{equation}

\subsection{Homogeneity testing with a reference effect}\label{appx:homogeneity_testing_simple_null}

In the following, let $\pi_0=\delta_{p_0}$.

\paragraph{$\ell_2$ test}
The following test statistic is used
\begin{equation}
T = \frac{1}{n}\sum_{i=1}^n \left(\frac{X_i}{t_i}-p_0\right)^2.
\end{equation} The corresponding test is $
\psi = I\left(T \geq q_{\alpha}\left(P_{\pi_0},T\right)\right)$.

\paragraph{Modified Pearson's $\chi^2$ test}
The following test statistic is usually employed
\begin{equation}
T = \frac{1}{n}\sum_{i=1}^n t_i \cdot \frac{\left(\frac{X_i}{t_i}-p_0\right)^2}{\mu_{p_0}}
\end{equation} To avoid numerical instabilities in the application, we utilized the following modification \begin{equation}
T = \frac{1}{n}\sum_{i=1}^n t_i \cdot \left(\frac{X_i}{t_i}-p_0\right)^2\textand \psi = I\left(T \geq q_{\alpha}\left(P_{\pi_0},T\right)\right)
\end{equation}

\paragraph{Modified Likelihood Ratio test (LRT)} The test statistic is computed as follows  \begin{equation}
T = \left| \frac{1}{n}\sum_{i=1}^n X_i\cdot \log\left(\frac{\max\left(X_i/t_i,\gamma\right)}{\max\left(p_0,\gamma\right)}\right) + (t_i-X_i)\cdot \log\left(\frac{\max\left(1-X_i/t_i,\gamma\right)}{\max\left(1-p_0,\gamma\right)}\right)\right|.
\end{equation} The corresponding test is $
\psi = I\left(T \geq q_{\alpha}\left(P_{\pi_0},T\right)\right)$. The unmodified test uses $\gamma=0$, we use $\gamma =10^{-10}$.

\paragraph{Modified Maximum Likelihood Estimation (MLE) test} To avoid numerical issues during the simulation, we implement the following modified MLE optimization \begin{align}
\hat{\pi}_{\text{MLE}} \in \argmax_{\pi \in D} \sum_{i=1}^n \log\left(\max\left(\gamma\ ,\  E_{p\sim \pi}\left[\binom{t_i}{X_i}p^{X_i}(1-p)^{t_i-X_i}\right]\right)\right)\period
\end{align} The original statistic can be recovered using $\gamma=0$, we use $\gamma =10^{-10}$. The corresponding test is defined as follows
\begin{equation}
T = W_1(\hat{\pi}_{\text{MLE}},\pi_0) \textand \psi = I\left(T \geq q_{\alpha}\left(P_{\pi_0},T\right)\right)\period
\end{equation}

\paragraph{Method of Moments (MOM) test} Let $
t_{\max}=\max_{1\leq i\leq n} t_i$, and any MOM estimator
\begin{equation}
\hat{\pi}_{\text{MOM}} \in \argmin_{\pi \in \mathcal{D}} \sum_{j=0}^{t_{\max}}  \left|E_{p \sim \pi}[p^j] - \frac{\sum_{i=1}^n \frac{\binom{X_i}{j}}{\binom{t_i}{j}}\cdot I(j \leq t_i)}{\sum_{i=1}^n I(j \leq t_i)}\right|\period
\end{equation} The corresponding test is defined as follows \begin{equation}
T = W_1(\hat{\pi}_{\text{MOM}},\pi_0) \textand \psi = I\left(T \geq q_{\alpha}\left(P_{\pi_0},T\right)\right)\period
\end{equation}

\subsection{Homogeneity testing without a reference effect}\label{appx:homogeneity_testing_composite_null}

In the following definitions, let $\gamma = 10^{-10}$.

\paragraph{Modified Cochran's $\chi^2$ test} The $\chi^2$ statistic is defined as follows \begin{equation}
\hat{\chi}^2(X)= \frac{\tilde{V}(X)}{\max\left(\mu_{\hat{m}_1(X)}\ ,\ \gamma\right)}  \where \tilde{V}(X) = \frac{1}{n-1}\sum_{i=1}^nt_i\cdot \left(\frac{X_i}{t_i}-\hat{m}_1(X)\right)^2
\end{equation} and \begin{equation}\label{eq:m1_diff_t}
\hat{m}_1(X) = \frac{\sum_{i=1}^n X_i}{\sum_{i=1}^n t_i}.
\end{equation}

We define the modified Cochran's $\chi^2$ test as \begin{equation}
\psi_{\chi^2}(X) = I\left(\hat{\chi}^2(X) \geq \sup_{\pi \in S}q_{\alpha}(P_{\pi},\hat{\chi}^2) \right) \period
\end{equation}

We define the asymptotic modified Cochran's $\chi^2$ test as \begin{equation}
\psi_{\chi^2}(X) = I\left(\hat{\chi}^2(X) \geq \frac{\chi^2_{n-1,\alpha}}{n-1}\right) \period
\end{equation}

\paragraph{Debiased Cochran's $\chi^2$ test (V.1)} The $R_1$ statistic is defined as \begin{equation}
R_1(X) =  \frac{\hat{V}(X)}{\max\left(\mu_{\hat{m}_1(X)},\gamma\right)}
\end{equation} where \begin{equation}\label{eq:v_hat_diff_t}
\hat{V}(X) = \binom{n}{2}^{-1}\sum_{i<j}\frac{h(X_i,X_j)}{2} \st h(X_i,X_j)=\frac{\binom{X_i}{2}}{\binom{t_i}{2}}+\frac{\binom{Y_j}{2}}{\binom{t_j}{2}}-2\frac{\binom{X_i}{1}}{\binom{t_i}{1}}\frac{\binom{X_j}{1}}{\binom{t_j}{1}}
\end{equation} and $\hat{m}_1(X)$ is defined in \eqref{eq:m1_diff_t}. The corresponding test is \begin{equation}
\psi_{R_1}(X) = I\left( R_1(X) \geq \sup_{\pi \in S}q_{\alpha}(P_{\pi},R_1)\right) \period
\end{equation}

\paragraph{Debiased Cochran's $\chi^2$ test (V.2)} The $R_2$ statistic is \begin{equation}
R_2(X) =  \frac{\hat{V}(X)}{\max\left(\hat{\mu}(X),\gamma\right)}
\end{equation} where \begin{equation}
\hat{\mu}(X)= \left(\binom{n}{2}\right)^{-1}\sum_{i<j}\frac{\tilde{h}(X_i,x_j)}{2} \st \tilde{h}(X_i,X_j)= \frac{X_i}{t_i}+\frac{X_j}{t_j}-2\frac{X_i}{t_i}\frac{X_j}{t_j}
\end{equation} and $\hat{V}(X)$ is defined in \eqref{eq:v_hat_diff_t}. The corresponding test is \begin{equation}
\psi_{R_2}(X) = I\left( R_2(X) \geq \sup_{\pi \in S}q_{\alpha}(P_{\pi},R_2)\right) \period
\end{equation}

\etocdepthtag.toc{mtreferences}
\addcontentsline{toc}{section}{References}
\bibliography{TestingRandomEffects}

\end{document}